\theoremstyle{remark} 
\theoremstyle{plain}% default
\newtheorem{lm}{Lemma}[section]
\newtheorem{prop}[lm]{Proposition}
\newtheorem{theorem}[lm]{Theorem}
\newtheorem{coro}[lm]{Corollary}
\theoremstyle{definition}
\newtheorem{rem}[lm]{Remark}
\newtheorem{defi}[lm]{Definition}
\newtheorem{exa}[lm]{Example}
\newtheorem*{ack}{Acknowledgments}
\numberwithin{equation}{section}
\title[Schr{\"o}dinger operator with confining potential]{A Schr{\"o}dinger operator\\
with confining potential having quadratic growth}
\keywords{Schr{\"o}dinger operator, decay estimates, confining potentials.}
\subjclass[2010]{35A15, 47A10, 47D08, 47N50, 49J35, 49J45, 49R05}
\author[Alessi]{Chiara Alessi}
\address[C.\ Alessi]{Dipartimento di Matematica e Informatica
	\newline\indent
	Universit\`a degli Studi di Ferrara
	\newline\indent
	Via Machiavelli 30, 44121 Ferrara, Italy}
\email{chiara.alessi@unife.it}
\author[Brasco]{Lorenzo Brasco}
\address[L.\ Brasco]{Dipartimento di Matematica e Informatica
	\newline\indent
	Universit\`a degli Studi di Ferrara
	\newline\indent
	Via Machiavelli 30, 44121 Ferrara, Italy}
\email{lorenzo.brasco@unife.it}
\author[Miranda]{Michele Miranda, jr.}
\address[M.\ Miranda]{Dipartimento di Matematica e Informatica
	\newline\indent
	Universit\`a degli Studi di Ferrara
	\newline\indent
	Via Machiavelli 30, 44121 Ferrara, Italy}
\email{michele.miranda@unife.it}
\begin{document}

\begin{abstract}
We study the spectral properties of a Schr\"odinger operator, in presence of a confining potential given by the distance squared from a fixed compact potential well. We prove continuity estimates on both the eigenvalues and the eigenstates, lower bounds on the ground state energy, regularity and integrability properties of eigenstates. We also get explicit decay estimates at infinity, by means of elementary nonlinear methods.
\end{abstract}

\maketitle

\begin{center}
\begin{minipage}{10cm}
\small
\tableofcontents
\end{minipage}
\end{center}

\section{Introduction}
The study reported in this paper was motivated by on-going studies of atomic Bose-Einstein condensates, namely gases of atoms cooled to nano-Kelvin temperatures where quantum mechanics dominates their motion and new states of matter occur with, e.g., superfluid behaviours~\cite{LSSY, pitaevskii2016bose}.
Of special interest in view of matter-wave optics applications are the so-called {\it atomic waveguide configurations},
where atoms are subject to an external potential of magnetic and/or optical origin that confines them in the two transverse directions, while leaving them free to move along the third, axial direction~\cite{Atomtronics}.
Steps in the theoretical description of this configurations were reported in the pioneering paper~\cite{LP}, whose physical insight triggered the need for a rigorous mathematical formulation of the problem.
\par
In specific, the general problem that one aims at considering is the
following nonlinear Schr{\"o}dinger equation
\begin{equation}
\label{EqLebPav}
i\,\partial_t \Psi(t,x)=- \Delta \Psi (t,x)+ \omega^2\,V_\Sigma(x)\,\Psi(t,x) +
a\, |\Psi(t,x)|^2\, \Psi(t,x), \qquad \text{for}\ t>0, x\in \mathbb{R}^N,
\end{equation}
with $V_\Sigma$ a confining potential which may be thought
as the square of the distance from a non-empty compact 
set $\Sigma\subseteq\mathbb{R}^N$ and $\omega\in\mathbb{R}\setminus\{0\}$, $a\in \mathbb{R}$ are given constants.
In the cited paper~\cite{LP} the set $\Sigma$ is a one-dimensional curve with a small curvature, but more configurations with sharp bends are also of experimental interest, as well as configurations where $\Sigma$ can be a two-dimensional surface~\cite{garraway2016recent}.
The general objective of our mathematical enterprise is to show how the geometry of $\Sigma$ influences
the solution $\Psi$, at least close to $\Sigma$.
\vskip.2cm\noindent
In this paper we start with the simpler case of the stationary version of \eqref{EqLebPav}, in absence of the nonlinearity (i.e. we take $a=0$). In other words, we will consider
the elliptic operator
\begin{equation}
\label{operatore}
\mathcal{H}_{\omega,\Sigma}[\Psi]:=-\Delta \Psi +\omega^2\,V_\Sigma\, \Psi,
\end{equation}
where $V_\Sigma$ has the following peculiar form
\[
V_\Sigma(x)=\Big(\mathrm{dist}(x,\Sigma)\Big)^2,\qquad \text{for every}\ x\in\mathbb{R}^N,
\]
and $\Sigma\subseteq\mathbb{R}^N$ is a compact non-empty set. In particular, $V_\Sigma$ is a confining potential, in the sense that
\begin{equation}
\label{confining}
\lim_{|x|\to+\infty} V_\Sigma(x)=+\infty.
\end{equation}
We point out
that $\mathcal{H}_{\omega,\Sigma}$ is self-adjoint and non--negative in $L^2(\mathbb{R}^N)$, 
so for its spectrum we have that $\sigma(\mathcal{H}_{\omega,\Sigma})\subseteq [0,+\infty)$.
\par
For every such a set $\Sigma$, also called {\it potential well}, we will associate the following geometric parameters
\begin{equation}
\label{deltaR}
\delta_\Sigma=\min_{x\in\Sigma} |x|\qquad \text{and}\qquad R_\Sigma=\max_{x\in\Sigma} |x|.
\end{equation}
If no confusion is possible, we shall write $V=V_\Sigma$, $\delta=\delta_\Sigma$ and $R=R_\Sigma$ for 
simplicity. 
We observe that by definition we have that $\Sigma\subseteq \overline{B_{R}(0)}$. This fact implies that
we have the estimates
\begin{equation}
\label{uppEstVSigma}
V(x)\leq |x-\overline{x}|^2, \qquad \text{for every}\ x\in \mathbb{R}^N\ \text{and}\ \overline{x}\in\Sigma,
\end{equation}
and
\begin{equation}
\label{lowEstVSigma}
V(x)\geq (|x|-R)_+^2,\qquad \text{for every}\ x\in \mathbb{R}^N.
\end{equation}
We denoted by $(\,\cdot\,)_+$ the positive part.
\par
The case $R=0$ is peculiar: indeed, in this case we have $\Sigma=\{0\}$. Accordingly, the confining potential is given by $V(x)=\omega^2\,|x|^2$ and our Schr\"odinger operator boils down to the classical {\it quantum harmonic oscillator} (see for example \cite[Example 4.2.1]{FLW} or \cite[Chapter 8, Section 3]{Tes} for the spectrum of this operator).
\begin{exa}
Apart for the case $\Sigma=\{0\}$, we will be interested 
in considering
\begin{equation}\label{SR}
\Sigma=S_R=\left\{ x=(x_1,x_2,x_3)\in \mathbb{R}^3\,:\, x_1^2+x_2^2=R^2, 
x_3=0\right\}, \qquad R>0,
\end{equation}
for which we have $\delta_\Sigma=R_\Sigma=R$,
or more generally the torus
\begin{equation}\label{TorusRr}
\Sigma=T_{r,R}=\left\{ x=(x_1,x_2,x_3)\in \mathbb{R}^3\,:\,
\left(\sqrt{x_1^2+x_2^2}-R+r\right)^2+x_3^2=r^2\right\}, 
\qquad 0<2r<R.
\end{equation}
Observe that the latter reduces to the former, when $r=0$.
\end{exa}
\begin{rem}[Reduction to the case $\omega=1$]
For simplicity, in this paper we will always normalize the physical constant $\omega\not=0$ to be $1$. Accordingly, we will write 
\[
\mathcal{H}_{\Sigma}[\Psi]:=\mathcal{H}_{1,\Sigma}[\Psi]=-\Delta \Psi +V_\Sigma\, \Psi.
\]
Actually, this is not restrictive, since one can always reduce to this case by a scale change. Indeed, observe at first that for every $t>0$ and $\Sigma\subseteq\mathbb{R}^N$ non-empty compact set, we have
\[
\mathrm{dist}(x,\Sigma)=\frac{1}{t}\,\mathrm{dist}(t\,x,t\,\Sigma),\qquad \text{for every}\ x\in\mathbb{R}^N.
\]
Thus, if for a smooth function $\Psi$ we set $\Psi_t(x)=\Psi(t\,x)$, we get
\[
\begin{split}
\mathcal{H}_{\omega,\Sigma}[\Psi_t](x)&=t^2\,\left[-\Delta \Psi(t\,x)+\frac{\omega^2}{t^2}\,\Big(\mathrm{dist}(x,\Sigma)\Big)^2\,\Psi(t\,x)\right]\\
&=t^2\,\left[-\Delta \Psi(t\,x)+\frac{\omega^2}{t^4}\,\Big(\mathrm{dist}(t\,x,t\,\Sigma)\Big)^2\,\Psi(t\,x)\right]\\
&=t^2\,\left[-\Delta \Psi(t\,x)+\frac{\omega^2}{t^4}\,V_{t\,\Sigma}(t\,x)\,\Psi(t\,x)\right]=t^2\,\mathcal{H}_{\frac{\omega}{t^2},t\,\Sigma}[\Psi](t\,x).
\end{split}
\]
By making the choice $t=\sqrt{\omega}$ for the scale parameter, we get that 
\[
\Psi\ \text{is an eigenstate of}\ \mathcal{H}_{1,\sqrt{\omega}\,\Sigma}\qquad \Longleftrightarrow\qquad \Psi_{\sqrt{\omega}}   \ \text{is an eigenstate of}\ \mathcal{H}_{\omega,\Sigma}.
\]
Accordingly, we also get the following relation between the eigenvalues
\[
\lambda_k(\omega^2\,V_\Sigma)=\omega\,\lambda_k(V_{\sqrt{\omega}\,\Sigma}).
\]
\end{rem}
\begin{rem}
We point out the recent paper \cite{Fra}, which considers a quite general class of confining potentials, comprising our class. In \cite{Fra}, a sharp lower bound on the ground state energy $\lambda_1(V)$ is given, together with a stability estimate.
\end{rem}
We now wish to briefly summarize the main results of the paper:
\begin{itemize}
\item we give at first an explicit lower bound on the ground state energy, in terms of $N$ and $R$ only, see Proposition \ref{prop:poincare};
\vskip.2cm
\item we give some stability estimates for the spectrum, by proving two-sided bounds on the difference between $\lambda_k(V_1)-\lambda_k(V_2)$. Here $V_1=V_{\Sigma_1}$ and $V_2=V_{\Sigma_2}$ are two different potentials belonging to our class. We show, through an explicit estimate, that this difference can controlled by the {\it Hausdorff distance} between the potential wells $\Sigma_1$ and $\Sigma_2$ (see Proposition \ref{prop:hausdorff});
\vskip.2cm
\item we prove some integrability estimates on the eigenstates, namely: a global $L^\infty$ estimate (Proposition \ref{prop:moser1}); weighted integrability against polynomial weights with arbitrary order of growth (Proposition \ref{prop:perognik}). We also prove higher regularity of eigenstates, up to the optimal threshold (Proposition \ref{prop:classical} and Remark \ref{oss:maxreg});
\vskip.2cm
\item in Theorem \ref{teo:expoLinfty} we prove the exponential decay of the eigenstates at infinity in the uniform norm, with an explicit a priori estimate;
\vskip.2cm
\item finally, in Theorem \ref{thm:stability} and Corollary \ref{coro:stability} we discuss what happens when $R$ is close to $0$, i.e. when the potential well $\Sigma$ approaches the origin and consequently our operator should collapse on the classical quantum harmonic oscillator. We show that this is actually the case, by giving an explicit estimate of the distance between the spectra and the eigenspaces of the two relevant operators, only in terms of the dimension $N$, the order $n$ of the eigenvalue and the geometric parameter $R$. 
\end{itemize}

\subsection{Plan of the paper}
In Section \ref{sec:2} we introduce the main notation and the basic properties of our class of potentials. The basic Spectral Theory of our operator is recalled in Section \ref{sec:3}. In Section \ref{sec:4} we show that the spectrum depends continuously on the potential well $\Sigma$ (with respect to the Hausdorff distance). In other words, if $\Sigma_1$ and $\Sigma_2$ are close in the Hausdorff topology, then the spectra must be close, as well. We then start to give some regularity estimates on the eigenstates: we prove at first some integrability estimates in Section \ref{sec:5} and then discuss higher regularity in Section \ref{sec:6}.  
In Section \ref{sec:7} we prove the exponential 
decay of eigenstates, while in Section \ref{sec:8} we give a quantitative estimate on the distance of eigenspaces from those of the quantum harmonic oscillator, in terms of the geometric parameter $R$. An appendix, containing a uniform lower bound of the ground state energy in a particular situation, concludes the paper.
%Finally in Section~\ref{SecStrich} we show how
%to get Stricharz estimates when the potential $V_\Sigma$ is
%slightly modified to coincide with $|x|^2$ at infinity. 

\begin{ack}
The authors wish to thank Paolo Baroni and Iacopo Carusotto for some useful discussions. 
C.\,A. has been financially supported by the joint Ph.D. program of the Universities of Ferrara, Modena \& Reggio Emilia and Parma. L.\,B. has been financially supported by the {\it Fondo di Ateneo per la Ricerca} {\sc FAR 2022} and the {\it Fondo per l'Incentivazione alla Ricerca Dipartimentale} {\sc FIRD 2022} of the University of Ferrara. 
\end{ack}

\section{Preliminaries}
\label{sec:2}
\subsection{Notation and basics}
We will use the symbol $\mathbb{N}^*=\mathbb{N}\setminus\{0\}$.
In what follows, for $\mathbf{h}\in\mathbb{R}^N$ and $\varphi\in L^1_{\rm loc}(\mathbb{R}^N)$, we will use the following notation
\[
\tau_\mathbf{h}\varphi(x)=\varphi(x+\mathbf{h}),\qquad \delta_\mathbf{h} \varphi(x)=\tau_\mathbf{h}\varphi(x)-\varphi(x).
\] 
For a non-negative continuous function $\mu$ defined on $\mathbb{R}^N$, we will denote 
\[
L^2(\mathbb{R}^N;\mu)=\left\{ \phi:\mathbb{R}^N\to\mathbb{R}\ \text{measurable} \,: \,  
\int_{\mathbb{R}^N} \mu\,|\phi|^2 dx<+\infty\right\}.
\]
Here measurability will always be intended with respect to the $N-$dimensional Lebesgue measure.
%In the following we shall denote $V$ meaning $V_\Sigma$ and when needed $V_R$ when $\Sigma$ is $S_R$ defined in \eqref{SR}. 
In the sequel, we will need the following celebrated functional inequalities. They hold for functions in $C^\infty_c(\mathbb{R}^N)$ and thus, by density, for the whole space $H^1(\mathbb{R}^N)$, as well:
\begin{itemize}
\item for $N\ge 3$ the {\it Sobolev inequality}
\begin{equation}
\label{sobolev}
\mathcal{T}_N\,\left(\int_{\mathbb{R}^N}|\phi|^{2^*}\,dx\right)^\frac{2}{2^*}\le \int_{\mathbb{R}^N}|\nabla\phi|^2\,dx;
\end{equation}
where 
\[
2^*=\frac{2\,N}{N-2}\qquad \text{and}\qquad\mathcal{T}_N=N\,(N-2)\,\pi\,\left(\frac{\Gamma(N/2)}{\Gamma(N)}\right)^\frac{2}{N},
\]
see for example \cite[Chapter 2, Section 3.5]{Maz};
\vskip.2cm
\item for $N=2$ the {\it Ladyzhenskaya inequality}
\begin{equation}
\label{lady}
\mathcal{L}_q\,\int_{\mathbb{R}^2} |\phi|^q\,dx\le \left(\int_{\mathbb{R}^2} |\nabla \phi|^2\,dx\right)^{\frac{q-2}{2}}\,\left(\int_{\mathbb{R}^2} |\phi|^2\,dx\right),
\end{equation}
for every $2<q<\infty$.
For the special case $q=4$, which is actually the original Ladyzhenskaya inequality (see \cite{Lad}), we know that we can take $\mathcal{L}_4=\pi$ (see \cite[equation (1.11)]{Le});
\vskip.2cm
\item finally, for $N=1$ the {\it Morrey inequality}
\begin{equation}
\label{morrey}
\frac{1}{2}\,\|\phi\|_{L^\infty(\mathbb{R})}^2\le \left(\int_{\mathbb{R}}|\phi'|^2\,dx\right)^\frac{1}{2}\,\left(\int_{\mathbb{R}}|\phi|^2\,dx\right)^\frac{1}{2}.
\end{equation}
This can be easily obtained from the fundamental theorem of calculus.
\end{itemize}

\subsection{Properties of the potentials}
We start with the following two technical lemmas of general character.

\begin{lm}
\label{lm:negative}
The potential $V$ is a locally Lipschitz function such that we have 
\begin{equation}
\label{lowerbound}
|x|^2\,\left(\frac{\sigma-1}{\sigma}\right)^2\le V(x),\qquad \mbox{for every}\ \sigma>1,\ x\in 
\mathbb{R}^N\setminus B_{\sigma R}(0),
\end{equation}
and 
\begin{equation}
\label{upperbound}
V(x)\le 4\,|x|^2,\qquad \mbox{ for every } x\in 
\mathbb{R}^N\setminus B_{\delta}(0),
\end{equation}
where $\delta,R$ are defined in \eqref{deltaR}.
In particular, we have
\begin{equation}
\label{potenziale}
\frac{1}{V}\in L^p(\mathbb{R}^N\setminus B_{\sigma R}(0)),\qquad \mbox{for every}\ \sigma>1\ \text{and}\ p>\frac{N}{2}. 
\end{equation}
\end{lm}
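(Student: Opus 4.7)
The plan is to prove each of the four claims essentially by combining the two elementary bounds \eqref{uppEstVSigma} and \eqref{lowEstVSigma} already recorded in the introduction, together with the Lipschitz property of the distance function.

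First, for local Lipschitz regularity, I would note that $x\mapsto \mathrm{dist}(x,\Sigma)$ is globally $1$-Lipschitz on $\mathbb{R}^N$, hence bounded on every compact set; since $V=\mathrm{dist}(\cdot,\Sigma)^2$ is the square of this function and squaring is locally Lipschitz on bounded intervals of $\mathbb{R}$, the composition $V$ is locally Lipschitz.

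For \eqref{lowerbound}, I would start from \eqref{lowEstVSigma}, namely $V(x)\ge (|x|-R)_+^2$. If $x\in\mathbb{R}^N\setminus B_{\sigma R}(0)$ for some $\sigma>1$, then $|x|\ge \sigma R\ge R$, so the positive part may be dropped and
\[
V(x)\ge (|x|-R)^2 = |x|^2\left(1-\frac{R}{|x|}\right)^2 \ge |x|^2\left(1-\frac{1}{\sigma}\right)^2 = |x|^2\left(\frac{\sigma-1}{\sigma}\right)^2,
\]
where the middle inequality uses $|x|\ge \sigma R$, i.e.\ $R/|x|\le 1/\sigma$.

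For \eqref{upperbound}, I would use \eqref{uppEstVSigma} and pick $\overline{x}\in\Sigma$ realizing the minimum in \eqref{deltaR}, so $|\overline{x}|=\delta$. Then, for $|x|\ge\delta$,
\[
V(x)\le |x-\overline{x}|^2\le \bigl(|x|+\delta\bigr)^2\le (2|x|)^2 = 4|x|^2.
\]
Finally, for \eqref{potenziale}, the lower bound \eqref{lowerbound} just proved gives $V(x)^{-1}\le (\sigma/(\sigma-1))^2\,|x|^{-2}$ on $\mathbb{R}^N\setminus B_{\sigma R}(0)$, so integrability of $V^{-p}$ there reduces to integrability of $|x|^{-2p}$ on the complement of a ball, which is finite precisely when $2p>N$. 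No step is a real obstacle; the only thing to be careful about is to keep the thresholds $\sigma R$ vs.\ $\delta$ consistent with where each inequality applies.
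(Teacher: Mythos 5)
Your proof is correct and follows essentially the same route as the paper's: the lower bound from \eqref{lowEstVSigma} with $|x|\ge\sigma R$, the upper bound from \eqref{uppEstVSigma} with a point $\overline{x}\in\Sigma$ of norm $\delta$, and integrability of $1/V$ reduced to that of $1/|x|^2$. The only addition is your brief justification of local Lipschitz regularity, which the paper simply asserts.
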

\begin{proof}
From \eqref{uppEstVSigma}, by arbitrariness of $\overline{x}\in\Sigma$ and recalling the definition of $\delta$,
for every $x\in \mathbb{R}^N\setminus B_{\delta}(0)$ we have
\[
V(x)\le (|x|+\delta)^2\le 4\,|x|^2,
\]
and the upper bound \eqref{upperbound} follows.
From \eqref{lowEstVSigma}, for every $\sigma>1$ and every $x\in \mathbb{R}^N\setminus B_{\sigma R}(0)$ we have
\[
V(x)\ge (|x|-R)^2\ge \left(|x|-\frac{|x|}{\sigma}\right)^2.
\]
This gives the lower bound, as well.
The claimed integrability of $1/V$ now easily follows from that of $1/|x|^2$.
\end{proof}

\begin{rem}
For later use, we also record another couple of pointwise estimates on $V$.
By \eqref{uppEstVSigma} and Young's inequality, we get in particular 
\[
V(x) \leq (1+\varepsilon)\,|x|^2 +\left(1+\frac{1}{\varepsilon}\right)\,|\overline{x}|^2,\qquad \text{for every}\ x\in\mathbb{R}^N \ \text{and}\ \overline{x}\in \Sigma.
\]
By minimizing with respect to $\overline{x}\in\Sigma$, this yields
\begin{equation}
\label{eqn1}
V(x) \leq (1+\varepsilon)\,|x|^2 +\left(1+\frac{1}{\varepsilon}\right)\,\delta^2,\qquad \text{for every}\ x\in\mathbb{R}^N.
\end{equation}
Moreover, for every $\varepsilon>0$
\[
|x|^2\le (1+\varepsilon)\,(|x|-R)_+^2+\left(1+\frac{1}{\varepsilon}\right)\,R^2.
\]
By using \eqref{lowEstVSigma}, we thus obtain
\begin{equation}
\label{Vepsilon}
|x|^2\le (1+\varepsilon)\,V(x)+\left(1+\frac{1}{\varepsilon}\right)\,R^2,\qquad \text{for every}\ x\in\mathbb{R}^N,\ \varepsilon>0.
\end{equation}
\end{rem}

\begin{lm}
\label{lm:intermediate}
Let $\phi\in L^2_{\rm loc}(\mathbb{R}^N)\cap L^2(\mathbb{R}^N;V)$. Then
\[
\phi\in L^q(\mathbb{R}^N),\qquad \text{for every}\ \frac{2\,N}{N+2}<q< 2.
\]
\end{lm}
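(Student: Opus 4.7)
The plan is to split the integral of $|\phi|^q$ into a contribution on a large ball, where the local $L^2$ assumption takes over, and a contribution at infinity, where the weight $V$ behaves like $|x|^2$ and we can trade integrability against the polynomial tail of $1/V$. Fix any $\sigma>1$ and write
\[
\int_{\mathbb{R}^N}|\phi|^q\,dx=\int_{B_{\sigma R}(0)}|\phi|^q\,dx+\int_{\mathbb{R}^N\setminus B_{\sigma R}(0)}|\phi|^q\,dx.
\]

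For the first piece, since $q<2$ and the ball has finite measure, a direct application of H\"older's inequality with exponents $2/q$ and $2/(2-q)$ gives
\[
\int_{B_{\sigma R}(0)}|\phi|^q\,dx\le |B_{\sigma R}(0)|^{\frac{2-q}{2}}\,\left(\int_{B_{\sigma R}(0)}|\phi|^2\,dx\right)^{\frac{q}{2}},
\]
which is finite by the assumption $\phi\in L^2_{\rm loc}(\mathbb{R}^N)$.

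For the piece at infinity, the trick is to insert the weight $V^{q/2}\,V^{-q/2}$ and apply H\"older with the same pair of exponents, namely
\[
\int_{\mathbb{R}^N\setminus B_{\sigma R}(0)}|\phi|^q\,dx\le \left(\int_{\mathbb{R}^N\setminus B_{\sigma R}(0)}V\,|\phi|^2\,dx\right)^{\frac{q}{2}}\,\left(\int_{\mathbb{R}^N\setminus B_{\sigma R}(0)}V^{-\frac{q}{2-q}}\,dx\right)^{\frac{2-q}{2}}.
\]
The first factor is finite by hypothesis. For the second factor, observe that $\tfrac{q}{2-q}>\tfrac{N}{2}$ is equivalent to $q>\tfrac{2N}{N+2}$, so \eqref{potenziale} from Lemma \ref{lm:negative} guarantees that $V^{-q/(2-q)}$ is integrable on $\mathbb{R}^N\setminus B_{\sigma R}(0)$. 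Summing the two pieces concludes the proof.

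There is no real obstacle here: the only thing to verify is the exponent condition $\tfrac{q}{2-q}>\tfrac{N}{2}$, which is exactly the endpoint $q>\tfrac{2N}{N+2}$ appearing in the statement. The upper restriction $q<2$ is needed simply to make H\"older's inequality applicable with the finite exponent $\tfrac{2}{2-q}$ on the weight factor.
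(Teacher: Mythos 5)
Your proof is correct and follows exactly the same strategy as the paper's: split at the sphere of radius $\sigma R$, apply H\"older's inequality with exponents $2/q$ and $2/(2-q)$ on each piece, and invoke \eqref{potenziale} from Lemma \ref{lm:negative} together with the equivalence $q>\tfrac{2N}{N+2}\Leftrightarrow \tfrac{q}{2-q}>\tfrac{N}{2}$. No differences worth noting.
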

\begin{proof}
For every $\sigma>1$ and every $2N/(N+2)<q<2$ we have by H\"older's inequality
\[
\int_{B_{\sigma R}(0)} |\phi|^q\,dx\le 
|B_{\sigma R}(0)|^{1-\frac{q}{2}}\,
\left(\int_{B_{\sigma R}(0)} |\phi|^2\,dx\right)^\frac{q}{2}.
\]
On the other hand, again by H\"older's inequality we get
\[
\int_{\mathbb{R}^N \setminus B_{\sigma R}(0)} |\phi|^q\,dx\le
\left( 
\int_{\mathbb{R}^N\setminus B_{\sigma R}(0)} V\,|\phi|^2\,dx
\right)^\frac{q}{2}\,\left(\int_{\mathbb{R}^N\setminus B_{\sigma R}(0)}
\frac{1}{V^\frac{q}{2-q}}\,dx\right)^\frac{2-q}{2}.
\]
We now observe that 
\[
q>\frac{2\,N}{N+2} \qquad \Longleftrightarrow \qquad \frac{q}{2-q}>\frac{N}{2}.
\]
By Lemma \ref{lm:negative}, the last integral is finite.
\end{proof}
The weighted $L^2$ space does not depend on the particular potential $V$, under our assumptions. More precisely, we have the following
\begin{lm}
\label{lm:V}
We have 
\[
L^2_{\rm loc}(\mathbb{R}^N)\cap L^2(\mathbb{R}^N;V)=L^2_{\rm loc}(\mathbb{R}^N)\cap L^2(\mathbb{R}^N;|x|^2).
%\left\{\phi\in L^2_{\rm loc}(\mathbb{R}^N)\,:\, \int_{\mathbb{R}^N} |x|^2\, |\phi|^2\, dx<+\infty \right\}.
\]
Moreover, we have:
\begin{itemize}
\item for every $\phi\in L^2_{\rm loc}(\mathbb{R}^N)\cap L^2(\mathbb{R}^N;V)$ and every $\sigma>1$ 
\begin{equation}
\label{V1}
\int_{\mathbb{R}^N} |x|^2\,|\phi|^2\,dx\le \sigma^2\,R^2\,\int_{B_{\sigma R}(0)} |\phi|^2\,dx+
\left(\frac{\sigma}{\sigma-1}\right)^2\,
\int_{\mathbb{R}^N\setminus B_{\sigma R}(0)}V\,|\phi|^2\,dx;
\end{equation}
\item for every $\phi\in L^2_{\rm loc}(\mathbb{R}^N)\cap L^2(\mathbb{R}^N;|x|^2)$
\[
\int_{\mathbb{R}^N} V\,|\phi|^2\,dx\le \left(\max_{B_{\delta(0)}} V\right)\,\int_{B_{\delta}(0)} |\phi|^2\,dx+
4\,
\int_{\mathbb{R}^N\setminus B_{\delta}(0)}|x|^2\,|\phi|^2\,dx.
\]
\end{itemize}
\end{lm}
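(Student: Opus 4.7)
The plan is to establish the two explicit inequalities first and then deduce the equality of weighted $L^2$ spaces as an immediate corollary. Both inequalities follow from splitting the integral over $\mathbb{R}^N$ into an inner ball where $|\phi|^2$ is controlled by the local integrability assumption, and an outer region where the pointwise comparisons of Lemma \ref{lm:negative} between $V(x)$ and $|x|^2$ are available.

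To prove \eqref{V1}, I split at radius $\sigma R$. On $B_{\sigma R}(0)$ I use the trivial bound $|x|^2\le \sigma^2 R^2$, producing the first term on the right-hand side. On the complement $\mathbb{R}^N\setminus B_{\sigma R}(0)$, I invoke \eqref{lowerbound}, which rearranges to
\[
|x|^2\le \left(\frac{\sigma}{\sigma-1}\right)^2 V(x),
\]
so that multiplying by $|\phi|^2$ and integrating yields the second term. Summing the two contributions gives \eqref{V1}.

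The second inequality is symmetric, with the role of $V$ and $|x|^2$ interchanged; here I split at radius $\delta=\delta_\Sigma$. On $B_\delta(0)$, since $V$ is continuous (indeed locally Lipschitz by Lemma \ref{lm:negative}), the maximum $\max_{\overline{B_\delta(0)}} V$ is finite and provides a uniform bound, so that $\int_{B_\delta} V|\phi|^2\,dx \le (\max_{B_\delta} V)\int_{B_\delta}|\phi|^2\,dx$. On $\mathbb{R}^N\setminus B_\delta(0)$, \eqref{upperbound} gives $V(x)\le 4|x|^2$ directly, contributing the second term. Adding these produces the claimed estimate.

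Finally, the equality of spaces follows at once from the two bounds. If $\phi\in L^2_{\rm loc}(\mathbb{R}^N)\cap L^2(\mathbb{R}^N;V)$, then the right-hand side of \eqref{V1} is finite (the first term because $\phi\in L^2_{\rm loc}$, the second by assumption), so $\phi\in L^2(\mathbb{R}^N;|x|^2)$. Conversely, if $\phi\in L^2_{\rm loc}(\mathbb{R}^N)\cap L^2(\mathbb{R}^N;|x|^2)$, the second displayed inequality gives $\phi\in L^2(\mathbb{R}^N;V)$, since $\max_{\overline{B_\delta(0)}}V<\infty$ and the local $L^2$ hypothesis controls the inner integral. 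No genuine obstacle arises: the only delicate point is remembering that local Lipschitz continuity of $V$ (given by Lemma \ref{lm:negative}) ensures the constant $\max_{\overline{B_\delta(0)}} V$ is finite, which is what makes the converse direction quantitative.
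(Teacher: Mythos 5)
Your proof is correct and follows exactly the route the paper takes: split the integral at radius $\sigma R$ (resp.\ $\delta$), bound $|x|^2$ trivially on the inner ball and by the rearranged form of \eqref{lowerbound} on the exterior (resp.\ bound $V$ by its maximum on $\overline{B_\delta(0)}$ inside and by \eqref{upperbound} outside), then read off the equality of the weighted spaces from the two inequalities. The paper only states the decomposition and defers to \eqref{lowerbound}, \eqref{upperbound}; you have filled in those same details.
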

\begin{proof}
The proof is straightforward, it is sufficient to decompose the integral for every $\sigma>1$ as follows
\[
\begin{split}
\int_{\mathbb{R}^N} |x|^2\,|\phi|^2\,dx&
=\int_{B_{\sigma R}(0)} |x|^2\,|\phi|^2\,dx+
\int_{\mathbb{R}^N\setminus B_{\sigma R}(0)} |x|^2\,|\phi|^2\,dx,
\\
%&\le \sigma^2\,R^2\,\int_{B_{\sigma R}(0)} |\phi|^2\,dx+
%C_\sigma\, \int_{\mathbb{R}^N\setminus B_{\sigma R}(0)}V\,|\phi|^2\,dx,
\end{split}
\]
and then use \eqref{lowerbound}. The second estimate can be proved similarly, by using \eqref{upperbound}: we leave the details to the reader.
\end{proof}

Finally, the following simple Poincar\'e--type inequality will be useful.
\begin{prop}
\label{prop:poincare}
There exists $C_{N,R}>0$ such that for every $\phi\in C^\infty_c(\mathbb{R}^N)$ we have
\begin{equation}
\label{ImmL2HV}
\frac{1}{C_{N,R}}\,\int_{\mathbb{R}^N} |\phi|^2\, dx\leq
\int_{\mathbb{R}^N} |\nabla\phi|^2\, dx+\int_{\mathbb{R}^N} V\,
|\phi|^2\, dx.
\end{equation}
Moreover, the constant $C_{N,R}$ has the following properties
\[
\lim_{R\to 0^+} C_{N,R}\ge 1\qquad \text{and}\qquad 0<\lim_{R\to+\infty}\frac{C_{N,R}}{R^2}<+\infty.
\]
\end{prop}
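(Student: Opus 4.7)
My plan is to split $\int_{\mathbb{R}^N} |\phi|^2\,dx$ over a ball $B_r(0)$ and its complement, with $r>0$ to be chosen in terms of $R$ and a free parameter $\varepsilon>0$. On the outer piece I would simply use the pointwise bound $|\phi|^2 \le |x|^2|\phi|^2/r^2$ and then invoke \eqref{Vepsilon} to trade $|x|^2$ for $V$, obtaining
\[
\int_{\mathbb{R}^N\setminus B_r(0)}|\phi|^2\,dx \le \frac{1+\varepsilon}{r^2}\int_{\mathbb{R}^N} V|\phi|^2\,dx + \frac{(1+1/\varepsilon)\,R^2}{r^2}\int_{\mathbb{R}^N} |\phi|^2\,dx.
\]
The last term must be absorbed on the left, which is what forces $r$ to be large relative to $R$.

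For the inner piece I would use H\"older to bring in the dimension-dependent functional inequality. For $N\ge 3$ this is clean: H\"older followed by \eqref{sobolev} gives
\[
\int_{B_r(0)} |\phi|^2\,dx \le \frac{|B_1|^{2/N}}{\mathcal{T}_N}\,r^2\int_{\mathbb{R}^N}|\nabla\phi|^2\,dx.
\]
For $N=2$ I would combine H\"older ($\int_{B_r}|\phi|^2\le |B_r|^{1/2}\,(\int|\phi|^4)^{1/2}$) with \eqref{lady}, then apply Young's inequality to decouple $\int|\nabla\phi|^2$ from $\int|\phi|^2$. For $N=1$ I would use $\int_{B_r}|\phi|^2\le 2r\|\phi\|_{L^\infty}^2$ and \eqref{morrey}, again completing with Young. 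In dimensions $1$ and $2$ the splitting produces an extra $\int|\phi|^2$ term whose coefficient can be made arbitrarily small, hence absorbable on the left.

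Adding the inner and outer estimates and choosing $r$ so that the coefficients of the two unwanted $\int|\phi|^2$ terms sum to a number strictly less than $1$, I would absorb them on the left and arrive at
\[
\int_{\mathbb{R}^N}|\phi|^2\,dx \le A_{N,R}\int_{\mathbb{R}^N}|\nabla\phi|^2\,dx + B_{N,R}\int_{\mathbb{R}^N} V|\phi|^2\,dx,
\]
from which \eqref{ImmL2HV} follows with $C_{N,R}=\max(A_{N,R},B_{N,R})$. For the asymptotics, the natural choice is $\varepsilon = R^2$, which makes $r^2\simeq R^2+1$; this keeps $(1+1/\varepsilon)R^2$ and $(1+\varepsilon)/r^2$ bounded as $R\to 0^+$, so that $A_{N,R}$ and $B_{N,R}$ stay of order one (yielding $\lim_{R\to 0^+} C_{N,R}\ge 1$), while for $R\to\infty$ the dominant coefficient $A_{N,R}\sim c_N r^2\sim c_N R^2$ produces the claimed positive finite limit for $C_{N,R}/R^2$.

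The main obstacle is the joint tuning of $r$ and $\varepsilon$ so that both asymptotic regimes are captured by a single explicit formula: making $C_{N,R}$ bounded as $R\to 0$ requires $r$ to remain bounded below (ruling out $r = cR$ alone), while matching the $R^2$ growth at infinity requires $r$ to scale with $R$; the choice $r^2\simeq R^2+1$ reconciles the two. The low-dimensional cases add a technical wrinkle, since the $\int|\phi|^2$ term generated by Young's inequality must be merged with the outer one before absorption.
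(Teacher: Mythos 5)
Your proposal is correct and shares the paper's overall structure — split over a ball and its complement, treat the inner piece with H\"older plus the dimension-appropriate functional inequality (\eqref{sobolev}, \eqref{lady}, \eqref{morrey}), and tune the radius so that both asymptotic regimes in $R$ come out right. The one genuine difference is how the outer piece is handled. The paper applies the lower bound \eqref{lowerbound} directly: outside $B_{\sigma R}(0)$ one has $V(x)\ge(\sigma-1)^2R^2$, so the outer integral is immediately dominated by $(\sigma-1)^{-2}R^{-2}\int V|\phi|^2$ with no leftover $\int|\phi|^2$ term; the radius is then set to $\sigma R=R+1$ by taking $\sigma=1+1/R$. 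You instead write $|\phi|^2\le r^{-2}|x|^2|\phi|^2$ and then invoke the global Young-type bound \eqref{Vepsilon}, which produces an extra $(1+1/\varepsilon)R^2r^{-2}\int|\phi|^2$ that must be absorbed jointly with the leftover from Young's inequality in dimensions $1$ and $2$. This buys you nothing and costs you the ``joint tuning'' bookkeeping you correctly flag as the main obstacle; the paper sidesteps it entirely by using the sharper \emph{local} lower bound on $V$ rather than the \emph{global} $\varepsilon$-inequality, so only the low-dimensional Young term ever needs absorbing, and its coefficient is a fixed $1/2$. Both routes yield the same scaling $C_{N,R}\sim(R+1)^2$ and the stated limits, so your argument is sound, just slightly less economical than the paper's.
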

\begin{proof}
For every $\sigma>1$ we get from \eqref{lowerbound}
\[
\begin{split}
\int_{\mathbb{R}^N}|\phi|^2\,dx
&=\int_{B_{\sigma R(0)}}|\phi|^2\,dx
+
\int_{\mathbb{R}^N\setminus B_{\sigma R}(0)}|\phi|^2\, dx\\
&\le \int_{B_{\sigma R}(0)} |\phi|^2\,dx+\frac{1}{(\sigma-1)^2\,R^2}\, \int_{\mathbb{R}^N} V\,|\phi|^2\, dx.
\end{split}
\]
For $N\ge 3$, we can use 
H\"older and Sobolev inequalities \eqref{sobolev} to estimate the $L^2$ norm. We obtain
\begin{align*}
\int_{\mathbb{R}^N}|\phi|^2\,dx
&\leq |B_{\sigma R}(0)|^\frac{2}{N}\, \|\phi\|^2_{L^{2^*}(\mathbb{R}^N)}+\frac{1}{(\sigma-1)^2\,R^2}\, \int_{\mathbb{R}^N} V\,|\phi|^2\, dx\\
& \leq \frac{\omega_N^\frac{2}{N}\,(\sigma\,R)^2}{\mathcal{T}_N}\, \|\nabla \phi\|^2_{L^2(\mathbb{R}^N)}
+\frac{1}{(\sigma-1)^2\,R^2}\, \int_{\mathbb{R}^N} V\,|\phi|^2\, dx.
\end{align*}
We choose 
\[
\sigma=1+\frac{1}{R}\qquad \text{so that}\qquad \sigma\,R=R+1.
\]
In particular, we get
\[
\int_{\mathbb{R}^N}|\phi|^2\,dx\le \frac{\omega_N^\frac{2}{N}\,(R+1)^2}{\mathcal{T}_N}\, \|\nabla \phi\|^2_{L^2(\mathbb{R}^N)}
+\int_{\mathbb{R}^N} V\,|\phi|^2\, dx.
\]
By defining
\[
C_{N,R}:=\max\left\{\frac{\omega_N^\frac{2}{N}\,(R+1)^2}{\mathcal{T}_N},1\right\},
\]
we get the claimed estimate. 
\par
For $N=2$, we proceed similarly, by using this time \eqref{lady} with $q=4$, i.e
\begin{align*}
\int_{\mathbb{R}^2}|\phi|^2\,dx
&\le \int_{B_{\sigma R}(0)} |\phi|^2\,dx+\frac{1}{(\sigma-1)^2\,R^2}\, \int_{\mathbb{R}^2} V\,|\phi|^2\, dx\\
&\leq |B_{\sigma R}(0)|^\frac{1}{2}\, \|\phi\|^2_{L^{4}(\mathbb{R}^2)}+\frac{1}{(\sigma-1)^2\,R^2}\, \int_{\mathbb{R}^2\setminus B_{\sigma R}(0)} V\,|\phi|^2\, dx\\
& \leq \frac{\sqrt{\pi\,(\sigma\,R)^2}}{\sqrt{\pi}}\, \|\nabla \phi\|_{L^2(\mathbb{R}^2)}\,\|\phi\|_{L^2(\mathbb{R}^2)}
+\frac{1}{(\sigma-1)^2\,R^2}\, \int_{\mathbb{R}^2} V\,|\phi|^2\, dx.
\end{align*}
We can use Young's inequality on the first term, in order to absorb the $L^2$ norm, i.e.
\[
\int_{\mathbb{R}^2}|\phi|^2\,dx\le \frac{\sigma^2\,R^2}{2}\, \|\nabla \phi\|^2_{L^2(\mathbb{R}^2)}+\frac{1}{2}\,\|\phi\|^2_{L^2(\mathbb{R}^2)}
+\frac{1}{(\sigma-1)^2\,R^2}\, \int_{\mathbb{R}^2} V\,|\phi|^2\, dx,
\]
which leads again to the claimed estimate, upon choosing $\sigma$ as above.
\par The case $N=1$ can be treated similarly, by using \eqref{morrey} this time. The details are left to the reader.
\end{proof}

\begin{rem}
In light of Lemma \ref{lm:stimarozza} and Remark \ref{rem:Rinfinito} below, the behavior of the constant $C_{N,R}$ with respect to $R$ is optimal, in general. 
\end{rem}

\section{Spectral properties}
\label{sec:3}

%The first step is to find the correct Sobolev space where  the functional $F$ is well defined. 
We introduce the following
inner product for any $\phi,\psi\in H^1(\mathbb{R}^N)\cap L^2(\mathbb{R}^N;V)$
\[
\mathcal{Q}_V[\phi,\psi] :=
\int_{\mathbb{R}^N} \langle\nabla \phi,\nabla\psi\rangle\, dx+
\int_{\mathbb{R}^N} V\,\phi\, \psi\, dx.
\]
Accordingly, we also set
\[
\qquad\|\phi\|_V=\sqrt{\mathcal{Q}_V[\phi,\phi]},\qquad \text{for}\ \phi\in H^1(\mathbb{R}^N)\cap L^2(\mathbb{R}^N;V).
\]
It is not difficult to see that the latter is actually a norm.
\begin{defi}
With the notation above, we define the normed vector space
\[
H^1(\mathbb{R}^N;V):=H^1(\mathbb{R}^N)\cap L^2(\mathbb{R}^N;V),
\]
endowed with the norm $\|\cdot\|_V$.
\end{defi}
The following preliminary result will be important.
\begin{lm}
\label{lm:hilbert}
The space $H^1(\mathbb{R}^N;V)$ is a Hilbert space, having $C^\infty_c(\mathbb{R}^N)$ as a dense subspace. Moreover, $H^1(\mathbb{R}^N;V)$ is contained in $L^2(\mathbb{R}^N)$, with continuous inclusion. Finally, we have 
\[
H^1(\mathbb{R}^N;V)=H^1(\mathbb{R}^N;|x|^2),
\]
with equivalent norms.
\end{lm}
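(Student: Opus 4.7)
I would carry out the four claims in a slightly rearranged order, because the continuous embedding into $L^2(\mathbb{R}^N)$ is needed for completeness, and that embedding itself passes through density of $C^\infty_c$.

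\emph{Density.} I would first prove density of $C^\infty_c(\mathbb{R}^N)$ via a standard truncate-then-mollify procedure. Fix a radial cutoff $\eta_M \in C^\infty_c(\mathbb{R}^N)$ with $\eta_M \equiv 1$ on $B_M(0)$, support in $B_{2M}(0)$, and $|\nabla \eta_M| \le C/M$. For $\phi \in H^1(\mathbb{R}^N;V)$, the truncation $\eta_M\phi$ converges to $\phi$ in $\|\cdot\|_V$: the $L^2$ convergence of $\sqrt{V}(\eta_M\phi - \phi)$ and of $\eta_M\nabla\phi - \nabla\phi$ holds by dominated convergence (using $\sqrt V\phi \in L^2$ and $\nabla\phi \in L^2$), while the spurious term $\phi\,\nabla\eta_M$ has $L^2$-norm $O(1/M)$. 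Once $\eta_M\phi$ is compactly supported, the continuous potential $V$ is bounded on a neighbourhood of its support, so standard mollifications $\rho_\varepsilon \ast (\eta_M\phi)$ converge simultaneously in $H^1$ and in $L^2(\mathbb{R}^N;V)$.

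\emph{Inclusion and completeness.} With density in hand, apply Proposition~\ref{prop:poincare} to a $C^\infty_c$ sequence approximating $\phi$ and pass to the limit, obtaining $\|\phi\|_{L^2}^2 \le C_{N,R}\,\|\phi\|_V^2$ for every $\phi \in H^1(\mathbb{R}^N;V)$. This yields the continuous inclusion into $L^2(\mathbb{R}^N)$, and supplies completeness as follows: if $(\phi_n)$ is $\|\cdot\|_V$-Cauchy, this inequality forces it to be Cauchy in $L^2$, hence together with $\nabla\phi_n$ Cauchy in $L^2$, Cauchy in $H^1(\mathbb{R}^N)$, converging to some $\phi \in H^1(\mathbb{R}^N)$. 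Simultaneously $(\sqrt{V}\,\phi_n)$ is Cauchy in $L^2$, with some limit $\psi$. Passing to a subsequence converging pointwise a.e.\ and using continuity of $V$, I identify $\psi = \sqrt{V}\,\phi$ a.e., so $\phi \in H^1(\mathbb{R}^N;V)$ and $\phi_n \to \phi$ in $\|\cdot\|_V$.

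\emph{Equivalence with the model weight.} For the equality $H^1(\mathbb{R}^N;V) = H^1(\mathbb{R}^N;|x|^2)$ with equivalent norms, I would invoke Lemma~\ref{lm:V} directly. Every $\phi \in H^1(\mathbb{R}^N)$ is automatically in $L^2_{\rm loc}(\mathbb{R}^N)$, so both inclusions of that lemma apply (say with $\sigma = 2$). The residual $L^2$-integrals on $B_{\sigma R}(0)$ and $B_\delta(0)$ are absorbed using the $L^2$-bound just established, giving two-sided control between $\|\cdot\|_V$ and the analogous norm built from $|x|^2$.

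\emph{Main difficulty.} The only nonroutine step is density: the weight $V$ is continuous but grows without bound, so truncation and mollification must be performed in this order, ensuring that the convolution step never has to cope with the unboundedness of $V$ at infinity. The remaining assertions reduce to bookkeeping on top of the elementary estimates already collected in Lemmas~\ref{lm:negative} and~\ref{lm:V}.
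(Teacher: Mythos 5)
Your proposal is correct and follows essentially the same route as the paper: density via truncation-and-mollification (which the paper compresses to ``standard approximation techniques''), continuous inclusion from Proposition~\ref{prop:poincare} applied by density, and the identification with $H^1(\mathbb{R}^N;|x|^2)$ via Lemma~\ref{lm:V}. The only cosmetic difference is in the completeness step: you run a direct Cauchy-sequence argument, whereas the paper observes that $\|\cdot\|_V$ is equivalent to the intersection norm of the two Hilbert spaces $H^1(\mathbb{R}^N)$ and $L^2(\mathbb{R}^N;V)$ and invokes that such an intersection is itself a Hilbert space -- the same content, packaged more abstractly.
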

\begin{proof}
The density of $C^\infty_c(\mathbb{R}^N)$ follows by using standard approximation techniques. 
This in particular implies that the Poincar\'e inequality of Proposition \ref{prop:poincare} holds for functions in $H^1(\mathbb{R}^N;V)$, by density. In turn, this gives the continuity of the inclusion $H^1(\mathbb{R}^N;V)\subseteq L^2(\mathbb{R}^N)$. Thus, the norm $\|\cdot\|_V$ is equivalent to 
the norm $\|\cdot\|_{H^1(\mathbb{R}^N)}+\|\cdot\|_{L^2(\mathbb{R}^N;V)}$. By using that both $H^1(\mathbb{R}^N)$ and $L^2(\mathbb{R}^N;V)$ are Hilbert spaces, we conclude that $H^1(\mathbb{R}^N;V)$ has the same property.
\par
Finally, we have 
\[
H^1(\mathbb{R}^N;V)=H^1(\mathbb{R}^N)\cap L^2(\mathbb{R}^N;V)=H^1(\mathbb{R}^N)\cap L^2(\mathbb{R}^N;|x|^2)=H^1(\mathbb{R}^N;|x|^2),
\]
thanks to Lemma \ref{lm:V}. As for the equivalence of the norms, this is a plain consequence of the estimates of Lemma \ref{lm:V}, together with the Poincar\'e inequality of Proposition \ref{prop:poincare}.
\end{proof}
Let us now consider the self-adjoint Schr{\"o}dinger operator $\mathcal{H}_V:\mathfrak{D}(\mathcal{H}_V)\subseteq L^2(\mathbb{R}^N)\to L^2(\mathbb{R}^N)$ defined by
\[
\mathcal{H}_V[\phi]:=-\Delta \phi +V\,\phi,
\]
with domain given by\footnote{The condition on the Laplacian has to be intended in distributional sense, i.e. there exists $f\in L^2(\mathbb{R}^N)$ such that
\[
\mathcal{Q}_V[\phi,\varphi]=\int_{\mathbb{R}^N}f\, \varphi\,dx,\qquad\text{for every}\ \varphi\in C^\infty_c(\mathbb{R}^N). 
\]
In Theorem \ref{thm:domain} below, we will determine explicitly the domain $\mathfrak{D}(\mathcal{H}_V)$.}
\[
\mathfrak{D}(\mathcal{H}_V)=\Big\{\phi\in H^1(\mathbb{R}^N;V)\, :\, -\Delta \phi+V\, \phi\in L^2(\mathbb{R}^N)\Big\}.
\]
Then the quadratic form
\[
\mathcal{Q}_V[\phi,\phi]=
\int_{\mathbb{R}^N} |\nabla \phi|^2\, dx + 
\int_{\mathbb{R}^N}  V\, |\phi|^2\, dx,\qquad \text{for}\ \phi\in H^1(\mathbb{R}^N;V),
\]
is {\it associated with the operator} $\mathcal{H}_V$, in the sense that
\[
\mathfrak{D}(\mathcal{H}_V)\subseteq H^1(\mathbb{R}^N;V),
\]
and
\[
\int_{\mathbb{R}^N}\mathcal{H}_V[\phi]\,\psi\,dx=\mathcal{Q}_V[\phi,\psi],\qquad \text{for every}\ \phi\in\mathfrak{D}(\mathcal{H}_V),\, \psi\in H^1(\mathbb{R}^N;V),
\]
see \cite[Chapter 10]{BS}.
\par
In order to show that our operator has a discrete spectrum, it is now sufficient to establish the compactness of the
embedding of the form domain into $L^2(\mathbb{R}^N)$, i.e.
\[
H^1(\mathbb{R}^N;V)\hookrightarrow L^2(\mathbb{R}^N).
\]
This is the content of the next result, which proves a slightly more general assertion.
\begin{theorem}
Let $q$ be an exponent such that
\[
\left\{\begin{array}{lc}
\dfrac{2\,N}{N+2}<q<2^*,& \text{if}\ N\ge 3,\\
&\\
1<q<+\infty, &\text{if}\ N=2,\\
&\\
1\le q\le+\infty, & \text{if}\ N=1.
\end{array}
\right.
\]
Then the embedding
\[
H^1(\mathbb{R}^N;V)\hookrightarrow L^q(\mathbb{R}^N),
\]
is compact.
\end{theorem}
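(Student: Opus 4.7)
The plan is to start from an arbitrary bounded sequence $\{\phi_n\}$ in $H^1(\mathbb{R}^N;V)$ and, since this space is Hilbert by Lemma \ref{lm:hilbert}, extract a subsequence converging weakly to some $\phi$. By linearity, and by replacing $\phi_n$ with $\phi_n-\phi$, I may assume $\phi=0$. The goal is then to show strong $L^q$--convergence to $0$. I would split the work into local strong convergence on large balls and uniform smallness of the tails, the latter being the step where the confinement of $V$ is decisive.

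For the local part, on each fixed ball $B_M(0)$ the sequence is bounded in $H^1(B_M(0))$, so the classical Rellich--Kondrachov theorem—supplemented by Arzelà--Ascoli through the Morrey embedding in the $N=1$, $q=\infty$ endpoint—produces, along a subsequence, convergence to $0$ in $L^q(B_M(0))$ for every admissible $q$. A diagonal extraction over $M\in \mathbb{N}^*$ yields a single subsequence working on every ball simultaneously.

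For the tail, I would separate cases by the position of $q$ relative to $2$. If $q=2$, use \eqref{Vepsilon} with $\varepsilon=1$ to get $|x|^2\le 2V+2R^2$, so
\[
\int_{|x|>M}|\phi_n|^2\,dx \le \frac{1}{M^2}\int_{\mathbb{R}^N}|x|^2|\phi_n|^2\,dx \le \frac{C}{M^2}\,\|\phi_n\|_V^2,
\]
which is $O(1/M^2)$ uniformly in $n$, thanks also to the continuous embedding of Lemma \ref{lm:hilbert} that controls $\int|\phi_n|^2$. If $q<2$, I would rerun the Hölder trick of Lemma \ref{lm:intermediate}, but localized outside $B_M(0)$:
\[
\int_{|x|>M}|\phi_n|^q\,dx \le \left(\int_{\mathbb{R}^N}V|\phi_n|^2\,dx\right)^{q/2} \left(\int_{|x|>M}V^{-q/(2-q)}\,dx\right)^{(2-q)/2},
\]
where the second factor vanishes as $M\to\infty$ by \eqref{potenziale}, the hypothesis $q>2N/(N+2)$ being precisely what ensures $q/(2-q)>N/2$. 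If $q>2$, I would interpolate the $L^2$ tail estimate with the $L^{r_0}$ bound furnished by the appropriate global embedding of $H^1(\mathbb{R}^N;V)\hookrightarrow L^{r_0}(\mathbb{R}^N)$: Sobolev \eqref{sobolev} with $r_0=2^*$ for $N\ge 3$, Ladyzhenskaya \eqref{lady} with any finite $r_0>q$ for $N=2$, and for the $L^\infty$ endpoint in $N=1$ I would bypass interpolation and argue directly via \eqref{morrey}, using the one-dimensional pointwise bound $\sup_{|x|>M}|\phi_n|^2\le 2\,\|\phi_n'\|_{L^2(\mathbb{R})}\,\|\phi_n\|_{L^2(|x|>M)}$.

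A standard $\varepsilon/2$ splitting—first choose $M$ so large that the tail is below $\varepsilon/2$ uniformly in $n$, then use the local convergence to dominate the integral on $B_M(0)$ for $n$ large—concludes the proof. The only real obstacle I anticipate is the bookkeeping across the three dimensional regimes: the admissible range for $q$ is dictated exactly by the compatibility between the Hölder tail estimate (whose threshold is the $N/2$ integrability of $1/V$) and the global Sobolev-type embedding (whose upper threshold is $2^*$, or unbounded for $N\le 2$), so one needs to keep these two conditions dovetailed case by case.
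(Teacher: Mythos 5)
Your proof is correct, and it follows a genuinely different route from the paper's. The paper proves the pivotal case $q=2$ by verifying the three hypotheses of the Riesz--Fr\'echet--Kolmogorov criterion directly on $\mathbb{R}^N$ (equi-boundedness via Poincar\'e, equi-continuity via the translation inequality $\|\tau_{\mathbf h}\phi-\phi\|_{L^2}\le|\mathbf h|\,\|\nabla\phi\|_{L^2}$, and uniform tail decay via the lower bound \eqref{lowerbound}), and then \emph{derives} the $q>2$ range by interpolating the full $L^q$ norm between $L^2$ and $L^{2^*}$, and the $q<2$ range by rerunning the localized H\"older argument of Lemma \ref{lm:intermediate} on the already-extracted $L^2$-convergent subsequence. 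You instead start from weak compactness in the Hilbert space $H^1(\mathbb{R}^N;V)$, obtain local strong $L^q$ convergence from Rellich--Kondrachov on balls (plus Arzel\`a--Ascoli at the $N=1$, $q=\infty$ endpoint), and supply a tail estimate for each regime of $q$ separately. The two proofs exploit the same ingredients --- the $L^p$ summability of $1/V$ near infinity in the subcritical range, Sobolev/Ladyzhenskaya/Morrey for the supercritical range, and the confinement to kill the tails --- but organize them differently: the paper's scheme needs the translation structure of $\mathbb{R}^N$ and is perhaps more self-contained, while yours is more modular (each ingredient is used exactly where it is needed, and the extraction of the subsequence is done once and for all at the start). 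One small advantage of the paper's order of cases is that the $q<2$ argument inherits the strongly convergent $L^2$ subsequence for free; your version replaces this by a diagonal argument over balls, which is fine but slightly heavier bookkeeping. Both arguments close correctly.
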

\begin{proof}
We first prove the result for the pivotal case $q=2$. Then, we will show how the remaining cases can be deduced from this.
\vskip.2cm\noindent
{\it Case $q=2$}. We have to prove that bounded sets in $H^1(\mathbb{R}^N;V)$  are relatively 
compact sets in $L^2(\mathbb{R}^N)$. We just need to show that the unit ball in
$H^1(\mathbb{R}^N;V)$ 
\[
{\mathcal F}=\left\{\phi \in H^1(\mathbb{R}^N;V)\,:\, \|\phi\|_V\leq 1\right\},
\]
is relatively compact in $L^2(\mathbb{R}^N)$. We will appeal to the classical Riesz--Fr\'echet--Kolmogorov Theorem and proceeds as in the proof of \cite[Theorem 10.6.5]{BS}, which concerns the one-dimensional case. We thus have to verify the following three properties:
\begin{itemize}
\item the equi-boundeness in $L^2$ norm
\[
\sup_{\phi\in\mathcal{F}} \|\phi\|_{L^2(\mathbb{R}^N)}<+\infty;
\]
\item the equi-continuity in $L^2$ norm
\[
\lim_{|\mathbf{h}|\to 0}\sup_{\phi\in\mathcal{F}} \|\tau_\mathbf{h}\phi-\phi\|_{L^2(\mathbb{R}^N)}=0;
\] 
\item the uniform mass concentration, i.e. that for every $\varepsilon>0$ there exists $R_\varepsilon>0$ such that
\[
\sup_{\phi\in\mathcal{F}} \int_{\mathbb{R}^N\setminus B_{R_\varepsilon}(0)} |\phi|^2\,dx<\varepsilon.
\]
\end{itemize}
The first condition is a plain consequence of Proposition \ref{prop:poincare}, which also holds in $H^1(\mathbb{R}^N;V)$, thanks to Lemma \ref{lm:hilbert}.
The second condition follows because by definition and Lemma \ref{lm:hilbert} we get $H^1(\mathbb{R}^N;V)\subseteq H^1(\mathbb{R}^N)$, thus we 
can use the classical inequality for the translates of Sobolev functions 
\[
\| \tau_\mathbf{h} \phi -\phi\|_{L^2(\mathbb{R}^N)} \leq |\mathbf{h}|\, \| \nabla \phi\|_{L^2(\mathbb{R}^N)} 
\leq |\mathbf{h}|\, \| \phi\|_V \leq |\mathbf{h}|.
\]
Finally, for the third condition, we only have to notice that for every $\varrho\ge 2\,R$, we have from \eqref{lowerbound} with $\sigma=2$
\[
\int_{\mathbb{R}^N\setminus B_\varrho(0)} |\phi|^2dx \leq 4\,
\int_{\mathbb{R}^N\setminus B_\varrho(0)} \frac{V}{|x|^2}\, |\phi|^2dx \leq \frac{4}{\varrho^2} \,
\|\phi\|^2_V \leq \frac{4}{\varrho^2},\qquad \text{for every}\ \phi\in\mathcal{F}.
\]
Thus, for every $\varepsilon>0$, if we choose 
\[
\varrho>\max\left\{2\,R,\frac{2}{\sqrt{\varepsilon}}\right\},
\] 
we also get the third condition of the Riesz--Fr\'echet--Kolmogorov 
Theorem. This concludes the proof for the case $q=2$. 
\vskip.2cm\noindent
{\it Case $q>2$}. This is quite standard, let us give the details for the case $N\ge 3$, the other cases being similar. By interpolation in Lebesgue spaces and Sobolev inequality \eqref{sobolev}, we have
\[
\begin{split}
\|\phi\|_{L^q(\mathbb{R}^N)}&\le \|\phi\|^\theta_{L^{2^*}(\mathbb{R}^N)}\,\|\phi\|^{1-\theta}_{L^2(\mathbb{R}^N)}\\
&\le \left(\frac{1}{\sqrt{\mathcal{T}_N}}\right)^{\theta}\,\|\nabla \phi\|^\theta_{L^2(\mathbb{R}^N)}\,\|\phi\|^{1-\theta}_{L^2(\mathbb{R}^N)},\qquad \text{for every}\ \phi\in H^1(\mathbb{R}^N).
\end{split}
\]
Here $\theta=\theta(N,q)\in(0,1)$ is the exponent dictated by scale invariance.
This shows that any sequence $\{\phi_n\}_{n\in\mathbb{N}}$ converging strongly in $L^2(\mathbb{R}^N)$ and having bounded weak gradients in $L^2(\mathbb{R}^N)$, strongly converges in $L^q(\mathbb{R}^N)$, as well. In light of the first part of the proof, this is enough to conclude.
\vskip.2cm\noindent
{\it Case $q<2$}. Let $\{\phi_n\}_{n\in\mathbb{N}}\subseteq H^1(\mathbb{R}^N;V)$ be a bounded sequence, i.e.
\begin{equation}
\label{boundq1}
\left(\|\phi_n\|^2_{L^2(\mathbb{R}^N;V)}+\|\nabla \phi_n\|^2_{L^2(\mathbb{R}^N)}\right)^\frac{1}{2}\le C,\qquad \text{for every}\ n\in\mathbb{N}.
\end{equation}
From the first part of the proof, we know that it strongly converges in $L^2(\mathbb{R}^N)$, up to a subsequence. Let us call $\phi\in L^2(\mathbb{R}^N)$ this limit function. By the lower semicontinuity of the $L^2$ norm with respect to the weak convergence, we still have $\phi \in H^1(\mathbb{R}^N;V)$, with the bound \eqref{boundq1}. In particular, by Lemma \ref{lm:intermediate} we have $\phi \in L^q(\mathbb{R}^N)$, with $q<2$ as in the statement.
We then proceed as in the proof of Lemma \ref{lm:intermediate}: we get for every $\sigma>1$ and every $n\in\mathbb{N}$
\[
\begin{split}
\int_{\mathbb{R}^N}|\phi_n-\phi|^q\,dx&= \int_{B_{\sigma R}(0)}|\phi_n-\phi|^q\,dx+\int_{\mathbb{R}^N\setminus B_{\sigma R}(0)}|\phi_n-\phi|^q\,dx\\
&\le |B_{\sigma R}(0)|^{1-\frac{q}{2}}\,
\left(\int_{B_{\sigma R}(0)} |\phi_n-\phi|^2\,dx\right)^\frac{q}{2}\\
&+\left( 
\int_{\mathbb{R}^N\setminus B_{\sigma R}(0)} V\,|\phi_n-\phi|^2\,dx
\right)^\frac{q}{2}\,\left(\int_{\mathbb{R}^N\setminus B_{\sigma R}(0)}
\frac{1}{V^\frac{q}{2-q}}\,dx\right)^\frac{2-q}{2}.
\end{split}
\]
In particular, by using \eqref{boundq1} and the strong convergence in $L^2(\mathbb{R}^N)$, we get for every $\sigma>1$
\[
\limsup_{n\to\infty}\int_{\mathbb{R}^N}|\phi_n-\phi|^q\,dx\le (2\,C)^q\,\left(\int_{\mathbb{R}^N\setminus B_{\sigma R}(0)}
\frac{1}{V^\frac{q}{2-q}}\,dx\right)^\frac{2-q}{2}.
\]
By letting $\sigma$ go to $+\infty$ and using Lemma \ref{lm:negative}, we get 
\[
\lim_{n\to\infty}\int_{\mathbb{R}^N}|\phi_n-\phi|^q\,dx=0,
\] 
thanks to the fact that $q/(2-q)>N/2$. This concludes the proof.
\end{proof}
In what follows, we will use the following notation
\[
\mathcal{S}_2(\mathbb{R}^N)=\{\phi\in L^2(\mathbb{R}^N)\, :\, \|\phi\|_{L^2(\mathbb{R}^N)}=1\}.
\]
By classical results in Spectral Theory (see for example \cite[Theorem 10.1.5]{BS}), we have the following\footnote{As customary, we repeat each eigenvalue according to its multiplicity.}
\begin{coro}
\label{coro:variational}
Under the standing assumptions, the spectrum of the operator $\mathcal{H}_V$
is made of a countable sequence of positive eigenvalues $\{\lambda_n(V)\}_{n\in\mathbb{N}^*}$ diverging to $+\infty$, each one having an associated eigenstate $\Psi_n\in H^1(\mathbb{R}^N;V)\cap \mathcal{S}_2(\mathbb{R}^N)$. The set $\{\Psi_n\}_{n\in\mathbb{N}^*}$ forms an orthonormal basis of $L^2(\mathbb{R}^N)$. Finally, each eigenvalue has the following variational characterization 
\[
\lambda_{n}(V) = \inf\left\{ \max_{\phi\in W\cap \mathcal{S}_2(\mathbb{R}^N)} \int_{\mathbb{R}^N}|\nabla \phi|^2\,dx+\int_{\mathbb{R}^N} V\,|\phi|^2\,dx\,:\, \begin{array}{c}W \subseteq H^1(\mathbb{R}^N;V)\\
\mbox{ subspace with}\ \dim W=n
\end{array} \right\},
\]
and the infimum above is attained by the vector space generated by $\{\Psi_1,\dots,\Psi_n\}$.
\end{coro}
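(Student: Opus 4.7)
The statement is a standard consequence of the spectral theorem for positive self-adjoint operators with compact resolvent, once one has verified that the quadratic form $\mathcal{Q}_V$ provides exactly the right functional-analytic setup. My plan is to package all previous lemmas of the excerpt into the hypotheses of the abstract theorem of \cite[Theorem 10.1.5]{BS} and then read off the three claims (discreteness and divergence of eigenvalues, orthonormal basis of eigenstates, min-max formula).

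First I would record the input: by Lemma \ref{lm:hilbert}, $H^1(\mathbb{R}^N;V)$ is a Hilbert space densely and continuously embedded into $L^2(\mathbb{R}^N)$, and $\mathcal{Q}_V$ is a symmetric continuous bilinear form on it, whose associated norm $\|\cdot\|_V$ is equivalent to the graph norm. The Poincar\'e-type inequality of Proposition \ref{prop:poincare} gives the coercivity
\[
\mathcal{Q}_V[\phi,\phi] \geq \frac{1}{C_{N,R}}\,\|\phi\|_{L^2(\mathbb{R}^N)}^2, \qquad \phi\in H^1(\mathbb{R}^N;V),
\]
and in particular shows that $\mathcal{Q}_V$ is strictly positive definite, so the self-adjoint operator $\mathcal{H}_V$ attached to it (as recalled just before the corollary) is injective with $\sigma(\mathcal{H}_V)\subseteq (0,+\infty)$. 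Thus by Lax--Milgram, for every $f\in L^2(\mathbb{R}^N)$ there is a unique $Tf\in H^1(\mathbb{R}^N;V)$ with $\mathcal{Q}_V[Tf,\psi]=\int f\,\psi\,dx$ for every $\psi\in H^1(\mathbb{R}^N;V)$, and the linear map $T\colon L^2(\mathbb{R}^N)\to H^1(\mathbb{R}^N;V)$ is continuous.

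Composing $T$ with the embedding of $H^1(\mathbb{R}^N;V)$ into $L^2(\mathbb{R}^N)$ gives a bounded, self-adjoint (by symmetry of $\mathcal{Q}_V$), positive operator on $L^2(\mathbb{R}^N)$, which moreover is \emph{compact}, thanks to the embedding theorem proved just above for the case $q=2$. The classical spectral theorem for compact self-adjoint positive operators (see e.g.\ \cite[Chapter~10]{BS}) then produces a non-increasing sequence $\{\mu_n\}_{n\in\mathbb{N}^*}$ of strictly positive eigenvalues of $T$ with $\mu_n\to 0^+$, together with an $L^2$-orthonormal basis of associated eigenvectors $\{\Psi_n\}_{n\in\mathbb{N}^*}$; completeness uses injectivity of $T$, which in turn follows from the strict coercivity of $\mathcal{Q}_V$. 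Setting $\lambda_n(V):=1/\mu_n$ and using that $T\Psi_n=\mu_n \Psi_n$ forces $\Psi_n=\lambda_n\,T\Psi_n\in H^1(\mathbb{R}^N;V)\cap \mathcal{S}_2(\mathbb{R}^N)$ and $\mathcal{H}_V[\Psi_n]=\lambda_n\,\Psi_n$, yielding the first two bullets of the statement.

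For the variational formula I would apply the Courant--Fischer min-max theorem to the compact self-adjoint operator $T$, obtaining
\[
\mu_n = \max\Big\{\min_{\phi\in W\cap\mathcal{S}_2(\mathbb{R}^N)} \langle T\phi,\phi\rangle_{L^2} \,:\, \dim W=n\Big\},
\]
and then invert. Using $\langle T\phi,\phi\rangle_{L^2}^{-1}=\mathcal{Q}_V[\phi,\phi]/\langle T\phi,T\phi\rangle_{V}$ after the change of variables $\phi\leftrightarrow T\phi$, and exploiting the equivalence between minimization on $T$-images and on $H^1(\mathbb{R}^N;V)$-subspaces, yields exactly the claimed min-max expression for $\lambda_n(V)$; optimality of $\mathrm{span}\{\Psi_1,\dots,\Psi_n\}$ is immediate since the $\Psi_k$ diagonalize the form. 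There is really no obstacle here: the only point worth a line of justification is the positivity $\lambda_n(V)>0$ (a consequence of the strict Poincar\'e inequality) and the in-variance of the argument under replacing $T$-eigenvectors with $\mathcal{H}_V$-eigenvectors, which follows because $T$ maps $L^2$ into the form domain.
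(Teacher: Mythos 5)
Your proposal takes exactly the route the paper takes: the paper does not write out a proof of this corollary but invokes the abstract spectral result \cite[Theorem 10.1.5]{BS} once the compact embedding $H^1(\mathbb{R}^N;V)\hookrightarrow L^2(\mathbb{R}^N)$ is established, and what you write is essentially the standard proof of that cited theorem (coercivity via Proposition \ref{prop:poincare}, Lax--Milgram to build the resolvent $T$, compactness and self-adjointness of $T$, spectral theorem, Courant--Fischer). One small quibble with the last paragraph: the identity you write, $\langle T\phi,\phi\rangle_{L^2}^{-1}=\mathcal{Q}_V[\phi,\phi]/\langle T\phi,T\phi\rangle_{V}$, is not correct as stated (indeed $\langle T\phi,T\phi\rangle_V=\mathcal{Q}_V[T\phi,T\phi]=\langle T\phi,\phi\rangle_{L^2}$, so your formula would force $\mathcal{Q}_V[\phi,\phi]=1$); moreover the substitution $\phi\leftrightarrow T\phi$ only produces subspaces in the range of $T$, which is $\mathfrak{D}(\mathcal{H}_V)$, not all of the form domain $H^1(\mathbb{R}^N;V)$ over which the infimum in the statement is taken. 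The clean way to finish is to bypass the inversion altogether: once the orthonormal eigenbasis $\{\Psi_k\}$ with $\mathcal{Q}_V[\Psi_j,\Psi_k]=\lambda_k\delta_{jk}$ is in hand, prove the min-max for $\mathcal{Q}_V$ directly by expanding in this basis (for any $n$-dimensional $W\subseteq H^1(\mathbb{R}^N;V)$ pick $\phi\in W\cap\mathcal{S}_2$ orthogonal in $L^2$ to $\Psi_1,\dots,\Psi_{n-1}$, giving $\mathcal{Q}_V[\phi,\phi]\ge\lambda_n$; equality on $\mathrm{span}\{\Psi_1,\dots,\Psi_n\}$). With that replacement the argument is complete and matches the paper's reference.
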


\begin{rem}[The ground state]
\label{rem:ground}
In particular, we recall that for $n=1$ the above formulation reduces to
\[
\lambda_1(V)=\min_{\phi\in H^1(\mathbb{R}^N;V)\cap \mathcal{S}_2(\mathbb{R}^N)}\left\{\int_{\mathbb{R}^N} |\nabla \phi|^2\,dx+\int_{\mathbb{R}^N} V\,|\phi|^2\,dx\right\}.
\]
Thus, as usual $\lambda_1(V)$ coincides with the sharp constant in the Poincar\'e inequality of Proposition \ref{prop:poincare} and we have
\begin{equation}
\label{roughLB}
\lambda_1(V)\ge \frac{1}{C_{N,R}}.
\end{equation}
We also recall a couple of classical facts: any minimizer of the previous minimization problem must have constant sign, which is going to be strict by the minimum/maximum principle, and there exists a unique positive minimizer $\Psi_1$. This is also called the {\it ground state} of our operator. Thus, the first eigenvalue $\lambda_1(V)$ is {\it simple}, i.e. the corresponding eigenspace is one-dimensional. We refer for example to \cite[Theorem 11.8]{LL} for these facts.
\par
In particular, by uniqueness the ground state $\Psi_1$ must inherit all the possible symmetries
of the potential well $\Sigma$.
\end{rem}

\section{Stability of the spectrum}
\label{sec:4}

The following spectral continuity estimate will be useful. It guarantees that, at least for potential wells small enough, our operator has a spectrum quantitatively close to that of the model case $V(x)=|x|^2$.
\begin{lm}
\label{lm:stimarozza}
For every $n\in\mathbb{N}^*$ we have 
\[
\sqrt{\lambda_n}-R\le \sqrt{\lambda_n(V)}\le \sqrt{\lambda_n},
\]
where $\lambda_n$ is the $n-$th eigenvalue of the quantum harmonic oscillator.%, i.e. of the operator $\phi\mapsto -\Delta \phi+|x|^2\,\phi$.
\end{lm}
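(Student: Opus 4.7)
The plan is to establish the two inequalities by two applications of the min-max characterization of Corollary~\ref{coro:variational}, combined with the pointwise comparisons between $V$ and the harmonic oscillator potential $|x|^2$ that follow from $\Sigma \subseteq \overline{B_R(0)}$.

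For the upper bound $\sqrt{\lambda_n(V)} \leq \sqrt{\lambda_n}$, I would fix any $\overline{x}_0 \in \Sigma$ and exploit the pointwise estimate \eqref{uppEstVSigma}, which gives $V(x) \leq |x-\overline{x}_0|^2$ everywhere. Let $\{\psi_k\}_{k\in\mathbb{N}^*}$ denote the eigenstates of the standard quantum harmonic oscillator $-\Delta + |y|^2$, with eigenvalues $\lambda_k$, and set $\widetilde{\psi}_k(x) = \psi_k(x-\overline{x}_0)$. These translated Hermite functions are smooth and have Gaussian decay, so they lie in $H^1(\mathbb{R}^N;V)$ and form an $L^2$-orthonormal system diagonalizing $-\Delta + |x-\overline{x}_0|^2$. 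Using the $n$-dimensional test subspace $\widetilde{W} = \mathrm{span}(\widetilde{\psi}_1,\dots,\widetilde{\psi}_n)$ in the min-max formula for $\lambda_n(V)$, every normalized $\phi \in \widetilde{W}$ satisfies
\[
\int_{\mathbb{R}^N}|\nabla\phi|^2\,dx+\int_{\mathbb{R}^N} V\,|\phi|^2\,dx \le \int_{\mathbb{R}^N}|\nabla\phi|^2\,dx+\int_{\mathbb{R}^N}|x-\overline{x}_0|^2\,|\phi|^2\,dx \le \lambda_n,
\]
where the last inequality comes from the spectral decomposition of the translated oscillator. Hence $\lambda_n(V) \le \lambda_n$.

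For the lower bound $\sqrt{\lambda_n(V)} \ge \sqrt{\lambda_n} - R$, we may assume $\sqrt{\lambda_n} > R$, otherwise the bound is trivial since $\lambda_n(V) \ge 0$. The starting observation is the scalar inequality $|x| \le \sqrt{V(x)} + R$ valid for every $x \in \mathbb{R}^N$: it follows from \eqref{lowEstVSigma} when $|x| \ge R$ and is trivial when $|x| < R$. Squaring gives $|x|^2 \le V(x) + 2R\sqrt{V(x)} + R^2$. Now I would reverse the roles of the two operators and use the min-max for $\lambda_n$ (the pure oscillator eigenvalue) with the test subspace $W = \mathrm{span}(\Psi_1,\dots,\Psi_n)$ spanned by the first $n$ eigenstates of $\mathcal{H}_V$. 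For any $\phi \in W$ with $\|\phi\|_{L^2}=1$, set $A=\int|\nabla\phi|^2\,dx$ and $B=\int V|\phi|^2\,dx$, so that $A + B \le \lambda_n(V)$ by the spectral decomposition of $\mathcal{H}_V$ on $W$. Applying Cauchy--Schwarz to the cross term,
\[
\int_{\mathbb{R}^N}\sqrt{V}\,|\phi|^2\,dx \le \left(\int_{\mathbb{R}^N} V\,|\phi|^2\,dx\right)^{1/2} = \sqrt{B},
\]
yields $\int |x|^2|\phi|^2\,dx \le B + 2R\sqrt{B} + R^2 = (\sqrt{B}+R)^2$, and therefore
\[
\int_{\mathbb{R}^N}|\nabla\phi|^2\,dx + \int_{\mathbb{R}^N}|x|^2\,|\phi|^2\,dx \le A + (\sqrt{B}+R)^2 \le \lambda_n(V) + 2R\sqrt{\lambda_n(V)} + R^2 = \bigl(\sqrt{\lambda_n(V)}+R\bigr)^2,
\]
using $\sqrt{B} \le \sqrt{A+B} \le \sqrt{\lambda_n(V)}$. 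Taking the maximum over the unit sphere of $W$ and applying the min-max characterization for $\lambda_n$ gives $\lambda_n \le (\sqrt{\lambda_n(V)}+R)^2$, which is the claimed bound.

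The only delicate point is the lower bound, where one must avoid wasteful constants: splitting $|x|^2 - V$ directly using \eqref{lowEstVSigma} would give a loss like $2R$ instead of $R$ in the final estimate. The trick is to square the linear inequality $|x| \le \sqrt{V(x)} + R$ before integrating, so that the single Cauchy--Schwarz step on $\int \sqrt{V}|\phi|^2\,dx$ produces a clean perfect square $(\sqrt{\lambda_n(V)}+R)^2$. The rest is a straightforward application of the variational characterization of Corollary~\ref{coro:variational}.
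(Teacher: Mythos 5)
Your proposal is correct, and the upper bound is exactly the paper's argument. For the lower bound the paper and you reach the same inequality $\lambda_n\le\bigl(\sqrt{\lambda_n(V)}+R\bigr)^2$ by mirror-image routes: the paper takes the Young-type pointwise bound \eqref{Vepsilon}, namely $|x|^2\le(1+\varepsilon)V+(1+\varepsilon^{-1})R^2$, feeds it into the min-max to get $\lambda_n\le(1+\varepsilon)\lambda_n(V)+(1+\varepsilon^{-1})R^2$ for every $\varepsilon>0$, and then optimizes by choosing $\varepsilon=R/\sqrt{\lambda_n(V)}$; you instead keep the cross term explicit by squaring $|x|\le\sqrt{V(x)}+R$ and then control $\int\sqrt{V}\,|\phi|^2\,dx$ by Cauchy--Schwarz, which produces the perfect square $\bigl(\sqrt{B}+R\bigr)^2$ directly without a free parameter. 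The two are algebraically equivalent (Young's inequality $2R\sqrt{V}\le\varepsilon V+R^2/\varepsilon$ is exactly what interpolates between them), so neither gains anything in sharpness; the paper's route has the mild advantage of reusing an estimate \eqref{Vepsilon} that it has already recorded for later use, whereas yours is more self-contained and makes the perfect square visible from the start. One small remark: your aside that ``splitting $|x|^2-V$ directly using \eqref{lowEstVSigma} would give a loss like $2R$'' misdescribes the paper's method, which does not do such a direct split --- it uses the $\varepsilon$-weighted bound and optimization, and loses nothing; the $\varepsilon$-optimization is precisely what recovers the sharp constant.
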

\begin{proof}
We pick a point $\overline{x}\in \Sigma$, by \eqref{uppEstVSigma}  
%By \eqref{uppEstVSigma} and Young's inequality, we get in particular 
%\begin{equation}
%\label{eqn1}
%V(x) \leq (1+\varepsilon)\,|x|^2 +\left(1+\frac{1}{\varepsilon}\right)\,R^2,\qquad \text{for every}\ x\in\mathbb{R}^N.
%\end{equation}
for every $\phi \in H^1(\mathbb{R}^N;V)$ we have that
\begin{equation}
\label{uppHVnorm}
\int_{\mathbb{R}^N} |\nabla \phi|^2\,dx
+\int_{\mathbb{R}^N} V\,|\phi|^2\, dx
\leq 
\int_{\mathbb{R}^N} |\nabla \phi|^2\,dx
+\int_{\mathbb{R}^N} |x-\overline{x}|^2\,|\phi|^2\, dx.
\end{equation}
By Corollary \ref{coro:variational}, the eigenvalues have the following variational characterizations
\[
\lambda_{n}(V) = \inf\left\{ \max_{\phi\in W\cap \mathcal{S}_2(\mathbb{R}^N)} \int_{\mathbb{R}^N}|\nabla \phi|^2\,dx+\int_{\mathbb{R}^N} V\,|\phi|^2\,dx\,:\, \begin{array}{c}W \subseteq H^1(\mathbb{R}^N;V)\\
\mbox{ subspace with}\ \dim W=n
\end{array} \right\},
\]
and
\[
\lambda_n=  \inf\left\{ \max_{\phi\in W\cap \mathcal{S}_2(\mathbb{R}^N)} \int_{\mathbb{R}^N}|\nabla \phi|^2\,dx+\int_{\mathbb{R}^N} |x|^2\,|\phi|^2\,dx\,:\, \begin{array}{c}W \subseteq H^1(\mathbb{R}^N;|x|^2)\\
\mbox{ subspace with}\ \dim W=n
\end{array} \right\},
\]
where we recall the notation
\[
\mathcal{S}_2(\mathbb{R}^N)=\{\phi\in L^2(\mathbb{R}^N)\, :\, \|\phi\|_{L^2(\mathbb{R}^N)}=1\}.
\]
Thanks to the estimate above between the quadratic forms \eqref{uppHVnorm}, to the fact that $H^1(\mathbb{R}^N;V)=H^1(\mathbb{R}^N;|x|^2)$ (see Lemma \ref{lm:hilbert}) and the translation invariance of the spectrum, we then get
\[
\lambda_n(V)\le \lambda_n.
\]
Thus, we verified the upper bound.
\par
In order to get the lower bound, we proceed similarly: it is sufficient to use \eqref{Vepsilon}.
This gives an estimate between the relevant quadratic forms: more precisely, for every $\phi\in H^1(\mathbb{R}^N;|x|^2)=H^1(\mathbb{R}^N;V)$ we get
\[
\begin{split}
\int_{\mathbb{R}^N} |\nabla \phi|^2\,dx
+\int_{\mathbb{R}^N} |x|^2\,|\phi|^2\, dx
&\leq 
(1+\varepsilon)\,\left[\int_{\mathbb{R}^N} |\nabla \phi|^2\,dx
+\int_{\mathbb{R}^N} V\,|\phi|^2\, dx\right]\\
&+\left(1+\frac{1}{\varepsilon}\right)\,R^2\,\int_{\mathbb{R}^N} |\phi|^2\,dx.
\end{split}
\]
As before, this entails that
\[
\lambda_n\le (1+\varepsilon)\,\lambda_n(V)+\left(1+\frac{1}{\varepsilon}\right)\,R^2.
\]
This is valid for every $\varepsilon>0$: upon choosing $\varepsilon=R/\sqrt{\lambda_n(V)}$, we get
\[
\lambda_n\le \left(\sqrt{\lambda_n(V)}+R\right)^2.
\]
This concludes the proof.
\end{proof}
\begin{rem}
\label{rem:Rinfinito}
The previous estimate is quite useful for $R$ close to $0$. On the other hand, for $R$ very large, the estimate may be quite inaccurate, even in the case when the parameter $\delta$ defined in \eqref{delta} is large, as well. This depends on the geometry of the potential well $\Sigma$: for example, in the case (here $R>\delta$)
\[
\Sigma=\overline{B_{R}(0)}\setminus B_\delta(0)\qquad \text{for which}\qquad V(x)=\min\big\{(|x|-R)^2_+,(|x|-\delta)_+^2\big\},
\]
we have 
\[
\lim_{(R-\delta)\nearrow +\infty}\lambda_n(V)=0,\qquad \text{for every}\ n\in\mathbb{N}^*.
\]
This can be easily seen, by observing that\footnote{We intend that functions in $H^1_0(B_{R}(0)\setminus\overline{B_\delta(0)})$ are extended by zero, outside of the open set.} $H^1_0(B_{R}(0)\setminus\overline{B_\delta(0)})\subseteq H^1(\mathbb{R}^N;V)$ and that
\[
\mathcal{Q}_V[\phi,\phi]=\int_{B_{R}(0)} |\nabla\phi|^2\,dx,\qquad \text{for every}\ \phi\in H^1_0(B_{R}(0)\setminus\overline{B_\delta(0)}),
\]
since the potential $V$ identically vanishes on $B_{R}(0)\setminus\overline{B_\delta(0)}$. Thus, we obtain
\[
\begin{split}
\lambda_n(V)\le \lambda_{n,\rm Dir}\left(B_{R}(0)\setminus\overline{B_\delta(0)}\right)&\le \lambda_{n,\rm Dir}\left(B_\frac{R-\delta}{2}(0)\right)\\
&=\left(\frac{2}{R-\delta}\right)^2\,\lambda_{n,\rm Dir}(B_1(0)),\qquad \text{for every}\ n\in\mathbb{N}^*.
\end{split}
\]
Here, by $\lambda_{n,\rm Dir}$ we intend the $n-$th eigenvalue of the Dirichlet-Laplacian of an open set and we used that the spherical shell $B_{R}(0)\setminus\overline{B_\delta(0)}$ contains a ball with radius $(R-\delta)/2$.
\par
Observe that the previous example does not work, in the case the potential well $\Sigma$ becomes ``very large'' and at the same time ``very thin'', i.e. $R$ goes to $+\infty$ and the difference $R-\delta$ stays bounded. Indeed, in this case it may happen that the spectrum does not trivialize: we give an example in Appendix \ref{sec:app}.
\end{rem}
We also record the following continuity estimate, in terms of the distance between the potential wells. In what follows we denote by
\[
\mathrm{dist}_H(\Sigma_1,\Sigma_2)=\max\left\{\max_{x\in \Sigma_1} \mathrm{dist}(x,\Sigma_2),\,\max_{y\in \Sigma_2} \mathrm{dist}(y,\Sigma_1)\right\},
\]
the Hausdorff distance of the two compact sets $\Sigma_1,\Sigma_2\subseteq\mathbb{R}^N$.
\begin{prop}
\label{prop:hausdorff}
Let $\Sigma_1,\Sigma_2\subseteq\mathbb{R}^N$ be two non-empty compact sets. We consider the corresponding potentials
\[
V_i(x)=\left(\mathrm{dist}(x,\Sigma_i)\right)^2,\qquad \text{for}\ x\in\mathbb{R}^N.
\]
Then, for every $n\in\mathbb{N}^*$ we have 
\[
\left|\lambda_n(V_1)-\lambda_n(V_2)\right|\le \mathrm{dist}_H(\Sigma_1,\Sigma_2)\,\max\Big\{4+\frac{\lambda_n}{2},\, 2\,\lambda_n\Big\},
\]
where $\lambda_n$ is the $n-$th eigenvalue of the quantum harmonic oscillator.
\end{prop}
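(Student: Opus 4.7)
The plan is to prove the slightly sharper estimate
\[
\bigl|\lambda_n(V_1)-\lambda_n(V_2)\bigr|\le 2\,\mathrm{dist}_H(\Sigma_1,\Sigma_2)\,\sqrt{\lambda_n},
\]
from which the claim follows because $2\sqrt{\lambda_n}\le 4+\lambda_n/2$ for every $\lambda_n\ge 0$ (this is $(\sqrt{\lambda_n}-2)^2\ge 0$ in disguise), and therefore also $2\sqrt{\lambda_n}\le \max\{4+\lambda_n/2,\,2\lambda_n\}$. The central observation is that for distance-squared potentials the quantity that moves in a Lipschitz way under Hausdorff perturbations of the well is $\sqrt{V_i}=\mathrm{dist}(\cdot,\Sigma_i)$, not $V_i$ itself, and working at the level of $\sqrt{V_i}$ is precisely what makes the final bound genuinely linear in $d:=\mathrm{dist}_H(\Sigma_1,\Sigma_2)$.

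For every $x\in\mathbb{R}^N$, choosing $y_1\in\Sigma_1$ realizing $\mathrm{dist}(x,\Sigma_1)$ and $y_2\in\Sigma_2$ with $|y_1-y_2|\le d$ (which exists by the definition of Hausdorff distance), the triangle inequality and its symmetric counterpart give the pointwise Lipschitz bound $|\sqrt{V_1(x)}-\sqrt{V_2(x)}|\le d$. Multiplying by $\sqrt{V_1(x)}+\sqrt{V_2(x)}$ this becomes $|V_1-V_2|\le d\,(\sqrt{V_1}+\sqrt{V_2})$ pointwise. Integrating against $|\phi|^2$ for any $\phi\in\mathcal{S}_2(\mathbb{R}^N)\cap H^1(\mathbb{R}^N;V_1)$, Cauchy--Schwarz (exploiting $\|\phi\|_{L^2}=1$) combined with $\int V_i\,|\phi|^2\le \mathcal{Q}_{V_i}[\phi,\phi]$ yields
\[
\bigl|\mathcal{Q}_{V_1}[\phi,\phi]-\mathcal{Q}_{V_2}[\phi,\phi]\bigr|\le d\left(\sqrt{\mathcal{Q}_{V_1}[\phi,\phi]}+\sqrt{\mathcal{Q}_{V_2}[\phi,\phi]}\right).
\]
Setting $a=\sqrt{\mathcal{Q}_{V_1}[\phi,\phi]}$ and $b=\sqrt{\mathcal{Q}_{V_2}[\phi,\phi]}$, this reads $|b^2-a^2|=|b-a|\,(a+b)\le d\,(a+b)$, and dividing through (the case $a+b=0$ being trivial) gives the clean Lipschitz-in-square-root perturbation $\bigl|\sqrt{\mathcal{Q}_{V_2}[\phi,\phi]}-\sqrt{\mathcal{Q}_{V_1}[\phi,\phi]}\bigr|\le d$.

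To conclude, I feed this into the min-max characterization from Corollary \ref{coro:variational}. Let $W_1$ be the span of the first $n$ eigenstates of $\mathcal{H}_{V_1}$; since the two form domains coincide by Lemma \ref{lm:hilbert}, $W_1\subseteq H^1(\mathbb{R}^N;V_2)$, and the inequality $\mathcal{Q}_{V_1}[\phi,\phi]\le \lambda_n(V_1)$ on $W_1\cap\mathcal{S}_2(\mathbb{R}^N)$ combined with the bound just derived gives $\sqrt{\mathcal{Q}_{V_2}[\phi,\phi]}\le \sqrt{\lambda_n(V_1)}+d$ on the same set. Plugging $W_1$ into the min-max formula for $\lambda_n(V_2)$ produces $\sqrt{\lambda_n(V_2)}\le \sqrt{\lambda_n(V_1)}+d$; interchanging the roles of $V_1$ and $V_2$ gives $|\sqrt{\lambda_n(V_1)}-\sqrt{\lambda_n(V_2)}|\le d$. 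Multiplying by $\sqrt{\lambda_n(V_1)}+\sqrt{\lambda_n(V_2)}\le 2\sqrt{\lambda_n}$, where the last bound is Lemma \ref{lm:stimarozza}, delivers the sharper estimate and hence the statement.

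I expect the only non-routine ingredient to be the choice of phrasing the perturbation at the level of $\sqrt{V_i}$: a direct application of Young's inequality to $|V_1-V_2|$ produces an unwanted $d^2$ remainder that cannot be absorbed into a linear-in-$d$ bound without restricting the size of $d$ a priori, whereas the factorization $b^2-a^2=(b-a)(a+b)$ eliminates the quadratic overhead in a single line.
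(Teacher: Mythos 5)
Your proof is correct, and it takes a genuinely different route from the paper's, despite sharing the same starting observation. Both proofs begin with the pointwise bound $|V_1-V_2|\le d\,(\sqrt{V_1}+\sqrt{V_2})$ where $d=\mathrm{dist}_H(\Sigma_1,\Sigma_2)$, but they diverge immediately afterwards. The paper integrates this against $|\phi|^2$, applies H\"older to reduce each term to $(\int V_i|\phi|^2)^{1/2}$, and then invokes Young's inequality with a free parameter $\varepsilon$, which produces both a multiplicative factor $(1+\varepsilon)$ in front of the comparison of quadratic forms and an additive remainder of order $d^2/\varepsilon$. The paper then optimizes $\varepsilon\sim d$, but this requires the preliminary reduction to $d\le 1$ (handled by a separate trivial case) and yields the bound in the somewhat awkward form $\max\{4+\lambda_n/2,\,2\lambda_n\}$. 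Your approach replaces the Young step by the algebraic identity $|b^2-a^2|=|b-a|(a+b)$ with $a=\sqrt{\mathcal{Q}_{V_1}[\phi,\phi]}$, $b=\sqrt{\mathcal{Q}_{V_2}[\phi,\phi]}$, which cancels the $(a+b)$ factor exactly and produces the clean Lipschitz bound $|b-a|\le d$ with no restriction on $d$ and no spurious quadratic remainder. Fed into the min--max formula (using Lemma \ref{lm:hilbert} to identify the form domains), this gives the genuinely sharper and more informative inequality
\[
\bigl|\sqrt{\lambda_n(V_1)}-\sqrt{\lambda_n(V_2)}\bigr|\le \mathrm{dist}_H(\Sigma_1,\Sigma_2),
\]
from which the stated estimate follows by multiplying by $\sqrt{\lambda_n(V_1)}+\sqrt{\lambda_n(V_2)}\le 2\sqrt{\lambda_n}$ (Lemma \ref{lm:stimarozza}) and observing $2\sqrt{\lambda_n}\le\max\{4+\lambda_n/2,\,2\lambda_n\}$. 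In short: the paper perturbs $\lambda_n$ itself and pays with $\varepsilon$-juggling and a case split, while you perturb $\sqrt{\lambda_n}$ and get a one-line Lipschitz estimate that is both cleaner and strictly stronger. This is the same structural improvement the paper itself exploits in Lemma \ref{lm:stimarozza}; you have simply applied it consistently at the level of quadratic forms as well.
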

\begin{proof}
We first observe that if $\mathrm{dist}_H(\Sigma_1,\Sigma_2)>1$, the estimate trivially holds true. Indeed, by Lemma \ref{lm:stimarozza} we have
\[
\left|\lambda_n(V_1)-\lambda_n(V_2)\right|\le (\lambda_n(V_1)+\lambda_n(V_2))\le 2\,\lambda_n\le 2\,\lambda_n\,\mathrm{dist}_H(\Sigma_1,\Sigma_2).
\]
We can thus assume that $\mathrm{dist}_H(\Sigma_1,\Sigma_2)\le 1$. 
As in the previous proof, it is sufficient to estimate the relevant quadratic forms. We observe that 
\[
|V_1-V_2|=\left|\sqrt{V_1}-\sqrt{V_2}\right|\,\left(\sqrt{V_1}+\sqrt{V_2}\right)\le \mathrm{dist}_H(\Sigma_1,\Sigma_2)\,\left(\sqrt{V_1}+\sqrt{V_2}\right),
\]
thanks to the triangle inequality. 
Thus, for any $\phi
\in H^1(\mathbb{R}^N;V_1)$ we have that
\begin{align}
\label{uppHVnorm_gen}
\nonumber
\int_{\mathbb{R}^N} |\nabla \phi|^2\,dx
+\int_{\mathbb{R}^N} V_1\,|\phi|^2\, dx
&\leq 
\left(
\int_{\mathbb{R}^N} |\nabla \phi|^2\,dx
+\int_{\mathbb{R}^N} V_2\,|\phi|^2\, dx
\right)\\
&+\mathrm{dist}_H(\Sigma_1,\Sigma_2)\,\left(\int_{\mathbb{R}^N}\sqrt{V_1}\,|\phi|^2\,dx+\int_{\mathbb{R}^N}\sqrt{V_2}\,|\phi|^2\,dx\right).
\end{align}
Observe that if $\phi$ has unit $L^2$ norm, we have by H\"older's and Young's inequalities
\[
\begin{split}
\mathrm{dist}_H(\Sigma_1,\Sigma_2)\,\int_{\mathbb{R}^N}\sqrt{V_1}\,|\phi|^2\,dx&\le \mathrm{dist}_H(\Sigma_1,\Sigma_2)\,\left(\int_{\mathbb{R}^N}V_1\,|\phi|^2\,dx\right)^\frac{1}{2}\\
&\le \frac{\varepsilon}{2}\,\int_{\mathbb{R}^N}V_1\,|\phi|^2\,dx+\frac{1}{2\,\varepsilon}\,\left(\mathrm{dist}_H(\Sigma_1,\Sigma_2)\right)^2.
\end{split}
\]
Similarly, we get for $0<\varepsilon<1$
\[
\mathrm{dist}_H(\Sigma_1,\Sigma_2)\,\int_{\mathbb{R}^N}\sqrt{V_2}\,|\phi|^2\,dx\le \frac{\varepsilon\,(1-\varepsilon)}{2}\,\int_{\mathbb{R}^N}V_2\,|\phi|^2\,dx+\frac{1}{2\,\varepsilon\,(1-\varepsilon)}\,\left(\mathrm{dist}_H(\Sigma_1,\Sigma_2)\right)^2.
\]
By inserting these estimates in \eqref{uppHVnorm_gen}, with simple algebraic manipulations we get
\[
\begin{split}
\left(1-\frac{\varepsilon}{2}\right)\, \left(\int_{\mathbb{R}^N} |\nabla \phi|^2\,dx
+\int_{\mathbb{R}^N} V_1\,|\phi|^2\, dx\right)&\le \left(1+\frac{\varepsilon\,(1-\varepsilon)}{2}\right)\, \left(\int_{\mathbb{R}^N} |\nabla \phi|^2\,dx
+\int_{\mathbb{R}^N} V_2\,|\phi|^2\, dx\right)\\
&+\frac{2-\varepsilon}{2\,\varepsilon\,(1-\varepsilon)}\,\left(\mathrm{dist}_H(\Sigma_1,\Sigma_2)\right)^2.
\end{split}
\]
Thanks to this and the fact that $H^1(\mathbb{R}^N;V_1)=H^1(\mathbb{R}^N;V_2)$ (see again Lemma \ref{lm:hilbert}), we then get
\[
\begin{split}
\lambda_n(V_1)&\le \frac{2+\varepsilon-\varepsilon^2}{2-\varepsilon}\,\lambda_n(V_2)+\frac{1}{\varepsilon\,(1-\varepsilon)}\,\left(\mathrm{dist}_H(\Sigma_1,\Sigma_2)\right)^2\\
&=(1+\varepsilon)\,\lambda_n(V_2)+\frac{1}{\varepsilon\,(1-\varepsilon)}\,\left(\mathrm{dist}_H(\Sigma_1,\Sigma_2)\right)^2.
\end{split}
\]
By choosing 
\[
\varepsilon=\frac{1}{2}\,\mathrm{dist}_H(\Sigma_1,\Sigma_2),
\] 
we get
\[
\lambda_n(V_1)-\lambda_n(V_2)\le \frac{\lambda_n(V_2)}{2}\,\mathrm{dist}_H(\Sigma_1,\Sigma_2)+\frac{4}{2-\mathrm{dist}_H(\Sigma_1,\Sigma_2)}\,\mathrm{dist}_H(\Sigma_1,\Sigma_2).
\]
The eigenvalue $\lambda_n(V_2)$ can be bounded from above thanks to Lemma \ref{lm:stimarozza}, so to get
\[
\lambda_n(V_1)-\lambda_n(V_2)\le \mathrm{dist}_H(\Sigma_1,\Sigma_2)\,\left(4+\frac{\lambda_n}{2}\right).
\]
By exchanging the role for $V_1$ and $V_2$, we conclude.
\end{proof}

\section{Summability of eigenstates}
\label{sec:5}

In this section we prove some integrability properties of the 
eigenstates $\Psi_n$. 
Of course, Lemma~\ref{lm:intermediate} applies in particular to any $\phi\in H^1(\mathbb{R}^N;V)$. However, we will show that this integrability information can be considerably improved. We give at first a classical global $L^\infty$ estimate for the eigenstates of our operator. 
\begin{prop}
\label{prop:moser1}
We have $\Psi_n\in L^\infty(\mathbb{R}^N)$ for every $n\in\mathbb{N}^*$. More precisely, we have the following estimate 
\[
\|\Psi_n\|_{L^\infty(\mathbb{R}^N)}\le \mathrm{M}_N\,\Big(\lambda_n(V)\Big)^\frac{N}{4},
\]
where $\mathrm{M}_N>0$ is an explicit constant depending on the dimension only. 
\end{prop}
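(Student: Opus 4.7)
My plan is to run a standard Moser iteration, exploiting only the non-negativity of $V$ and the Sobolev-type inequality appropriate to the dimension. Since $\Psi_n$ solves the weak eigenvalue equation
\[
\int_{\mathbb{R}^N}\langle\nabla\Psi_n,\nabla\varphi\rangle\,dx+\int_{\mathbb{R}^N}V\,\Psi_n\,\varphi\,dx=\lambda_n(V)\,\int_{\mathbb{R}^N}\Psi_n\,\varphi\,dx,
\]
and $\Psi_n\in H^1(\mathbb{R}^N;V)\cap L^\infty_{\mathrm{loc}}$ after a first truncation step, I would test with $\varphi=\Psi_n\,|\Psi_n|^{2\alpha}$ for $\alpha\ge 0$ (formally first, then justified via the standard truncation $\min\{|\Psi_n|,M\}^{2\alpha}\,\Psi_n$ and a pass to the limit using Lemma \ref{lm:intermediate} or the previous iteration step to secure integrability). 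Since $V\ge 0$ the potential term has a sign and can be dropped, giving
\[
(2\alpha+1)\,\int_{\mathbb{R}^N}|\nabla\Psi_n|^2\,|\Psi_n|^{2\alpha}\,dx\le \lambda_n(V)\,\int_{\mathbb{R}^N}|\Psi_n|^{2(\alpha+1)}\,dx.
\]
Rewriting the gradient term as $(\alpha+1)^{-2}\,|\nabla|\Psi_n|^{\alpha+1}|^2$ produces the key inequality on the auxiliary function $u_\alpha:=|\Psi_n|^{\alpha+1}$.

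For $N\ge 3$, applying the Sobolev inequality \eqref{sobolev} to $u_\alpha$ gives
\[
\|\Psi_n\|_{(\alpha+1)\,2^*}\le \left(\frac{(\alpha+1)^2}{(2\alpha+1)\,\mathcal{T}_N}\,\lambda_n(V)\right)^{\!\frac{1}{2(\alpha+1)}}\,\|\Psi_n\|_{2(\alpha+1)}.
\]
Setting $p_0=2$ and $p_{k+1}=(2^*/2)\,p_k$, hence $p_k=2\,q^k$ with $q=2^*/2=N/(N-2)>1$, I would iterate this with the choice $2(\alpha_k+1)=p_k$ and collect
\[
\|\Psi_n\|_{L^\infty(\mathbb{R}^N)}=\lim_{k\to\infty}\|\Psi_n\|_{p_k}\le \prod_{k=0}^{\infty}\left(\frac{(p_k/2)^2\,\lambda_n(V)}{(p_k-1)\,\mathcal{T}_N}\right)^{\!\frac{1}{p_k}}\,\|\Psi_n\|_{L^2(\mathbb{R}^N)}.
\]
Using $\|\Psi_n\|_{L^2}=1$ and $\sum_{k\ge 0}1/p_k=q/\bigl(2(q-1)\bigr)=N/4$, the product converges (the $k$-dependent factors $(p_k/2)^2/(p_k-1)$ contribute a finite dimensional constant since $\sum (\log p_k)/p_k<+\infty$) and extracts exactly the power $(\lambda_n(V))^{N/4}$, with a computable $\mathrm{M}_N$ depending only on $N$. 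For $N=2$ the same scheme works using the Ladyzhenskaya inequality \eqref{lady} (choosing $q>2$ arbitrary gives an iteration with ratio $>1$, and the exponent of $\lambda_n(V)$ still sums to $N/4=1/2$); for $N=1$ it is even shorter: Morrey's inequality \eqref{morrey} applied once to $\Psi_n$ together with $\|\Psi_n'\|_{L^2}^2\le \lambda_n(V)$ yields $\|\Psi_n\|_{L^\infty}\le \sqrt{2}\,\lambda_n(V)^{1/4}$ directly, matching $N/4=1/4$.

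The only nontrivial point is the justification of the test function at the initial step, because a priori one only knows $\Psi_n\in H^1(\mathbb{R}^N;V)$ with no further integrability; this will be handled by the standard cut-off $T_M(t):=\min\{|t|,M\}^{2\alpha}\,t$, monotone convergence on the gradient side, and the $L^2$-bound $\|\Psi_n\|_{L^2}=1$ to pass to the limit $M\to\infty$ at the first stage, and thereafter by bootstrapping since $\Psi_n\in L^{p_k}$ at the $k$-th step. Tracking the constant through the geometric product and verifying convergence of $\sum (\log p_k)/p_k$ (which reduces to $\sum k\,q^{-k}<+\infty$) is the main, but routine, computational burden.
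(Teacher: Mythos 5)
Your proposal is correct and follows essentially the same route as the paper: a Moser-type iteration with power-type test functions (truncated to justify admissibility), dropping the non-negative potential term, applying the dimension-appropriate functional inequality (Sobolev for $N\ge3$, Ladyzhenskaya for $N=2$, Morrey directly for $N=1$), and tracking the geometric sums to extract the explicit power $N/4$ of $\lambda_n(V)$. The only cosmetic differences are that the paper smooths the truncation into a $C^1$ primitive $F_{\beta,M}$ instead of the Lipschitz $T_M(t)=\min\{|t|,M\}^{2\alpha}t$, and passes to the limit via Fatou rather than monotone convergence; both choices are standard and interchangeable here.
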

\begin{proof}
We divide the proof in three cases, according to the value of the dimension $N$. For $N\ge 2$, 
we will use a standard iterative technique {\it \`a la} Moser.
\vskip.2cm\noindent
{\it Case $N\ge 3$}. For every $\beta\ge 1$ and $M>0$, we define the non-negative continuous function
\[
f_{\beta,M}(t)=\left\{\begin{array}{ll}
\beta\,M^{\beta-1},& \mbox{ if } |t|> M,\\
\beta\,|t|^{\beta-1},& \mbox{ if } |t|\le M,\\
%\beta\,M^{\beta-1},& \mbox{ if } t\ge M,
\end{array}
\right.
\]
and take the function
\[
F_{\beta,M}(t)=\int_0^t f_{\beta,M}(\tau)\,d\tau.
\]
Observe that 
\begin{equation}
\label{crescitaF}
0\le t\,F_{\beta,M}(t)\le |t|^{\beta+1},\qquad \mbox{ for every } t\in
\mathbb{R}.
\end{equation}
Moreover, by construction, this is a $C^1$ function with 
bounded derivative. In particular, we still have 
\[
F_{\beta,M}(\Psi_n)\in H^1(\mathbb{R}^N;V).
\]
We can thus use this function as a feasible test function in the weak 
formulation of the eigenvalue equation. We get
\[
\int_{\mathbb{R}^N} f_{\beta,M}(\Psi_n)\,|\nabla \Psi_n|^2\,dx+
\int_{\mathbb{R}^N} V\, \Psi_n\, F_{\beta,M}(\Psi_n)\,dx=\lambda_n(V)\,
\int_{\mathbb{R}^N} \Psi_n\,F_{\beta,M}(\Psi_n)\,dx.
\]
In particular, since $V$ is non-negative, we get
\[
\int_{\mathbb{R}^N} f_{\beta,M}(\Psi_n)\,|\nabla \Psi_n|^2\,dx\le \lambda_n(V)\,
\int_{\mathbb{R}^N} \Psi_n\,F_{\beta,M}(\Psi_n)\,dx.
\]
We now introduce the function
\[
G_{\beta,M}(t)=\int_0^t \sqrt{f_{\beta,M}(\tau)}\,d\tau,
\]
then the last estimate can be rewritten as
\begin{equation}
\label{basemoser}
\int_{\mathbb{R}^N} |\nabla (G_{\beta,M} \circ \Psi_n)|^2\,dx\le \lambda_n(V)\,\int_{\mathbb{R}^N} \Psi_n\,F_{\beta,M}(\Psi_n)\,dx.
\end{equation}
Observe that by construction $G_{\beta,M}$ is again a $C^1$ function, with bounded derivative. Thus $(G_{\beta,M} \circ \Psi_n)\in H^1(\mathbb{R}^N)$ and we can apply the Sobolev inequality \eqref{sobolev}, so to obtain
\begin{equation}
\label{vrooom!!}
\mathcal{T}_N\,\left(\int_{\mathbb{R}^N} |G_{\beta,M}(\Psi_n)|^{2^*}\,dx\right)^\frac{2}{2^*}\le \lambda_n(V)\,\int_{\mathbb{R}^N} \Psi_n\,F_{\beta,M}(\Psi_n)\,dx.
\end{equation}
In order to get that $\Psi_n\in L^q(\mathbb{R}^N)$ for every $2\le q<+\infty$, we define the sequence of exponents $\{\vartheta_i\}_{i\in\mathbb{N}}$ as follows
\[
\vartheta_0=1,\qquad \vartheta_{i+1}=\frac{2^*}{2}\,\vartheta_i=\left(\frac{N}{N-2}\right)^{i+1}.
\]
We then prove the following implication
\begin{equation}
\label{iterativo}
\Psi_n\in L^{2\,\vartheta_i}(\mathbb{R}^N)\qquad \Longrightarrow \qquad \Psi_n\in L^{2\,\vartheta_{i+1}}(\mathbb{R}^N).
\end{equation}
Indeed, let us suppose that $\Psi_n\in L^{2\,\vartheta_i}(\mathbb{R}^N)$.
We use \eqref{vrooom!!} with $\beta=2\,\vartheta_i-1$, so to obtain
\[
\begin{split}
\mathcal{T}_N\,\left(\int_{\mathbb{R}^N} |G_{\beta,M}(\Psi_n)|^{2^*}\,dx\right)^\frac{2}{2^*}&\le \lambda_n(V)\,\int_{\mathbb{R}^N} \Psi_n\,F_{\beta,M}(\Psi_n)\,dx\le \lambda_n(V)\,\int_{\mathbb{R}^N} \left|\Psi_n\right|^{2\vartheta_i}\,dx.
\end{split}
\]
We also used \eqref{crescitaF}, in the second inequality. We can now pass to the limit as $M$ goes to $+\infty$ on the left-hand side: by observing that
\[
|G_{\beta,M}(t)|\le\frac{2\,\sqrt{\beta}}{\beta+1}\,|t|^\frac{1+\beta}{2}\qquad \text{and}\qquad \lim_{M\to+\infty}|G_{\beta,M}(t)|=\frac{2\,\sqrt{\beta}}{\beta+1}\,|t|^\frac{1+\beta}{2},\ \mbox{ for every } t\in\mathbb{R},
\]
an application of Fatou's Lemma leads to
\[
\frac{2\,\vartheta_i-1}{\vartheta_i^2}\,\mathcal{T}_N\,\left(\int_{\mathbb{R}^N} |\Psi_n|^{2^*\vartheta_i}\,dx\right)^\frac{2}{2^*}
\le \lambda_n(V)\,\int_{\mathbb{R}^N} |\Psi_n|^{2\vartheta_i}\,dx.
\]
By using that $2^*\vartheta_i=2\,\vartheta_{i+1}$ and the fact that $\Psi_n\in L^{2\,\vartheta_i}(\mathbb{R}^N)$ we get $\Psi_n\in L^{2\,\vartheta_{i+1}}(\mathbb{R}^N)$, as desired.
\par
By observing that $\vartheta_i$ is an increasing sequence diverging to $+\infty$ and that $\Psi_n\in L^{2\,\vartheta_0}(\mathbb{R}^N)$ by assumption (observe that $2\,\vartheta_0=2$), we can iterate \eqref{iterativo} as many times as we wish, so to get
\[
\Psi_n\in L^q(\mathbb{R}^N),\qquad \text{for every}\ 2\le q<+\infty.
\]
We are now ready to prove the claimed $L^\infty$ bound. We keep the same notation as in the previous part. We have seen that 
\begin{equation}
\label{tassellino}
\left(\int_{\mathbb{R}^N} |\Psi_n|^{2\,\vartheta_{i+1}}\,dx\right)^\frac{2}{2^*}
\le \frac{\lambda_n(V)}{\mathcal{T}_N}\,\vartheta_i\,\int_{\mathbb{R}^N} |\Psi_n|^{2\,\vartheta_i}\,dx,
\end{equation}
for every $i\in\mathbb{N}$. Observe that we used
\[
\frac{2\,\vartheta_i-1}{\vartheta_i^2}\ge \frac{1}{\vartheta_i}.
\]
We take the power $1/(2\,\vartheta_i)$ on both sides of \eqref{tassellino}, so to get
\begin{equation}
\label{moser}
\left(\int_{\mathbb{R}^N} |\Psi_n|^{2\,\vartheta_{i+1}}\,dx\right)^\frac{1}{2\,\vartheta_{i+1}}
\le \left(\frac{\lambda_n(V)}{\mathcal{T}_N}\right)^\frac{1}{2\,\vartheta_i}\,\vartheta_i^\frac{1}{2\,\vartheta_i}\,\left(\int_{\mathbb{R}^N} |\Psi_n|^{2\,\vartheta_i}\,dx\right)^\frac{1}{2\,\vartheta_i}.
\end{equation}
We now iterate $k$ times \eqref{moser}, by starting from $i=0$: this gives
\begin{equation}
\label{iterato}
\left(\int_{\mathbb{R}^N} |\Psi_n|^{2\,\vartheta_{k+1}}\,dx\right)^\frac{1}{2\,\vartheta_{k+1}}\le \left(\frac{\lambda_n(V)}{\mathcal{T}_N}\right)^{\sum\limits_{i=0}^k\frac{1}{2\vartheta_i}}\,\prod_{i=0}^k \vartheta_i^\frac{1}{2\,\vartheta_i}\,\left(\int_{\mathbb{R}^N} |\Psi_n|^{2}\,dx\right)^\frac{1}{2}.
\end{equation} 
Observe that 
\[
\lim_{k\to\infty}\sum_{i=0}^k\frac{1}{2\vartheta_i}=\frac{1}{2}\,\sum_{i=0}^\infty\left(\frac{N-2}{N}\right)^i=\frac{1}{2}\,\frac{1}{1-\dfrac{N-2}{N}}=\frac{N}{4},
\]
and that 
\[
\lim_{k\to\infty} \prod_{i=0}^k \vartheta_i^\frac{1}{2\,\vartheta_i}=:C_N<+\infty.
\]
This shows that we can take the limit as $k$ goes to $\infty$ in \eqref{iterato}. By further observing that 
\[
\lim_{k\to\infty}\left(\int_{\mathbb{R}^N} |\Psi_n|^{2\,\vartheta_{k+1}}\,dx\right)^\frac{1}{2\,\vartheta_{k+1}}=\|\Psi_n\|_{L^\infty(\mathbb{R}^N)},
\]
we obtain
\[
\|\Psi_n\|_{L^\infty(\mathbb{R}^N)}\le C\,\left(\frac{\lambda_n(V)}{\mathcal{T}_N}\right)^\frac{N}{4}\,\|\Psi_n\|_{L^2(\mathbb{R}^N)}.
\]
By recalling that $\Psi_n$ has unit $L^2$ norm, we conclude the proof in the case $N\ge 3$.
\vskip.2cm\noindent
{\it Case $N=2$}. We already know that $\Psi_n\in L^q(\mathbb{R}^2)$, for every $2\le q<+\infty$, thanks to \eqref{lady}. By keeping the same notation as before, we go back to \eqref{basemoser} and multiply both sides by 
\[
\int_{\mathbb{R}^2} |G_{\beta,M}(\Psi_n)|^2\,dx.
\]
This yields
\[
\begin{split}
\left(\int_{\mathbb{R}^2} |\nabla (G_{\beta,M} \circ \Psi_n)|^2\,dx\right)&\,\left(\int_{\mathbb{R}^2} |G_{\beta,M}(\Psi_n)|^2\,dx\right)\\
&\le \lambda_n(V)\,\left(\int_{\mathbb{R}^2} |\Psi_n|^{\beta+1}\,dx\right)\,\left(\int_{\mathbb{R}^2} |G_{\beta,M}(\Psi_n)|^2\,dx\right),
\end{split}
\]
where we also used \eqref{crescitaF}.
We apply \eqref{lady} with $q=4$ on the left hand-side, so to get
\[
\pi\,\int_{\mathbb{R}^2} |G_{\beta,M}(\Psi_n)|^4\,dx\le \lambda_n(V)\,\left(\int_{\mathbb{R}^2} |\Psi_n|^{\beta+1}\,dx\right)\,\left(\int_{\mathbb{R}^2} |G_{\beta,M}(\Psi_n)|^2\,dx\right).
\]
We now take the limit as $M$ goes to $+\infty$ and get
\[
\pi\,\left(\frac{2\,\sqrt{\beta}}{\beta+1}\right)^2\,\int_{\mathbb{R}^2} |\Psi_n|^{4\frac{\beta+1}{2}}\,dx\le \lambda_n(V)\,\left(\int_{\mathbb{R}^2} |\Psi_n|^{\beta+1}\,dx\right)^2.
\]
This time, we define the sequence of exponents $\{\vartheta_i\}_{i\in\mathbb{N}}$ as follows
\[
\vartheta_0=1,\qquad \vartheta_{i+1}=2\,\vartheta_i=2^{i+1},
\]
and use the above estimate with $\beta=2\,\vartheta_i-1$, again. This gives
\[
\pi\,\frac{(2\,\vartheta_i-1)}{\vartheta_i^2}\,\int_{\mathbb{R}^2} |\Psi_n|^{2\vartheta_{i+1}}\,dx\le \lambda_n(V)\,\left(\int_{\mathbb{R}^2} |\Psi_n|^{2\vartheta_i}\,dx\right)^2.
\]
We can now proceed as in the case $N\ge 3$ and get the desired conclusion.
\vskip.2cm\noindent
{\it Case $N=1$}. This is the simplest case, the required result follows immediately from \eqref{morrey}.
\end{proof}
\begin{rem}
\label{rem:uniformLinfty}
By using the estimate of Lemma \ref{lm:stimarozza}, we can also infer the following uniform estimate
\[
\|\Psi_n\|_{L^\infty(\mathbb{R}^N)}\le \mathrm{M}_N\,\lambda_n^\frac{N}{4},\qquad \text{for every}\ n\in\mathbb{N}^*,
\]
where $\lambda_n$ is the $n-$th eigenvalue of the quantum harmonic oscillator. Observe that the upper bound does not depend on the potential well $\Sigma$.
\end{rem}
We now record a weighted integrability result on eigenstates. Apart for being interesting in itself, it will be useful in the sequel.
\begin{prop}
\label{prop:perognik}
For every $n\in\mathbb{N}^*$ and $k\in\mathbb{N}$, we have 
\[
\int_{\mathbb{R}^N} |\nabla \Psi_n|^2\,|x|^{2\,k}\,dx+\int_{\mathbb{R}^N} |\Psi_n|^2\,|x|^{2\,k+2}\,dx\le C_{N,n,k,R}.
\]
\end{prop}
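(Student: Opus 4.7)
The plan is to prove the estimate by induction on $k\in\mathbb{N}$, testing the eigenvalue equation against $\Psi_n\,|x|^{2k}$ (suitably cut off) and extracting the $|x|^{2k+2}$-weighted integral via the quadratic lower bound for $V$ from Lemma~\ref{lm:negative}.

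The base case $k=0$ is immediate: using $\Psi_n$ itself as a test function we have $\mathcal{Q}_V[\Psi_n,\Psi_n]=\lambda_n(V)\le\lambda_n$ (by Lemma~\ref{lm:stimarozza}), so that $\int|\nabla\Psi_n|^2\,dx\le\lambda_n$ and $\int V\,|\Psi_n|^2\,dx\le\lambda_n$. Inequality \eqref{V1} with $\sigma=2$ then gives $\int_{\mathbb{R}^N}|x|^2\,|\Psi_n|^2\,dx\le 4R^2+4\lambda_n$, and the $k=0$ estimate follows.

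For the inductive step $k-1\Rightarrow k$, the formal computation is as follows. Test $-\Delta\Psi_n+V\,\Psi_n=\lambda_n(V)\,\Psi_n$ against $\Psi_n\,|x|^{2k}$; after writing $2k\,\Psi_n\nabla\Psi_n\cdot x=k\,\nabla(\Psi_n^2)\cdot x$ and using $\mathrm{div}(x\,|x|^{2k-2})=(N+2k-2)\,|x|^{2k-2}$, one obtains the identity
\[
\int_{\mathbb{R}^N}|\nabla\Psi_n|^2\,|x|^{2k}\,dx+\int_{\mathbb{R}^N}V\,|\Psi_n|^2\,|x|^{2k}\,dx=\lambda_n(V)\int_{\mathbb{R}^N}|\Psi_n|^2\,|x|^{2k}\,dx+k(N+2k-2)\int_{\mathbb{R}^N}|\Psi_n|^2\,|x|^{2k-2}\,dx.
\]
The two integrals on the right are controlled by the inductive hypothesis (for $k-1$), since that step already bounds $\int|\Psi_n|^2\,|x|^{2k}\,dx$, and the lower-weight integral is controlled by this via $|x|^{2k-2}\le 1+|x|^{2k}$. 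To transform the resulting bound on $\int V\,|\Psi_n|^2\,|x|^{2k}\,dx$ into a bound on $\int|\Psi_n|^2\,|x|^{2k+2}\,dx$, I would apply \eqref{lowerbound} with $\sigma=2$, namely $V(x)\ge |x|^2/4$ on $\mathbb{R}^N\setminus B_{2R}(0)$, together with the trivial bound $|x|^{2k+2}\le(2R)^{2k+2}$ on $B_{2R}(0)$, so that
\[
\int_{\mathbb{R}^N}|x|^{2k+2}\,|\Psi_n|^2\,dx\le (2R)^{2k+2}+4\int_{\mathbb{R}^N}V\,|\Psi_n|^2\,|x|^{2k}\,dx.
\]

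The main technical obstacle is justifying the integration by parts: a priori we do not know that $\Psi_n\,|x|^{2k}$ lies in $H^1(\mathbb{R}^N;V)$, precisely because $\int|\nabla\Psi_n|^2\,|x|^{2k}\,dx$ is what we are trying to estimate. I would address this by introducing a smooth cutoff $\eta_M$ with $\eta_M\equiv 1$ on $B_M(0)$, $\mathrm{supp}\,\eta_M\subseteq B_{2M}(0)$ and $|\nabla\eta_M|\le C/M$, and using $\Psi_n\,|x|^{2k}\,\eta_M^2$ as an admissible test function (recalling that $\Psi_n\in L^\infty(\mathbb{R}^N)$ by Proposition~\ref{prop:moser1}). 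The extra cross-terms involving $\nabla\eta_M$ are absorbed via Young's inequality, leaving a remainder bounded by $C\int_{M\le|x|\le 2M}|\Psi_n|^2\,|x|^{2k-2}\,dx$, which vanishes as $M\to+\infty$ by the inductive hypothesis. Passing to the limit validates the identity and closes the induction.
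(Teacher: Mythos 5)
Your proof is correct and follows essentially the same inductive strategy as the paper: test the weak form against a cut-off version of $\Psi_n\,|x|^{2\ell}$ (with $\ell$ chosen one step above the inductive hypothesis), control the cross-terms, and convert the resulting $V$-weighted bound into an $|x|^{2\ell+2}$-weighted one via the comparison between $V$ and $|x|^2$ outside $B_{2R}(0)$; the base case via \eqref{V1} is handled identically. The one technical difference is how the main cross-term $\int \langle\nabla\Psi_n,x\rangle\,\Psi_n\,|x|^{2\ell-2}\,dx$ is treated: you integrate it by parts, writing $2\Psi_n\nabla\Psi_n\cdot x=\nabla(\Psi_n^2)\cdot x$ and using $\mathrm{div}(x\,|x|^{2\ell-2})=(N+2\ell-2)\,|x|^{2\ell-2}$, which produces the exact Rellich--Pohozaev-type identity
\[
\int_{\mathbb{R}^N}|\nabla\Psi_n|^2|x|^{2\ell}\,dx+\int_{\mathbb{R}^N}V\,|\Psi_n|^2|x|^{2\ell}\,dx=\lambda_n(V)\int_{\mathbb{R}^N}|\Psi_n|^2|x|^{2\ell}\,dx+\ell\,(N+2\ell-2)\int_{\mathbb{R}^N}|\Psi_n|^2|x|^{2\ell-2}\,dx,
\]
whereas the paper simply applies Young's inequality with $\delta=1/4$ to that term and absorbs the piece $\delta\int|\nabla\Psi_n|^2|x|^{2\ell}\eta_M^2\,dx$ into the left-hand side, yielding a one-sided estimate with slightly larger constants. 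Both are equally valid; your variant is arguably a touch cleaner because it isolates the exact coefficient $\ell(N+2\ell-2)$, but it requires the extra divergence-theorem step, which you correctly note must itself be done with the cut-off $\eta_M$ in place (and which introduces one additional remainder term $\int\Psi_n^2\,x\cdot\nabla\eta_M\,|x|^{2\ell-2}\eta_M\,dx$, also controlled by the inductive hypothesis). Your justification of the passage $M\to+\infty$ and the final conversion of $\int V|\Psi_n|^2|x|^{2\ell}\,dx$ into $\int|\Psi_n|^2|x|^{2\ell+2}\,dx$ using \eqref{lowerbound} are both sound, and amount to the same computation the paper performs by invoking Lemma~\ref{lm:V} with $\phi=\Psi_n\,|x|^{\ell+1}$.
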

\begin{proof}
We observe at first that the statement is true for $k=0$, i.e. we have
\[
\int_{\mathbb{R}^N} |\nabla\Psi_n|^2\,dx+\int_{\mathbb{R}^N} |\Psi_n|^2\,|x|^{2}\,dx\le C_{N,n,R}.
\]
Indeed, we have
\[
\int_{\mathbb{R}^N} |\Psi_n|^2\,|x|^2\,dx<+\infty,
\]
directly from Lemma \ref{lm:V}. More precisely, from \eqref{V1} with $\sigma=2$ we have 
\[
\begin{split}
\int_{\mathbb{R}^N} |x|^2\,|\Psi_n|^2\,dx&\le 4\,R^2\,\int_{B_{2 R}(0)} |\Psi_n|^2\,dx+
4\,
\int_{\mathbb{R}^N\setminus B_{2 R}(0)}V\,|\Psi_n|^2\,dx\\
&\le 4\,R^2+\,4\,\lambda_n(V).
\end{split}
\]
On account of Lemma \ref{lm:stimarozza}, this gives
\begin{equation}
\label{momentosecondo}
\int_{\mathbb{R}^N} |\Psi_n|^2\,|x|^2\,dx\le 4\,R^2+\,4\,\lambda_n.
\end{equation}
In order to conclude, we will prove the following recursive gain of weighted integrability
\begin{equation}
\label{schemapesi}
\begin{array}{c}
\mbox{\it if we have }\quad \displaystyle\int_{\mathbb{R}^N} |\nabla\Psi_n|^2\,|x|^{2\,k}\,dx+\int_{\mathbb{R}^N} |\Psi_n|^2\,|x|^{2\,k+2}\,dx\le C_{N,n,k,R},\\
\mbox{\it then we have }\quad \displaystyle\int_{\mathbb{R}^N} |\nabla\Psi_n|^2\,|x|^{2\,k+2}\,dx+\int_{\mathbb{R}^N} |\Psi_n|^2\,|x|^{2\,k+4}\,dx\le C_{N,n,k+1,R}\quad \mbox{\it as well}.
\end{array}
\end{equation}
This will be sufficient to get the claimed result. Let us suppose that for a $k\in\mathbb{N}$ we have
\[
\int_{\mathbb{R}^N} |\nabla\Psi_n|^2\,|x|^{2\,k}\,dx+\int_{\mathbb{R}^N} |\Psi_n|^2\,|x|^{2\,k+2}\,dx\le C_{N,n,k,R}.
\]
Let $M>0$ and let $\eta_M$ be a Lipschitz cut-off function such that
\[
0\le \eta_M\le 1,\quad \eta_M\equiv 1 \mbox{ on }B_M(0),\quad \eta_M\equiv 0 \mbox{ on } \mathbb{R}^N\setminus B_{M+1}(0),
\]
and
\[
|\nabla \eta_M|\le 1.
\]
We then take the test function
\[
\varphi=\Psi_n\,|x|^{2\,k+2}\,\eta_M^2,
\]
in the weak formulation. Observe that this is feasible, thanks to the properties of both $\Psi_n$ and $\eta_M$. We get
\[
\begin{split}
\int_{\mathbb{R}^N} |\nabla\Psi_n|^2\,|x|^{2\,k+2}\,\eta_M^2\,dx&+\int_{\mathbb{R}^N} V\,|\Psi_n|^2\,|x|^{2\,k+2}\,\eta_M^2\,dx\\&=\lambda_n(V)\,\int_{\mathbb{R}^N} |\Psi_n|^2\,|x|^{2\,k+2}\,\eta_M^2\,dx\\
&-(2\,k+2)\,\int_{\mathbb{R}^N} \langle \nabla\Psi_n,x\rangle\,\Psi_n\,|x|^{2\,k}\,\eta_M^2\,dx\\
&-2\int_{\mathbb{R}^N} \langle \nabla\Psi_n,\nabla \eta_M\rangle\,\Psi_n\,|x|^{2\,k+2}\,\eta_M\,dx.
\end{split}
\]
By using Young's inequality on the last two integrals, for every $M>0$ and $\delta>0$ we get
\[
\begin{split}
\int_{\mathbb{R}^N} |\nabla\Psi_n|^2\,|x|^{2\,k+2}\,\eta_M^2\,dx+\int_{\mathbb{R}^N} V\,|\Psi_n|^2\,|x|^{2\,k+2}\,\eta_M^2\,dx&\le \lambda_n(V)\,\int_{\mathbb{R}^N} |\Psi_n|^2\,|x|^{2\,k+2}\,\eta_M^2\,dx\\
&+\delta\,\int_{\mathbb{R}^N} |\nabla\Psi_n|^2\,|x|^{2\,k+2}\,\eta_M^2\,dx\\
&+\frac{(k+1)^2}{\delta}\,\int_{\mathbb{R}^N} |\Psi_n|^2\,|x|^{2\,k}\,\eta_M^2\,dx\\
&+\delta\,\int_{\mathbb{R}^N} |\nabla\Psi_n|^2\,|x|^{2\,k+2}\,\eta_M^2\,dx\\
&+\frac{1}{\delta}\,\int_{\mathbb{R}^N} |\nabla\eta_M|^2\,|x|^{2\,k+2}\,|\Psi_n|^2\,dx.
\end{split}
\]
We can take $\delta=1/4$ so to absorb the two integrals in the right-hand side containing $\nabla\Psi_n$. This gives
\[
\begin{split}
\frac{1}{2}\,\int_{\mathbb{R}^N} |\nabla\Psi_n|^2\,|x|^{2\,k+2}\,\eta_M^2\,dx+\int_{\mathbb{R}^N} V\,|\Psi_n|^2\,|x|^{2\,k+2}\,\eta_M^2\,dx&\le \lambda_n(V)\,\int_{\mathbb{R}^N} |\Psi_n|^2\,|x|^{2\,k+2}\,\eta_M^2\,dx\\
&+4\,(k+1)^2\,\int_{\mathbb{R}^N} |\Psi_n|^2\,|x|^{2\,k}\,\eta_M^2\,dx\\
&+4\,\int_{\mathbb{R}^N} |\nabla\eta_M|^2\,|x|^{2\,k+2}\,|\Psi_n|^2\,dx.
\end{split}
\]
Thanks to the properties of $\eta_M$, this implies that 
\begin{equation}
\label{letturattenta}
\begin{split}
\frac{1}{2}\,\int_{B_M(0)} |\nabla\Psi_n|^2\,|x|^{2\,k+2}\,dx&+\int_{B_M(0)} V\,|\Psi_n|^2\,|x|^{2\,k+2}\,dx\\
&\le \left(\lambda_n+4\right)\,\int_{\mathbb{R}^N} |\Psi_n|^2\,|x|^{2\,k+2}\,dx\\
&+4\,(k+1)^2\,\int_{\mathbb{R}^N} |\Psi_n|^2\,|x|^{2\,k}\,dx.
\end{split}
\end{equation}
We used again Lemma \ref{lm:stimarozza}, to bound the eigenvalue.
We observe that by H\"older's inequality and the fact that $\Psi_n$ has unit $L^2$ norm, we obtain
\[
\int_{\mathbb{R}^N} |\Psi_n|^2\,|x|^{2\,k}\,dx\le \left(\int_{\mathbb{R}^N} |\Psi_n|^2\,|x|^{2\,k+2}\,dx\right)^\frac{k}{k+1}\le \left(C_{N,n,k,R}\right)^\frac{k}{k+1}.
\]
By letting $M$ go to $+\infty$ in \eqref{letturattenta} and using Lemma \ref{lm:V} for $\phi=\Psi_n\,|x|^{k+1}$, we prove \eqref{schemapesi}. As already explained, this concludes the proof. 
\end{proof}
As a consequence of the previous result, we also get the following
\begin{coro}
\label{coro:perognik}
For every $n\in\mathbb{N}^*$ and every $2\le p<\infty$ we have 
\[
V\,\Psi_n\in L^p(\mathbb{R}^N).
\]
More precisely, there exists a constant $C=C(N,n,R,p)>0$ such that
\[
\int_{\mathbb{R}^N} V^p\,|\Psi_n|^p\,dx\le C,\qquad \text{for every}\ R\ge 0.
\]
\end{coro}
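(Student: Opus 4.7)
The plan is to combine three previously established facts: the pointwise upper bound on $V$ coming from \eqref{uppEstVSigma}, the global $L^\infty$ bound on the eigenstate from Proposition \ref{prop:moser1}, and the weighted $L^2$ integrability of Proposition \ref{prop:perognik}. The strategy is to reduce the $L^p$ integrability of $V\,\Psi_n$ to a weighted $L^2$ estimate on $\Psi_n$ against the polynomial weight $|x|^{2p}$.

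First, I would apply \eqref{uppEstVSigma} to any $\overline{x}\in\Sigma$ and use $|\overline{x}|\le R$ to obtain the crude pointwise bound $V(x)\le 2\,|x|^2+2\,R^2$ on all of $\mathbb{R}^N$, hence $V(x)^p \le C_p\,(|x|^{2p}+R^{2p})$. Second, since $p\ge 2$ and $\Psi_n\in L^\infty(\mathbb{R}^N)$ by Proposition \ref{prop:moser1}, I would write $|\Psi_n|^p\le \|\Psi_n\|_{L^\infty(\mathbb{R}^N)}^{p-2}\,|\Psi_n|^2$, so that
\[
\int_{\mathbb{R}^N} V^p\,|\Psi_n|^p\,dx \le C_p\,\|\Psi_n\|_{L^\infty(\mathbb{R}^N)}^{p-2}\,\left(R^{2p}+\int_{\mathbb{R}^N} |x|^{2p}\,|\Psi_n|^2\,dx\right).
\]
The $L^\infty$ factor is finite (and explicitly controlled in terms of $\lambda_n(V)$, hence of $n$, $N$ and $R$).

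The remaining task is to bound the weighted second moment $\int |x|^{2p}\,|\Psi_n|^2\,dx$ for a real exponent $p\ge 2$. Proposition \ref{prop:perognik} yields this directly only when $p$ is an integer, so I would close the gap by interpolation: choose an integer $k\in\mathbb{N}$ with $k+1\ge p$, set $\alpha=p/(k+1)\in(0,1]$, and apply H\"older's inequality with exponents $1/\alpha$ and $1/(1-\alpha)$, together with $\|\Psi_n\|_{L^2(\mathbb{R}^N)}=1$:
\[
\int_{\mathbb{R}^N} |x|^{2p}\,|\Psi_n|^2\,dx = \int_{\mathbb{R}^N}\!\bigl(|x|^{2(k+1)}\,|\Psi_n|^2\bigr)^{\alpha}\,|\Psi_n|^{2(1-\alpha)}\,dx \le \left(\int_{\mathbb{R}^N} |x|^{2(k+1)}\,|\Psi_n|^2\,dx\right)^{\alpha}.
\]
The right-hand side is finite by Proposition \ref{prop:perognik} applied with exponent $k$, giving the desired constant $C=C(N,n,R,p)$.

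The argument is essentially routine once the previous two propositions are available; the only delicate bookkeeping is the interpolation step used to pass from integer $k$ to real $p$, and keeping the constants expressed through $R$, $n$, $p$, and $N$ so that the dependence announced in the statement is transparent.
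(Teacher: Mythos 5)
Your argument is correct and follows essentially the same route as the paper: pull out $\|\Psi_n\|_{L^\infty}^{p-2}$, bound $V(x)$ polynomially in $|x|$ via \eqref{uppEstVSigma}, and invoke Proposition \ref{prop:perognik} on the resulting weighted second moment. The only genuine addition is your H\"older interpolation to pass from the integer exponent $k$ of Proposition \ref{prop:perognik} to an arbitrary real $p\ge 2$, which is a small detail the paper's proof leaves implicit; this is a welcome clarification rather than a different approach.
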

\begin{proof}
For every $2\le p<+\infty$, we have 
\[
\int_{\mathbb{R}^N} V^p\,|\Psi_n|^p\,dx\le \|\Psi_n\|_{L^\infty(\mathbb{R}^N)}^{p-2}\,\int_{\mathbb{R}^N} |V|^p\,|\Psi_n|^2\,dx.
\]
By using \eqref{uppEstVSigma} and Remark \ref{rem:uniformLinfty}, we obtain 
\[
\int_{\mathbb{R}^N} V^p\,|\Psi_n|^p\,dx\le \left(\mathrm{M}_N\,\lambda_n^\frac{N}{4}\right)^{p-2}\,\int_{\mathbb{R}^N} (|x|+R)^{2\,p}\,|\Psi_n|^2\,dx.
\]
The last term can be bounded from above, thanks to Proposition \ref{prop:perognik}.
\end{proof}

\section{Higher regularity of eigenstates}
\label{sec:6}

The following simple result will be needed in order to guarantee that a certain test function will be admissible.
\begin{lm}
\label{lm:traslata}
%Let $\phi\in L^2_{\rm loc}(\mathbb{R}^N)\cap L^2(\mathbb{R}^N;V)$. Then, 
For every $\mathbf{h}\in\mathbb{R}^N$ we have 
\[
L^2_{\rm loc}(\mathbb{R}^N)\cap L^2(\mathbb{R}^N;V)=L^2_{\rm loc}(\mathbb{R}^N)\cap L^2(\mathbb{R}^N;\tau_\mathbf{h} V),
\]
and 
\[
H^1(\mathbb{R}^N;V)=H^1(\mathbb{R}^N;\tau_\mathbf{h}V).
\]
\end{lm}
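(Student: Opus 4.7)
The plan is to reduce both equalities to the content of Lemma \ref{lm:V} (together with the Hilbert space structure from Lemma \ref{lm:hilbert}) via the simple observation that a translation of our potential is again a potential of the same class.

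More precisely, first I would note that
\[
\tau_\mathbf{h} V(x) = V(x+\mathbf{h}) = \bigl(\mathrm{dist}(x+\mathbf{h},\Sigma)\bigr)^2 = \bigl(\mathrm{dist}(x,\Sigma-\mathbf{h})\bigr)^2 = V_{\Sigma-\mathbf{h}}(x),
\]
where $\Sigma-\mathbf{h}=\{y-\mathbf{h}\,:\,y\in\Sigma\}$ is still a non-empty compact subset of $\mathbb{R}^N$. Thus $\tau_\mathbf{h} V$ belongs to exactly the same class of confining potentials as $V$, with associated geometric parameter $R_{\Sigma-\mathbf{h}}\le R_\Sigma+|\mathbf{h}|<+\infty$.

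For the first identity, I would simply apply Lemma \ref{lm:V} twice: once to $V=V_\Sigma$ and once to $\tau_\mathbf{h} V=V_{\Sigma-\mathbf{h}}$, obtaining
\[
L^2_{\rm loc}(\mathbb{R}^N)\cap L^2(\mathbb{R}^N;V)=L^2_{\rm loc}(\mathbb{R}^N)\cap L^2(\mathbb{R}^N;|x|^2)=L^2_{\rm loc}(\mathbb{R}^N)\cap L^2(\mathbb{R}^N;\tau_\mathbf{h} V),
\]
so that the two weighted $L^2$ spaces coincide as sets. For the second identity, since by definition $H^1(\mathbb{R}^N;V)=H^1(\mathbb{R}^N)\cap L^2(\mathbb{R}^N;V)$ and analogously for $\tau_\mathbf{h} V$, the set-theoretic equality $H^1(\mathbb{R}^N;V)=H^1(\mathbb{R}^N;\tau_\mathbf{h} V)$ follows immediately by intersecting the previous identity with $H^1(\mathbb{R}^N)$.

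There is no real obstacle here: the statement is essentially a rephrasing of the fact established in Lemma \ref{lm:V}, namely that the weighted space depends only on the quadratic-type growth at infinity and not on the specific shape or location of the potential well. The only thing to make explicit is that a translation preserves both the local square-integrability and the compactness of $\Sigma$, which is immediate.
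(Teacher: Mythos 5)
Your proof is correct and follows essentially the same route as the paper's: both rest on the observation that $\tau_{\mathbf{h}}V = V_{\Sigma-\mathbf{h}}$ is again a potential of the admissible class, after which the two identities follow by invoking Lemma \ref{lm:V} (the paper also cites Lemma \ref{lm:hilbert} for the $H^1$ identity, whereas you obtain it directly by intersecting the first identity with $H^1(\mathbb{R}^N)$, a harmless variation). No gaps.
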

\begin{proof}
It is sufficient to observe that 
\[
\tau_\mathbf{h} V(x)=\Big(\mathrm{dist}(x+\mathbf{h},\Sigma)\Big)^2=\Big(\mathrm{dist}(x,\Sigma-\mathbf{h})\Big)^2.
\]
Thus the potential $\tau_\mathbf{h}V$ still belongs to the same class under consideration.
In light of Lemma \ref{lm:V}, this is enough to get the first equality. The second one follows by using the same observation and Lemma \ref{lm:hilbert}.
\end{proof}
Our eigenstates belong to a higher order Sobolev space. This is the content of the following slightly more general result.
\begin{theorem}
\label{thm:domain}
For $f\in L^2(\mathbb{R}^N)$, let $\Phi\in H^1(\mathbb{R}^N;V)$ be the weak solution of
\[
\mathcal{H}_V[\Phi]=f.
\]
Then we have 
\[
\Phi\in H^2(\mathbb{R}^N) \cap \Big\{\phi\in L^2(\mathbb{R}^N)\, :\, V\,\phi\in L^2(\mathbb{R}^N)\Big\}.
\]
In particular, the domain of $\mathcal{H}_V$ is given by
\[
\mathfrak{D}(\mathcal{H}_V)= H^2(\mathbb{R}^N) \cap \Big\{\phi\in L^2(\mathbb{R}^N)\, :\, V\,\phi\in L^2(\mathbb{R}^N)\Big\}.
\]
%For every $n\in\mathbb{N}^*$ we have 
%\[
%\Psi_n\in H^2(\mathbb{R}^N).
%\]
%Moreover, the following estimate holds
%\[
%\int_{\mathbb{R}^N} |\nabla (\partial_{x_k} \Psi_n)|^2\,dx\le \int_{\mathbb{R}^N}|V|^2\,|\Psi_n|^2\,dx+2\,\lambda_n(V)\,\int_{\mathbb{R}^N} |\nabla \Psi_n|^2\,dx,\qquad \text{for}\ k=1,\dots,N.
%\]
\end{theorem}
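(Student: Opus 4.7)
The plan is to reduce the theorem to the single integrability statement $V\Phi \in L^2(\mathbb{R}^N)$. Once this is established, the weak equation rewrites as
\[
-\Delta \Phi = f - V\,\Phi \in L^2(\mathbb{R}^N),
\]
and since this holds on all of $\mathbb{R}^N$, a classical Fourier (or Calder\'on--Zygmund) argument gives $\Phi \in H^2(\mathbb{R}^N)$: indeed, $\Phi \in L^2$ and $(1-\Delta)\Phi \in L^2$ implies that every partial derivative of order $\le 2$ is in $L^2$. So the entire difficulty is concentrated in proving the weighted integrability of $\Phi$.

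For that weighted integrability, the natural idea would be to test the weak formulation with $\psi = V\,\Phi$ itself; but a priori we only know $V\Phi \in L^1_{\mathrm{loc}}$, not that $\psi \in H^1(\mathbb{R}^N;V)$, so the test is not legitimate. I would bypass this by truncation: for $n \in \mathbb{N}^*$, set $V_n := \min\{V,n\}$, which is bounded and globally Lipschitz, so that $V_n\,\Phi \in H^1(\mathbb{R}^N;V)$ is an admissible test function whenever $\Phi$ is. The geometric fact that makes the argument work is the pointwise bound
\[
|\nabla V| \le 2\sqrt{V} \quad \text{a.e. on } \mathbb{R}^N,
\]
which is a direct consequence of $V = \mathrm{dist}(\cdot,\Sigma)^2$ and the $1$-Lipschitz character of the distance function; it yields $|\nabla V_n| \le 2\sqrt{V_n}$ a.e.\ as well.

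Inserting $\psi = V_n\,\Phi$ into the weak formulation produces
\[
\int V_n\,|\nabla \Phi|^2\,dx + \int \Phi\,\langle \nabla \Phi,\nabla V_n\rangle\,dx + \int V\,V_n\,|\Phi|^2\,dx = \int f\,V_n\,\Phi\,dx.
\]
The cross term is handled by Young's inequality combined with $|\nabla V_n| \le 2\sqrt{V_n}$, absorbing a small fraction of $\int V_n|\nabla\Phi|^2$ and leaving $\int |\Phi|^2$, which is finite. The right-hand side is handled with Young and the elementary inequality $V_n^2 \le V\,V_n$: this produces a small fraction of $\int V\,V_n\,|\Phi|^2$ that can also be absorbed into the left-hand side, plus $\|f\|_{L^2}^2$. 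After absorption one obtains the estimate
\[
\int V\,V_n\,|\Phi|^2\,dx \le C\,(\|f\|_{L^2}^2 + \|\Phi\|_V^2),
\]
with $C$ independent of $n$. Monotone convergence as $n \to \infty$ then gives $V\Phi \in L^2(\mathbb{R}^N)$.

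With both $\Phi \in H^2(\mathbb{R}^N)$ and $V\Phi \in L^2(\mathbb{R}^N)$ in hand, the inclusion $\mathfrak{D}(\mathcal{H}_V) \subseteq H^2(\mathbb{R}^N) \cap \{\phi : V\phi \in L^2(\mathbb{R}^N)\}$ is the content of what has just been shown. The reverse inclusion is immediate: for $\phi \in H^2 \cap \{V\phi \in L^2\}$, Cauchy--Schwarz gives $\int V\,|\phi|^2\,dx \le \|V\phi\|_{L^2}\,\|\phi\|_{L^2} < \infty$, so $\phi \in H^1(\mathbb{R}^N;V)$, and by construction $-\Delta\phi + V\phi \in L^2$. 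The main technical obstacle is the very first step: one must do the truncation carefully, because without the sharp bound $|\nabla V| \le 2\sqrt{V}$ the cross term $\int \Phi\,\langle\nabla\Phi,\nabla V_n\rangle$ would not be absorbable, and the uniform control in $n$ would break down.
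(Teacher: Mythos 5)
Your proposal is correct, and it deviates from the paper's proof at both of the two substantive steps, in each case along a cleaner route.

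For the weighted integrability $V\Phi\in L^2(\mathbb{R}^N)$, the paper does not test against $V_n\Phi$. Instead it tests against $\varphi=\Phi\,|x|^2\,\eta_M^2$ with a cut-off $\eta_M$, proves $\int |x|^4|\Phi|^2\,dx<+\infty$, and then invokes the pointwise comparison $V(x)\lesssim |x|^2$ away from the origin (Lemma \ref{lm:negative}) to transfer this to $\int V^2|\Phi|^2\,dx<\infty$. Your argument is more intrinsic: it tests directly against the truncation $V_n\Phi$ and uses the sharp gradient bound $|\nabla V|\le 2\sqrt{V}$, which is available precisely because $V=\mathrm{dist}(\cdot,\Sigma)^2$ and the distance function is $1$-Lipschitz. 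This observation does not appear in the paper, and it is exactly what allows the cross term $\int \Phi\,\langle\nabla\Phi,\nabla V_n\rangle\,dx$ to be absorbed uniformly in $n$; your bookkeeping (Young with $V_n^2\le V\,V_n$, then monotone convergence) is correct and gives $\int V^2|\Phi|^2\,dx\le \|f\|_{L^2}^2+4\|\Phi\|_{L^2}^2$. What the paper's route buys instead is that the weighted bound in $|x|^2$ is needed elsewhere anyway (Proposition \ref{prop:perognik}), so the two proofs share a template; your route buys a constant that does not see $R$ at all.

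For the $H^2$ regularity, the paper runs the Nirenberg--Stampacchia difference-quotient argument, together with Lemma \ref{lm:traslata} to ensure the translated test functions stay in the form domain. You simply note that $\Phi\in L^2$ and $-\Delta\Phi=f-V\Phi\in L^2$ on the whole space, so $(1+|\xi|^2)\widehat\Phi\in L^2$, hence all derivatives up to order two are in $L^2$. Since the equation holds on all of $\mathbb{R}^N$ (no boundary), the Fourier argument is perfectly valid and is strictly simpler than the difference-quotient method; the paper's more hands-on approach would be the one that generalizes to domains or variable coefficients. The identification of $\mathfrak{D}(\mathcal{H}_V)$ at the end is the same in both proofs, via the Cauchy--Schwarz observation that $V\phi\in L^2$ and $\phi\in L^2$ imply $\phi\in L^2(\mathbb{R}^N;V)$.

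One small point worth stating explicitly if you write this up: the admissibility of $\psi=V_n\Phi$ as a test function rests on the density statement of Lemma \ref{lm:hilbert} (the weak formulation, defined a priori against $C^\infty_c$, extends to all of $H^1(\mathbb{R}^N;V)$), plus the elementary checks that $V_n\nabla\Phi$, $\Phi\nabla V_n$ and $\sqrt{V}\,V_n\Phi$ all lie in $L^2$; you gesture at this but it deserves a sentence.
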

\begin{proof}
We know that $\Phi\in H^1(\mathbb{R}^N;V)\subseteq L^2(\mathbb{R}^N)$, thanks to Lemma \ref{lm:hilbert}. We first need to prove that 
\[
\int_{\mathbb{R}^N} V^2\,|\Phi|^2\,dx<+\infty.
\]
In light of Lemma \ref{lm:negative}, it is sufficient to prove that 
\begin{equation}
\label{spocchia}
\int_{\mathbb{R}^N} |x|^4\,|\Phi|^2\,dx<+\infty.
\end{equation}
This can be proved by repeating almost verbatim the argument of the proof of Proposition \ref{prop:perognik}: we only need to treat more carefully the right-hand side. As before, let $M>0$ and let $\eta_M$ be a Lipschitz cut-off function such that
\[
0\le \eta_M\le 1,\quad \eta_M\equiv 1 \mbox{ on }B_M(0),\quad \eta_M\equiv 0 \mbox{ on } \mathbb{R}^N\setminus B_{M+1}(0),
\]
and
\[
|\nabla \eta_M|\le 1.
\]
Upon testing the equation with $\varphi=\Phi\,|x|^2\,\eta_M^2$, we get
\[
\begin{split}
\int_{\mathbb{R}^N} |\nabla\Phi|^2\,|x|^2\,\eta_M^2\,dx+\int_{\mathbb{R}^N} V\,|\Phi|^2\,|x|^2\,\eta_M^2\,dx&=\int_{\mathbb{R}^N} f\,\Phi\,|x|^2\,\eta_M^2\,dx\\
&-2\,\int_{\mathbb{R}^N} \langle \nabla\Phi,x\rangle\,\Phi\,\eta_M^2\,dx\\
&-2\,\int_{\mathbb{R}^N} \langle \nabla\Phi,\nabla \eta_M\rangle\,\Phi\,|x|^2\,\eta_M\,dx.
\end{split}
\]
By using Young's inequality on the right-hand side and \eqref{Vepsilon} with $\varepsilon=1$ on the left-hand side, for every $M>0$ and $\delta>0$ we get
\[
\begin{split}
\int_{\mathbb{R}^N} |\nabla\Phi|^2\,|x|^2\,\eta_M^2\,dx+\frac{1}{2}\int_{\mathbb{R}^N} |\Phi|^2\,|x|^4\,\eta_M^2\,dx&\le R^2\,\int_{\mathbb{R}^N}|x|^2\, |\Phi|^2\,\eta_M^2\,dx+
\frac{1}{2\,\delta}\,\int_{\mathbb{R}^N} |f|^2\,\eta_M^2\,dx\\
&+\frac{\delta}{2}\,\int_{\mathbb{R}^N} |\Phi|^2\,|x|^4\,\eta_M^2\,dx\\
&+\delta\,\int_{\mathbb{R}^N} |\nabla\Phi|^2\,|x|^2\,\eta_M^2\,dx+\frac{1}{\delta}\,\int_{\mathbb{R}^N} |\Phi|^2\,\eta_M^2\,dx\\
&+\delta\,\int_{\mathbb{R}^N} |\nabla\Phi|^2\,|x|^2\,\eta_M^2\,dx\\
&+\frac{1}{\delta}\,\int_{\mathbb{R}^N} |\nabla\eta_M|^2\,|x|^2\,|\Phi|^2\,dx.
\end{split}
\]
If we now take $\delta=1/4$, we can absorb the two integrals in the right-hand side containing $\nabla\Phi$, as well as the term $|\Phi|^2\,|x|^4$. This gives
\[
\begin{split}
\frac{1}{2}\,\int_{\mathbb{R}^N} |\nabla\Phi|^2\,|x|^2\,\eta_M^2\,dx+\frac{3}{8}\,\int_{\mathbb{R}^N} |\Phi|^2\,|x|^4\,\eta_M^2\,dx&\le 
R^2\,\int_{\mathbb{R}^N}|x|^2\, |\Phi|^2\,\eta_M^2\,dx+
2\,\int_{\mathbb{R}^N} |f|^2\,\eta_M^2\,dx\\
&+4\,\int_{\mathbb{R}^N} |\Phi|^2\,\eta_M^2\,dx\\
&+4\,\int_{\mathbb{R}^N} |\nabla\eta_M|^2\,|x|^2\,|\Phi|^2\,dx.
\end{split}
\]
Thanks to the properties of $\eta_M$, this implies that 
\[
\begin{split}
\frac{1}{2}\,\int_{B_M(0)} |\nabla\Phi|^2\,|x|^2\,dx+\frac{3}{8}\,\int_{B_M(0)} |\Phi|^2\,|x|^4\,dx&\le 
(4+R^2)\,\int_{\mathbb{R}^N}|x|^2\, |\Phi|^2\,dx+
2\,\int_{\mathbb{R}^N} |f|^2\,dx\\
&+4\,\int_{\mathbb{R}^N} |\Phi|^2\,dx.
\end{split}
\]
By letting $M$ go to $+\infty$ and recalling Lemma \ref{lm:V}, we get \eqref{spocchia}.
\par
In order to prove the $H^2$ regularity, we use the classical Nirenberg-Stampacchia method, based on differentiating in a discrete sense the equation. We set 
\[
F=f-V\,\Phi,
\]
thus the function $\Phi$ weakly solves the Poisson equation
\[
-\Delta \Phi=F\in L^2(\mathbb{R}^N),
\]
i.e.
\[
\int_{\mathbb{R}^N} \langle \nabla \Phi,\nabla\varphi\rangle\,dx=\int_{\mathbb{R}^N} F\,\varphi\,dx,\qquad \text{for every}\ \varphi\in H^1(\mathbb{R}^N;V).
\]
By a simple change of variable, for every $\mathbf{h}\in\mathbb{R}^N\setminus\{0\}$ we have that $\tau_\mathbf{h}\Phi$ satisfies
\[
\int_{\mathbb{R}^N} \langle \nabla (\tau_\mathbf{h}\Phi),\nabla\varphi\rangle\,dx=\int_{\mathbb{R}^N} \tau_\mathbf{h}F\,\varphi\,dx,\qquad \text{for every}\ \varphi\in H^1(\mathbb{R}^N;V).
\] 
Observe that we also used Lemma \ref{lm:traslata}, to ensure that $H^1(\mathbb{R}^N;\tau_\mathbf{h}V)=H^1(\mathbb{R}^N;V).$
We subtract from this equation the one satisfied by $\Phi$ and then take the test function $\varphi=\delta_\mathbf{h} \Phi$. 
This yields
\begin{equation}
\label{differential}
\int_{\mathbb{R}^N}|\delta_\mathbf{h} \nabla \Phi|^2\,dx=\int_{\mathbb{R}^N} \delta_\mathbf{h}F\,\delta_{\mathbf{h}}\Phi\,dx.
\end{equation}
We take $\mathbf{h}=h\,\mathbf{e}_k$ for $h\in\mathbb{R}\setminus\{0\}$ and $k\in\{1,\dots,N\}$, then we use the following semi-discrete integration by parts formula 
\[
\int_{\mathbb{R}^N} \delta_{h\,\mathbf{e}_k} F\,\delta_{h\,\mathbf{e}_k} \Phi\,dx=-h\,\int_{\mathbb{R}^N} \frac{\partial}{\partial x_k}\delta_{h\,\mathbf{e}_k} \Phi\,\left(\int_0^1 F(x+t\,h\,\mathbf{e}_k)\,dt\right)\,dx,
\]
see for example \cite[Lemma 4.5]{Lin}.
Let us set for simplicity 
\[
W(x)=\int_0^1 F(x+t\,h\,\mathbf{e}_k)\,dt,
\]
by using that $F\in L^2(\mathbb{R}^N)$ we get that $W\in L^2(\mathbb{R}^N)$, as well. Moreover, by Jensen's inequality and the translation invariance of $L^p$ norms, we have
\[
\begin{split}
\|W\|^2_{L^2(\mathbb{R}^N)}&=\int_{\mathbb{R}^N}\left|\int_0^1 F(x+t\,h\,\mathbf{e}_k)\,dt\right|^2\,dx\\
&\le \int_{\mathbb{R}^N}\int_0^1 |F(x+t\,h\,\mathbf{e}_k)|^2\,dt\,dx=\int_0^1\left(\int_{\mathbb{R}^N} |F(x+t\,h\,\mathbf{e}_k)|^2\,dx\right)\,dt=\|F\|_{L^2(\mathbb{R}^N)}^2.
\end{split}
\]
From equation \eqref{differential} with $\mathbf{h}=h\,\mathbf{e}_k$, we thus get
\[
\int_{\mathbb{R}^N}|\delta_{h\,\mathbf{e}_k} \nabla \Phi|^2\,dx=-h\,\int_{\mathbb{R}^N} W\,\delta_{h\,\mathbf{e}_k}\left(\frac{\partial \Phi}{\partial x_k}\right)\,dx.
\]
By applying H\"older's inequality, this in turn gives
\[
\int_{\mathbb{R}^N}|\delta_{h\,\mathbf{e}_k} \nabla \Phi|^2\,dx\le |h|\,\|W\|_{L^2(\mathbb{R}^N)}\,\left\|\delta_{h\,\mathbf{e}_k}\frac{\partial \Phi}{\partial x_k}\right\|_{L^2(\mathbb{R}^N)}.
\]
The Hessian term on the right-hand side can now be absorbed in the left-hand side: indeed, by Young's inequality, we have 
\begin{equation}
\label{differential2}
\int_{\mathbb{R}^N}|\delta_{h\,\mathbf{e}_k} \nabla \Phi|^2\,dx\le\frac{|h|^2}{2}\,\|W\|^2_{L^2(\mathbb{R}^N)}+\frac{1}{2}\,\left\|\delta_{h\,\mathbf{e}_k}\frac{\partial \Phi}{\partial x_k}\right\|_{L^2(\mathbb{R}^N)}^2,
\end{equation}
and then we notice that
\[
\begin{split}
\left\|\delta_{h\,\mathbf{e}_k}\frac{\partial \Phi}{\partial x_k}\right\|_{L^2(\mathbb{R}^N)}^2&=\int_{\mathbb{R}^N} \left|\frac{\partial \Phi}{\partial x_k}(x+h\,\mathbf{e}_k)-\frac{\partial \Phi}{\partial x_k}(x)\right|^2\,dx\\
&\le \int_{\mathbb{R}^N} \left|\nabla \Phi(x+h\,\mathbf{e}_k)-\nabla \Phi(x)\right|^2\,dx=\int_{\mathbb{R}^N}|\delta_{h\,\mathbf{e}_k} \nabla \Phi|^2\,dx.
\end{split}
\]
Thus, from \eqref{differential2} we get
\[
\int_{\mathbb{R}^N}|\delta_{h\,\mathbf{e}_k} \nabla \Phi|^2\,dx\le |h|^2\,\|W\|^2_{L^2(\mathbb{R}^N)}\le |h|^2\,\|F\|^2_{L^2(\mathbb{R}^N)}=|h|^2\,\|f-V\,\Phi\|_{L^2(\mathbb{R}^N)}.
\]
%The last term can be estimated by the classical inequality
%\[
%\int_{\mathbb{R}^N} |\delta_{h\,\mathbf{e}_k} \Psi_n|^2\,dx\le |h|^2\,\int_{\mathbb{R}^N}|\nabla \Psi_n|^2\,dx.
%\]
By dividing everything by $|h|^2$ and appealing to the characterization of Sobolev spaces in terms of finite differences (see for example \cite[Chapter 8]{Gi}), we get the desired regularity for $\Phi$.
\vskip.2cm\noindent
As for the domain of $\mathcal{H}_V$, the first part of the proof and Lemma \ref{lm:hilbert} give
\[
\mathfrak{D}(\mathcal{H}_V)=\Big\{\phi\in H^1(\mathbb{R}^N;V)\, :\, -\Delta \phi+V\, \phi\in L^2(\mathbb{R}^N)\Big\}\subseteq H^2(\mathbb{R}^N) \cap \Big\{\phi\in L^2(\mathbb{R}^N)\, :\, V\,\phi\in L^2(\mathbb{R}^N)\Big\}.
\]
In order to prove the reverse inclusion, it is sufficient to prove that
\[
\Big\{\phi\in L^2(\mathbb{R}^N)\, :\, V\,\phi\in L^2(\mathbb{R}^N)\Big\}\subseteq L^2(\mathbb{R}^N;V).
\]
This easily follows from H\"older's inequality
\[
\int_{\mathbb{R}^N} V\,|\phi|^2\,dx\le\left(\int_{\mathbb{R}^N}|\phi|^2\,dx\right)^\frac{1}{2}\,\left(\int_{\mathbb{R}^N}V^2\,|\phi|^2\,dx\right)^\frac{1}{2},
\]
thus concluding the proof.
\end{proof}
%\subsection{Regularity}
%In this section we prove a first regularity property of the Ground State.
As for classical regularity of eigenstates, we have the following result.
\begin{prop}
\label{prop:classical}
We have $\Psi_n\in C^{2,\alpha}_{\rm loc}(\mathbb{R}^N)$ for every $0<\alpha<1$. In particular, $\Psi_n$ solves the eigenvalue equation
\[
-\Delta \Psi_n+V\,\Psi_n=\lambda_n(V)\,\Psi_n,\qquad \text{in}\ \mathbb{R}^N,
\]
in classical sense. %Moreover, we have $u\not\in C^3_{\rm loc}(\mathbb{R}^N)$.
\end{prop}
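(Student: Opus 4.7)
The plan is to run a standard elliptic bootstrap, starting from the information we have already collected: $\Psi_n\in H^1(\mathbb{R}^N;V)\cap L^\infty(\mathbb{R}^N)$ from Proposition~\ref{prop:moser1}, and $\Psi_n\in H^2(\mathbb{R}^N)$ as a consequence of Theorem~\ref{thm:domain} (applied with $f=\lambda_n(V)\,\Psi_n\in L^2(\mathbb{R}^N)$). I will rewrite the eigenvalue equation in the Poisson form
\[
-\Delta \Psi_n = \bigl(\lambda_n(V)-V\bigr)\,\Psi_n \qquad \text{in } \mathcal{D}'(\mathbb{R}^N),
\]
and observe that, by Lemma~\ref{lm:negative}, the potential $V$ is locally Lipschitz (hence locally bounded and locally $C^{0,\alpha}$ for every $0<\alpha<1$). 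Combined with $\Psi_n\in L^\infty(\mathbb{R}^N)$, this shows that the right-hand side belongs to $L^\infty_{\rm loc}(\mathbb{R}^N)$.

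Next I would apply the interior Calder\'on--Zygmund $W^{2,p}$ estimates on arbitrary balls of $\mathbb{R}^N$: since $-\Delta \Psi_n\in L^p_{\rm loc}(\mathbb{R}^N)$ for every $1<p<\infty$, we obtain $\Psi_n\in W^{2,p}_{\rm loc}(\mathbb{R}^N)$ for every such $p$. Choosing $p>N$ and invoking the Morrey embedding then yields $\Psi_n\in C^{1,\alpha}_{\rm loc}(\mathbb{R}^N)$ for every $0<\alpha<1$.

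At this stage the right-hand side $(\lambda_n(V)-V)\,\Psi_n$ becomes locally $C^{0,\alpha}$ (product of a locally Lipschitz function and a $C^{1,\alpha}_{\rm loc}$ function). An application of the classical interior Schauder estimates to $-\Delta \Psi_n = (\lambda_n(V)-V)\,\Psi_n$ upgrades the regularity to $\Psi_n\in C^{2,\alpha}_{\rm loc}(\mathbb{R}^N)$ for every $0<\alpha<1$. Once this is known, the distributional equality $-\Delta \Psi_n+V\,\Psi_n=\lambda_n(V)\,\Psi_n$ holds pointwise, i.e. in the classical sense.

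Rather than an obstacle, the only subtle point I would be careful about is justifying that $V$ is locally Lipschitz on the whole of $\mathbb{R}^N$ (not only outside $\Sigma$), but this is built into the definition of $V_\Sigma$ as the square of a distance function and is already recorded in Lemma~\ref{lm:negative}. Everything else is a routine invocation of $W^{2,p}$ and $C^{2,\alpha}$ interior elliptic regularity, which is why the result follows with minimal work once Theorem~\ref{thm:domain} and Proposition~\ref{prop:moser1} are in place.
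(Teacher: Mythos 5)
Your bootstrap is correct and follows essentially the same chain as the paper's proof: rewrite the eigenvalue equation in Poisson form, apply interior Calder\'on--Zygmund estimates to get $W^{2,p}_{\rm loc}$ for all $p$, pass to $C^{1,\alpha}_{\rm loc}$ by Sobolev/Morrey embedding, and then conclude via Schauder since the right-hand side has become $C^{0,\alpha}_{\rm loc}$. The only (harmless) difference is that you use the local boundedness of $V$ together with $\Psi_n\in L^\infty(\mathbb{R}^N)$ to get $-\Delta\Psi_n\in L^\infty_{\rm loc}$, whereas the paper invokes Corollary~\ref{coro:perognik} to assert global $L^q$ integrability of $(\lambda_n(V)-V)\,\Psi_n$; for interior regularity your local observation is equally sufficient and slightly more economical, and the appeal to Theorem~\ref{thm:domain} for $H^2$ is not actually needed to start the iteration.
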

\begin{proof}
Let us set
\[
F=\lambda_n(V)\,\Psi_n-V\,\Psi_n.
\]
Thanks to Proposition \ref{prop:moser1} and Corollary \ref{coro:perognik}, we have $F\in L^q(\mathbb{R}^N)$ for every $2\le q< +\infty$. Thus we have
\[
-\Delta \Psi_n=F\in L^q(\mathbb{R}^N),\qquad \text{for every}\ 2\le q<+\infty.
\]
By the classical {\it Calder\'on-Zygmund estimates} (see for example \cite[Theorem 9.9]{GT}), this implies that $\Psi_n\in W^{2,p}_{\rm loc}(\mathbb{R}^N)$, for every $1<p<+\infty$. By the {\it Sobolev Embedding Theorem}, this in turn implies that $\Psi_n\in C^{1,\alpha}_{\rm loc}(\mathbb{R}^N)$, for every $0<\alpha<1$.
By using this regularity gain and recalling that $V$ is locally Lipschitz, we can then infer that
\[
-\Delta \Psi_n=F\in C^{0,\alpha}_{\rm loc}(\mathbb{R}^N),
\]
for every $0<\alpha<1$. An application of {\it Schauder's estimates} (see for example \cite[Corollary 6.3]{GT}) now gives the claimed regularity.
\end{proof}
\begin{rem}[Maximal regularity]
\label{oss:maxreg}
In general, we can not expect the eigenstates $\Psi_n$ to belong to $C^3$. Indeed, let us consider the positive ground state $\Psi_1$ and let us suppose that the potential $V$ is not $C^1$ at the origin: this happens for example when the potential well $\Sigma$ is given by the torus \eqref{TorusRr}. 
\par
We argue by contradiction and assume that $\Psi_1\in C^3_{\rm loc}(\mathbb{R}^N)$. In particular, the Laplacian $\Delta \Psi_1$ would be differentiable at $x=0$. Since $\Psi_1$ is a classical solution of the equation, we get the pointwise identity
\[
V=\lambda_1(V)+\frac{\Delta \Psi_1}{\Psi_1},
\]
where we also used that $\Psi_1>0$, by the minimum principle, as already observed. The previous identity would imply that $V$ is differentiable at the origin, while this is not the case. We thus obtain a contradiction.
\end{rem}

\section{Exponential decay at infinity}
\label{sec:7}

We will show exponential decay for the eigenstates, in the sup norm. This is quite a classical result (see for example \cite{Ag, HS, Sim}). Our proof is elementary and relies only on the weak form of the equation, in conjunction with suitable test function arguments. The result will follow by applying iterative arguments {\it \`a la} De Giorgi and Moser, thus the proof is genuinely nonlinear in nature.
\par
As a preliminary result, we start with an exponential decay in the $L^2$ norm. We pay due attention to the constants involved in the estimate.
\begin{lm}[Exponential decay in $L^2$]
\label{lm:expoL2}
There exist an exponent $\alpha=\alpha(N)>1$ and a constant
$C_1=C_1(N,n,R)>0$ such that
\[
\int_{\mathbb{R}^N\setminus B_\varrho(0)}|\Psi_n|^2\,dx
\le C_1\,e^{-\varrho\,\log\alpha}, \qquad \mbox{ for every } \varrho\ge 0.
\]
\end{lm}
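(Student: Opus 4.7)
The plan is to combine a Caccioppoli-type energy estimate on annular shells with a simple geometric iteration, exploiting the fact that far from the origin the confining potential $V$ strictly dominates the eigenvalue $\lambda_n(V)$.

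First, I would fix a threshold radius $\varrho_0=\varrho_0(N,n,R)$ large enough that $V(x)-\lambda_n(V)\ge 2$ on $\mathbb{R}^N\setminus B_{\varrho_0}(0)$. This is possible: the lower bound \eqref{lowEstVSigma} gives $V(x)\ge (|x|-R)_+^2$, while Lemma \ref{lm:stimarozza} provides the a priori upper bound $\lambda_n(V)\le \lambda_n$, where $\lambda_n$ is the $n$-th eigenvalue of the quantum harmonic oscillator. Hence $\varrho_0$ can be chosen depending only on $N$, $n$, and $R$.

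Second, for every integer $k\ge 1$ I would pick a Lipschitz cutoff $\eta_k$ with $0\le\eta_k\le 1$, $\eta_k\equiv 0$ on $B_{\varrho_0+k-1}(0)$, $\eta_k\equiv 1$ on $\mathbb{R}^N\setminus B_{\varrho_0+k}(0)$, and $|\nabla\eta_k|\le 1$. The test function $\Psi_n\,\eta_k^2$ belongs to $H^1(\mathbb{R}^N;V)$ by Lemma \ref{lm:hilbert}; inserting it into the weak formulation of the eigenvalue equation and applying Young's inequality to the cross term $\int \eta_k\,\Psi_n\,\langle\nabla\Psi_n,\nabla\eta_k\rangle\,dx$ yields the Caccioppoli inequality
\[
\frac{1}{2}\int_{\mathbb{R}^N}|\nabla\Psi_n|^2\,\eta_k^2\,dx+\int_{\mathbb{R}^N}\bigl(V-\lambda_n(V)\bigr)\,|\Psi_n|^2\,\eta_k^2\,dx\le 2\int_{\mathbb{R}^N}|\Psi_n|^2\,|\nabla\eta_k|^2\,dx.
\]

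Third, set $a_k:=\int_{\mathbb{R}^N\setminus B_{\varrho_0+k}(0)}|\Psi_n|^2\,dx$. By the choice of $\varrho_0$ the second term on the left is bounded below by $2\int|\Psi_n|^2\eta_k^2\,dx\ge 2\,a_k$, while the right-hand side is at most $2\,(a_{k-1}-a_k)$ since $\nabla\eta_k$ is supported in the annulus $B_{\varrho_0+k}(0)\setminus B_{\varrho_0+k-1}(0)$ where $\eta_k$ jumps from $0$ to $1$. Discarding the gradient term on the left produces $a_k\le \tfrac{1}{2}\,a_{k-1}$, so iterating gives $a_k\le 2^{-k}\,a_0\le 2^{-k}$, using $\|\Psi_n\|_{L^2}=1$. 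Translating back to the continuous parameter $\varrho$ (by taking $k=\lfloor \varrho-\varrho_0\rfloor$ when $\varrho\ge \varrho_0$, and using the trivial bound $a(\varrho)\le 1$ for $\varrho<\varrho_0$) yields the claimed exponential estimate with $\alpha=2$ and $C_1=2^{1+\varrho_0}$, which depends only on $N$, $n$, and $R$.

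The only real obstacle is bookkeeping: ensuring that $\varrho_0$, and consequently $C_1$, depend only on $N$, $n$, $R$. This hinges on Lemma \ref{lm:stimarozza}, which controls $\lambda_n(V)$ a priori in terms of the harmonic oscillator eigenvalue, independently of the specific potential well $\Sigma$.
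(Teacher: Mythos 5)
Your proposal is correct and takes essentially the same route as the paper: test the weak formulation with $\Psi_n\,\eta^2$ for a Lipschitz cutoff vanishing on a ball, extract a Caccioppoli-type inequality, and combine the lower bound $V\ge(|x|-R)_+^2$ with the a priori control $\lambda_n(V)\le\lambda_n$ from Lemma \ref{lm:stimarozza} to get a geometric contraction on consecutive annular masses. The paper's bookkeeping (Young parameter $1$, threshold $V\ge 2\lambda_n$) yields $\alpha=(2\lambda_1+1)/(\lambda_1+1)$ rather than your $\alpha=2$, but this is a cosmetic difference, not a different idea.
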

\begin{proof}
We first observe that it is sufficient to prove that there exist $\alpha=\alpha(N)>1$, $R_0=R_0(N,n,R)>0$ and 
$C=C(N,n,R)>0$ such that
\begin{equation}
\label{avanti!}
\int_{\mathbb{R}^N\setminus B_\varrho(0)}
|\Psi_n|^2\,dx\le C\,e^{-\varrho\,\log\alpha}, \qquad \mbox{for every}\ 
\varrho\ge R_0.
\end{equation}
Indeed, for $0\le \varrho<R_0$ we would trivially have
\[
\int_{\mathbb{R}^N\setminus B_\varrho(0)}
|\Psi_n|^2\,dx\le 1\le \frac{e^{-\varrho\,\log \alpha}}{e^{-R_0\,\log \alpha}},
\]
and thus the claimed estimate would follow by taking
\[
C_1=\max\left\{C,e^{R_0\,\log \alpha}\right\}.
\]
For every $\varrho>0$, we take the Lipschitz cuf-off function 
$\eta_\varrho$ such that
\[
\eta_\varrho \equiv 0 \mbox{ in } B_\varrho(0),\qquad \eta_
\varrho\equiv 1 \mbox{ in } \mathbb{R}^N\setminus B_{\varrho+1}(0),
\qquad 0\le \eta_\varrho\le 1,
\]
and
\[
|\nabla \eta_\varrho|=1\qquad \text{on}\ B_{\varrho+1}(0)
\setminus B_\varrho(0).
\]
We then insert in the weak formulation the test function 
$\varphi=\Psi_n\,\eta^2_\varrho$. We get
\begin{align*}
\int_{\mathbb{R}^N} |\nabla \Psi_n|^2\,\eta_\varrho^2
+\int_{\mathbb{R}^N} V\,|\Psi_n|^2\,\eta_\varrho^2\,dx
&=\lambda_n(V) \,\int_{\mathbb{R}^N} |\Psi_n|^2\,\eta_\varrho^2
\,dx-2\,\int_{\mathbb{R}^N} 
\langle\nabla \Psi_n ,\nabla \eta_\varrho\rangle
\,\eta_\varrho\,\Psi_n\,dx.
\end{align*}
On the last integral, we apply the Cauchy-Schwarz and Young 
inequalities, so to get
\[
-2\,\int_{\mathbb{R}^N} \langle\nabla\Psi_n, 
\nabla \eta_\varrho\rangle\, \eta_\varrho\,\Psi_n\,dx\le 
\int_{\mathbb{R}^N} |\nabla \Psi_n|^2\,\eta_\varrho^2
\,dx+\int_{\mathbb{R}^N} |\nabla\eta_\varrho|^2\,|\Psi_n|^2\,dx.
\]
By inserting this estimate in the identity above and canceling out the 
common factor, we get
\[
\int_{\mathbb{R}^N} V\,|\Psi_n|^2\,\eta_\varrho^2
\,dx\le \lambda_n(V)\,\int_{\mathbb{R}^N} |\Psi_n|^2\,\eta_\varrho^2\,dx
+\int_{\mathbb{R}^N} |\nabla\eta_\varrho|^2\,|\Psi_n|^2\,dx.
\]
In particular, by using the properties of $\eta_\varrho$, 
this entails that
\begin{equation}
\label{schemadecayL2}
\int_{\mathbb{R}^N\setminus B_{\varrho+1}(0)} V\,|\Psi_n|^2
\,dx\le \lambda_n(V)\,\int_{\mathbb{R}^N\setminus B_\varrho(0)} 
|\Psi_n|^2\,dx+\int_{B_{\varrho+1}(0)\setminus 
B_\varrho(0)} |\Psi_n|^2\,dx.
\end{equation}
On account of the confining property \eqref{confining}, we have
that there exists $R_0>0$ such that 
\[
\inf_{\mathbb{R}^N\setminus B_{\varrho+1}(0)} V\ge 2\,\lambda_n, \qquad \mbox{ for every } \varrho\ge R_0.
\]
More precisely, by recalling \eqref{lowEstVSigma}, we easily see that we can take 
\begin{equation}
\label{R0}
R_0=R_0(N,n,R):=\sqrt{2\,\lambda_n}+R.
\end{equation}
Thus, for every $\varrho\ge R_0$, from \eqref{schemadecayL2} and Lemma \ref{lm:stimarozza} we get
\[
\begin{split}
2\,\lambda_n\,\int_{\mathbb{R}^N\setminus B_{\varrho+1}(0)
}|\Psi_n|^2\,dx&\le \lambda_n\,\int_{\mathbb{R}^N\setminus B_
\varrho(0)} |\Psi_n|^2\,dx
+\int_{B_{\varrho+1}(0)\setminus B_\varrho(0)} |\Psi_n|^2\,dx.
\end{split}
\]
By decomposing 
\[
\int_{\mathbb{R}^N\setminus B_\varrho(0)} |\Psi_n|^2\,dx=
\int_{\mathbb{R}^N\setminus B_{\varrho+1}(0)} |\Psi_n|^2\,dx+
\int_{B_{\varrho+1}(0)\setminus B_\varrho(0)} |\Psi_n|^2\,dx,
\]
this can be recast into
\begin{equation}
\label{schemadecayL2b}
\int_{\mathbb{R}^N\setminus B_{\varrho+1}(0)} |\Psi_n|^2\,dx
\le \left(\frac{1}{\lambda_n}+1\right)\,\int_{B_{\varrho+1}(0)\setminus 
B_\varrho(0)} |\Psi_n|^2\,dx.
\end{equation}
We set for brevity
\[
m(\varrho)=\int_{\mathbb{R}^N\setminus B_{\varrho}(0)} 
|\Psi_n|^2\,dx\qquad \text{and}\qquad \Theta=\frac{1}{\lambda_n}+1.
\]
Accordingly, the estimate \eqref{schemadecayL2b} can be rewritten as
\[
m(\varrho+1)\le \Theta\,
\Big(m(\varrho)-m(\varrho+1)\Big),\qquad 
\text{for every}\ \varrho\ge R_0.
\]
In turn, we rewrite this as follows
\begin{equation}
\label{schemadecayL2c}
m(\varrho+1)\le \frac{\Theta}{1+\Theta}
\,m(\varrho),\qquad \mbox{ for every }\varrho\ge R_0.
\end{equation}
By iterating this estimate, we get the claimed exponential 
decay of $m(\varrho)$. 
Indeed, it is sufficient to observe that $\varrho\mapsto m(\varrho)$ 
is non-increasing and thus for every $\varrho\ge R_0+1$ we have
\begin{align*}
m(\varrho)\le m(R_0+\big\lfloor \varrho-R_0\big\rfloor)&\le 
\left(
\frac{\Theta}{1+\Theta}
\right)^{\big\lfloor \varrho-R_0\big\rfloor}\,m(R_0)
\le \left(\frac{\Theta}{1+\Theta}
\right)^{\varrho-R_0-1}\,m(R_0).
\end{align*}
Here, for every $t\in\mathbb{R}$, we denoted its {\it integer part} by 
\[
\big\lfloor t\big\rfloor=\max\Big\{n\in\mathbb{Z}\, :\, t\ge n\Big\}.
\]
This has been obtained by applying a suitable number of times 
\eqref{schemadecayL2c}. By recalling that $m(R_0)\le m(0)=1$, 
we then get
\[
\int_{\mathbb{R}^N\setminus B_{\varrho}(0)} |\Psi_n|^2\,dx
\le \left(
\frac{\Theta}{1+\Theta}
\right)^{\varrho}\,
\left(\frac{1+\Theta}{\Theta}
\right)^{R_0+1},\qquad \mbox{ for every } \varrho \ge R_0+1.
\]
We now observe that by definition
\[
\frac{1}{2}\le \frac{\Theta}{1+\Theta}\le \frac{\lambda_1+1}{2\,\lambda_1+1}=:\frac{1}{\alpha}.
\]
Observe that $\alpha$ depends on the dimension $N$ only, through the first eigenvalue $\lambda_1$ of the quantum harmonic oscillator.
Thus, we obtain in particular
\[
\int_{\mathbb{R}^N\setminus B_{\varrho}(0)} |\Psi_n|^2\,dx
\le \alpha^{-\varrho}\,
2^{R_0+1},\qquad \mbox{ for every } \varrho \ge R_0+1.
\]
By choosing
\[
C=2^{R_0+1},
\]
we get \eqref{avanti!}, as desired.
\end{proof}
\begin{rem}
Observe that the costant $C_1$ is given by
\[
C_1=\max\left\{2^{R_0+1},e^{R_0 \log \alpha}\right\}=2^{R_0+1},
\]
with $R_0$ defined by \eqref{R0}. In particular, we see that $C_1\nearrow +\infty $ as
\[
R\nearrow +\infty \qquad\text{or}\qquad n\to\infty,
\]
while $C_1$ stays uniformly bounded, as $R$ goes to $0$.
\end{rem}
We can now prove the pointwise exponential decay: this is the main result of this section.
\begin{theorem}[Exponential decay in $L^\infty$]
\label{teo:expoLinfty}
There exists a constant $C_2=C_2(N,n,R)>0$
such that
\[
0\le |\Psi_n(x)|\le C_2\,e^{-\frac{|x|}{2}\,\log \alpha},\qquad \text{for every}\ x\in
\mathbb{R}^N,
\]
where $\alpha=\alpha(N)>1$ is the same exponent as in Lemma \ref{lm:expoL2}.
\end{theorem}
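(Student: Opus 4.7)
The plan is to upgrade the $L^2$ exponential decay of Lemma \ref{lm:expoL2} to a pointwise estimate via a subharmonicity argument, avoiding a full Moser iteration. The key observation is that, far enough from the origin, the potential $V$ dominates the eigenvalue $\lambda_n(V)$, so the eigenvalue equation forces $|\Psi_n|$ to be subharmonic. The sub--mean-value inequality then converts $L^2$ decay on annular regions into a pointwise decay at the same exponential rate (up to the factor $1/2$ in the exponent produced by squaring).

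First, I would rewrite the equation of Proposition \ref{prop:classical} as
\[
-\Delta\Psi_n = (\lambda_n(V)-V)\,\Psi_n, \qquad \text{in } \mathbb{R}^N.
\]
Since $\Psi_n\in C^{2,\alpha}_{\rm loc}(\mathbb{R}^N)$, Kato's inequality applies and yields
\[
-\Delta|\Psi_n| \le (\lambda_n(V)-V)\,|\Psi_n|, \qquad \text{in}\ \mathcal{D}'(\mathbb{R}^N).
\]
By the lower bound \eqref{lowEstVSigma} and Lemma \ref{lm:stimarozza} we have $V(x)\ge \lambda_n\ge \lambda_n(V)$ as soon as $|x|\ge R+\sqrt{\lambda_n}$. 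Hence $|\Psi_n|$ is a (continuous) subharmonic function on the open set $\{|x|>R+\sqrt{\lambda_n}\}$.

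Second, set $R_*:=R+\sqrt{\lambda_n}+1$. For every $x_0\in\mathbb{R}^N$ with $|x_0|\ge R_*$, the unit ball $B_1(x_0)$ lies inside the subharmonic region, so the sub--mean-value inequality, combined with Jensen's inequality, gives
\[
|\Psi_n(x_0)| \le \fint_{B_1(x_0)}|\Psi_n|\,dx \le |B_1(0)|^{-1/2}\,\|\Psi_n\|_{L^2(B_1(x_0))}.
\]
Since $B_1(x_0)\subseteq \mathbb{R}^N\setminus B_{|x_0|-1}(0)$, an application of Lemma \ref{lm:expoL2} with $\varrho=|x_0|-1$ yields
\[
\|\Psi_n\|^2_{L^2(B_1(x_0))} \le C_1\,e^{-(|x_0|-1)\log\alpha}.
\]
Plugging in, I obtain $|\Psi_n(x_0)|\le C(N,n,R)\,e^{-|x_0|\log\alpha/2}$ for every $|x_0|\ge R_*$, which is the decay rate claimed in the theorem.

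Finally, for $|x_0|<R_*$ the uniform bound of Proposition \ref{prop:moser1}, sharpened as in Remark \ref{rem:uniformLinfty}, yields $|\Psi_n(x_0)|\le \mathrm{M}_N\,\lambda_n^{N/4}\le \mathrm{M}_N\,\lambda_n^{N/4}\,e^{R_*\log\alpha/2}\,e^{-|x_0|\log\alpha/2}$. Taking $C_2$ to be the maximum of the two constants so produced completes the proof. The main subtlety is the rigorous justification of the subharmonicity of $|\Psi_n|$ across the nodal set $\{\Psi_n=0\}$: this is where Kato's inequality is essential, although the $C^{2,\alpha}$ regularity provided by Proposition \ref{prop:classical} makes the verification entirely routine.
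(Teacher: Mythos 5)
Your proof is correct and takes a genuinely different route from the paper's. The paper proves a localized $L^\infty$--$L^2$ estimate at infinity via a Moser--type iteration (obtaining \eqref{stimachemiserve} by testing the weak form with powers of $\Psi_n$ cut off outside a ball, applying Sobolev, and iterating), then combines that with Lemma \ref{lm:expoL2}; they explicitly describe the argument as ``genuinely nonlinear in nature''. You instead exploit the structure of the confining potential directly: once $|x|\ge R+\sqrt{\lambda_n}$ you have $V\ge\lambda_n\ge\lambda_n(V)$, so the zero--order coefficient $\lambda_n(V)-V$ in $-\Delta\Psi_n=(\lambda_n(V)-V)\Psi_n$ is nonpositive, Kato's inequality makes $|\Psi_n|$ subharmonic there, and the sub--mean-value property together with Jensen turns the $L^2$ decay of Lemma \ref{lm:expoL2} into pointwise decay at half the exponential rate. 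This is a linear, one-shot argument that buys brevity and transparency; it relies on the precise sign information available far from $\Sigma$, which the Moser approach does not need (the latter would survive, say, a bounded perturbation of $V$ destroying the sign, so it is more robust even if heavier). Two small remarks: you can avoid invoking Kato's inequality altogether by working with $|\Psi_n|^2$, since
\[
\Delta\left(|\Psi_n|^2\right)=2\,|\nabla\Psi_n|^2+2\,\Psi_n\,\Delta\Psi_n=2\,|\nabla\Psi_n|^2+2\,\big(V-\lambda_n(V)\big)\,|\Psi_n|^2\ge 0
\]
on $\{|x|\ge R+\sqrt{\lambda_n}\}$, which is completely classical given Proposition \ref{prop:classical} and makes the nodal set a non-issue; and it is cleaner to take $R_*:=R+\sqrt{\lambda_n}+2$ so that $\overline{B_1(x_0)}$ is safely inside the open subharmonic region, though this only changes the final constant.
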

\begin{proof}
It is enough to prove a $L^\infty-L^2$ estimate, localized ``at infinity'', i.e. an estimate like
\begin{equation}
\label{stimachemiserve}
\|\Psi_n\|_{L^\infty(\mathbb{R}^N\setminus B_{\varrho+1}(0))}\le C\,\|\Psi_n\|_{L^2(\mathbb{R}^N\setminus B_\varrho(0))},\qquad \text{for every}\ \varrho\ge 0,
\end{equation}
for $C=C(N,n)>0$.
Then, by recalling that $\Psi_n\in L^\infty(\mathbb{R}^N)$, we would eventually get the conclusion by joining this estimate and Lemma \ref{lm:expoL2}. 
\par
Indeed, if $|x|\le 1$, we would get from Remark \ref{rem:uniformLinfty}
\[
|\Psi_n(x)|\le \mathrm{M}_N\,\lambda_n^\frac{N}{4}\le \frac{\mathrm{M}_N\,\lambda_n^\frac{N}{4}}{e^{-\frac{\log \alpha}{2}}}\,e^{-\frac{|x|}{2}\,\log \alpha}.
\]
On the other hand, if $|x|>1$, then $\varrho+1\le |x|\le \varrho+2$ for some $\varrho\ge 0$. Accordingly, we would get
\[
\begin{split}
|\Psi_n(x)|\le \|\Psi_n\|_{L^\infty(\mathbb{R}^N\setminus B_{\varrho+1}(0))}&\le C\,\|\Psi_n\|_{L^2(\mathbb{R}^N\setminus B_\varrho(0))}\\
&\le C\,\sqrt{C_1}\,e^{-\varrho\,\frac{\log\alpha}{2}}\\
&\le C\,\sqrt{C_1}\,\frac{e^{-\varrho\,\frac{\log\alpha}{2}}}{e^{-(\varrho+2)\,\frac{\log\alpha}{2}} }\,e^{-|x|\,\frac{\log\alpha}{2}}=C\,\alpha\,\sqrt{C_1}\,e^{-|x|\,\frac{\log\alpha}{2}},
\end{split}
\]
where $C_1$ is the same constant as in Lemma \ref{lm:expoL2}. In conclusion, we would get the claimed estimate with constant
\[
C_2=\max\left\{\sqrt{\alpha}\,\mathrm{M}_N\,\lambda_n^\frac{N}{4},\,C\,\alpha\,\sqrt{C_1}\right\}.
\]
The estimate \eqref{stimachemiserve} can be obtained by appealing once again to a suitable Moser--type iteration. For every pair of radii $0<r<R$ and every exponent $\beta\ge 1$, we insert in the weak formulation the test function
\[
\varphi=|\Psi_n|^{\beta-1}\,\Psi_n\,\eta^2_{r,R},
\]
where $\eta_{r,R}$ is a Lipschitz cut-off function such that
\[
\eta_{r,R} \equiv 0\ \mbox{in}\ B_{r}(0),\qquad \eta_{r,R}\equiv 1\ \mbox{in}\ \mathbb{R}^N\setminus B_{R}(0),\qquad 0\le \eta_R\le 1, 
\]
and
\[
|\nabla \eta_{r,R}|=\frac{1}{R-r},\qquad \text{in}\ B_R(0)\setminus B_r(0).
\]
This is a feasible test function, thanks to the Chain Rule in Sobolev spaces (recall that $\Psi_n\in L^\infty(\mathbb{R}^N)$ by Proposition \ref{prop:moser1}). We then get
\[
\begin{split}
\frac{4\,\beta}{(\beta+1)^2}\,\int_{\mathbb{R}^N} \left|\nabla |\Psi_n|^\frac{\beta+1}{2}\right|^2\,\eta_{r,R}^2\,dx&+\int_{\mathbb{R}^N} V\,|\Psi_n|^{\beta+1}\,\eta_{r,R}^2\,dx\\
&=\lambda_n(V)\,\int_{\mathbb{R}^N} |\Psi_n|^{\beta+1}\,\eta_{r,R}^2\,dx\\
&-2\,\int_{\mathbb{R}^N} \langle \nabla \Psi_n,\nabla \eta_{r,R}\rangle\,\eta_{r,R}\,|\Psi_n|^{\beta-1}\,\Psi_n\,dx.
\end{split}
\]
On the last term, we use as usual the Cauchy-Schwarz and Young inequalities, so to get
\[
\begin{split}
-2\,\int_{\mathbb{R}^N} \langle \nabla \Psi_n,\nabla \eta_{r,R}\rangle\,\eta_{r,R}\,|\Psi_n|^{\beta-1}\,\Psi_n\,dx&\le \delta\,\int_{\mathbb{R}^N} |\nabla \Psi_n|^2\,|\Psi_n|^{\beta-1}\,\eta_{r,R}^2\,dx\\
&+\frac{1}{\delta}\,\int_{\mathbb{R}^N} |\nabla \eta_{r,R}|^2\,|\Psi_n|^{\beta+1}\,dx\\
&=\delta\,\frac{4}{(\beta+1)^2}\,\int_{\mathbb{R}^N} \left|\nabla |\Psi_n|^\frac{\beta+1}{2}\right|^2\,\eta_{r,R}^2\,dx\\
&+\frac{1}{\delta}\,\int_{\mathbb{R}^N} |\nabla \eta_{r,R}|^2\,|\Psi_n|^{\beta+1}\,dx,
\end{split}
\]
which holds for every $\delta>0$. In particular, by taking $\delta=\beta/2$ we can absorb the gradient term on the right-hand side and obtain
\[
\begin{split}
\frac{2\,\beta}{(\beta+1)^2}\,\int_{\mathbb{R}^N} \left|\nabla |\Psi_n|^\frac{\beta+1}{2}\right|^2\,\eta_{r,R}^2\,dx+\int_{\mathbb{R}^N} V\,|\Psi_n|^{\beta+1}\,\eta_{r,R}^2\,dx&\le\lambda_n(V)\,\int_{\mathbb{R}^N} |\Psi_n|^{\beta+1}\,\eta_{r,R}^2\,dx\\
&+\frac{2}{\beta}\,\int_{\mathbb{R}^N} |\nabla \eta_{r,R}|^2\,|\Psi_n|^{\beta+1}\,dx.
\end{split}
\]
We now set for brevity $\vartheta=(\beta+1)/2$, drop the (non-negative) term containing $V$ and use some elementary manipulations, so to get
\[
\int_{\mathbb{R}^N} \left|\nabla |\Psi_n|^\vartheta\right|^2\,\eta_{r,R}^2\,dx\le 2\,\vartheta\,\lambda_n(V)\,\int_{\mathbb{R}^N} |\Psi_n|^{2\,\vartheta}\,\eta_{r,R}^2\,dx+4\,\int_{\mathbb{R}^N} |\nabla \eta_{r,R}|^2\,|\Psi_n|^{2\,\vartheta}\,dx.
\]
We add on both sides the term
\[
\int_{\mathbb{R}^N} |\nabla \eta_{r,R}|^2\,|\Psi_n|^{2\,\vartheta}\,dx,
\]
and use that
\[
\int_{\mathbb{R}^N} |\nabla \eta_{r,R}|^2\,|\Psi_n|^{2\,\vartheta}\,dx+\int_{\mathbb{R}^N} \left|\nabla |\Psi_n|^\vartheta\right|^2\,\eta_{r,R}^2\,dx\ge \frac{1}{2}\,\int_{\mathbb{R}^N} \left|\nabla \left(|\Psi_n|^\vartheta\,\eta_{r,R}\right)\right|^2\,dx.
\]
This gives
\[
\int_{\mathbb{R}^N} \left|\nabla \left(|\Psi_n|^\vartheta\,\eta_{r,R}\right)\right|^2\,dx\le 4\,\vartheta\,\lambda_n(V)\,\int_{\mathbb{R}^N} |\Psi_n|^{2\,\vartheta}\,\eta_{r,R}^2\,dx+10\,\int_{\mathbb{R}^N} |\nabla \eta_{r,R}|^2\,|\Psi_n|^{2\,\vartheta}\,dx.
\]
%We also used that $2\,\vartheta+1\le 3\,\vartheta$. 
For simplicity, we now confine ourselves to the case $N\ge 3$: the cases $N=2$ and $N=1$ can be treated with minor modifications, as in the proof of Proposition \ref{prop:moser1}.
\par
Thus, we can bound the left-hand side from below, again thanks to the Sobolev inequality. We obtain
\[
\mathcal{T}_N\,\left(\int_{\mathbb{R}^N}\left(|\Psi_n|^\vartheta\,\eta_{r,R}\right)^{2^*}\,dx\right)^\frac{2}{2^*}\le 4\,\vartheta\,\lambda_n(V)\,\int_{\mathbb{R}^N} |\Psi_n|^{2\,\vartheta}\,\eta_{r,R}^2\,dx+10\,\int_{\mathbb{R}^N} |\nabla \eta_{r,R}|^2\,|\Psi_n|^{2\,\vartheta}\,dx.
\]
Thanks to the properties of $\eta_{r,R}$, this in turn implies that
\[
\mathcal{T}_N\,\left(\int_{\mathbb{R}^N\setminus B_{R}(0)}|\Psi_n|^{2^*\vartheta}\right)^\frac{2}{2^*}\le 4\,\vartheta\,\lambda_n(V)\,\int_{\mathbb{R}^N\setminus B_r(0)} |\Psi_n|^{2\,\vartheta}\,dx+\frac{10}{(R-r)^2}\,\int_{\mathbb{R}^N\setminus B_{r}(0)} |\Psi_n|^{2\,\vartheta}\,dx.
\]
With some manipulations more, we can also obtain
\begin{equation}
\label{premoserdecay}
\left(\int_{\mathbb{R}^N\setminus B_{R}(0)}|\Psi_n|^{2^*\vartheta}\right)^\frac{1}{2^*\vartheta}\le \left(\frac{10\,\vartheta}{\mathcal{T}_N}\,\left(\lambda_n(V)+\frac{1}{(R-r)^2}\right)\right)^\frac{1}{2\,\vartheta}\,\left(\int_{\mathbb{R}^N\setminus B_r(0)}|\Psi_n|^{2\,\vartheta}\,dx\right)^\frac{1}{2\,\vartheta}.
\end{equation}
We now introduce the sequence of exponents
\[
\vartheta_0=1,\qquad \vartheta_{i+1}=\frac{2^*}{2}\,\vartheta_i=\left(\frac{N}{N-2}\right)^{i+1},\qquad \mbox{for}\ i\in\mathbb{N}.
\]
together with the sequence of radii
\[
R_i=\varrho+1-\frac{1}{2^i},\qquad \mbox{ for } i\in\mathbb{N},
\]
where $\varrho\ge 0$ is fixed.
Observe that $R_0=\varrho$, $R_\infty=\varrho+1$ and $R_{i+1}-R_i=1/2^{i+1}$. By using \eqref{premoserdecay} with $\vartheta=\vartheta_i$, $r=R_i$ and $R=R_{i+1}$, we obtain
\[
\left(\int_{\mathbb{R}^N\setminus B_{R_{i+1}}(0)}|\Psi_n|^{2\,\vartheta_{i+1}}\right)^\frac{1}{2\,\vartheta_{i+1}}\le \left(\frac{10\,\vartheta_i}{\mathcal{T}_N}\,\left(\lambda_n+4^{i+1}\right)\right)^\frac{1}{2\,\vartheta_i}\,\left(\int_{\mathbb{R}^N\setminus B_{R_i}(0)}|\Psi_n|^{2\,\vartheta_i}\,dx\right)^\frac{1}{2\,\vartheta_i}.
\]
Observe that we also used Lemma \ref{lm:stimarozza}, in order to estimate $\lambda_n(V)$.
Starting from $i=0$ and iterating this estimate infinitely many times, we get the desired conclusion \eqref{stimachemiserve}.
\end{proof}

\section{Stability of eigenspaces}
\label{sec:8}

We already know from Lemma \ref{lm:stimarozza} that
\[
\sqrt{\lambda_n}-R\le \sqrt{\lambda_n(V)}\le \sqrt{\lambda_n},\qquad \text{for every}\ n\in\mathbb{N}^*.
\]
In particular, we get that the spectrum of our operator
collapses to the spectrum of the quantum harmonic oscillator, as $R$ goes to $0$.
\par
In this section we want to prove a similar kind of {\it quantitative} result, for the relevant eigenspaces.
\begin{theorem}
\label{thm:stability}
For $n\in\mathbb{N}^*$, we set
\[
\mathcal{W}_n=\Big\{\phi\in H^1(\mathbb{R}^N;|x|^2)\, :\, \phi\ \text{is an eigenstate relative to}\ \lambda_n\Big\}.
\]
Then there exists an explicit constant $C_3=C_3(N,n)>0$ such that
\[
\mathrm{dist}_{L^2(\mathbb{R}^N)}(\Psi_n;\mathcal{W}_n)\le C_3\, \sqrt{R},\qquad \text{for every}\ R>0.
\]
\end{theorem}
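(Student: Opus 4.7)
The plan is to expand $\Psi_n$ in the $L^2$-orthonormal basis $\{\Phi_k\}_{k\in\mathbb{N}^*}$ of eigenstates of the quantum harmonic oscillator, writing $\Psi_n=\sum_k c_k\Phi_k$ with $\sum_k c_k^2=1$. Since $\mathcal{W}_n$ is (by construction) the $L^2$-closed span of those $\Phi_k$ for which $\lambda_k=\lambda_n$, the squared distance to $\mathcal{W}_n$ equals $\sum_{\lambda_k\neq\lambda_n}c_k^2$, so the task reduces to bounding this tail. The key observation is that $\Psi_n$ lies in $\mathfrak{D}(\mathcal{H}_{|x|^2})$: by Theorem \ref{thm:domain} applied both to $V$ and to $|x|^2$ together with Lemma \ref{lm:V}, the two domains coincide with $H^2(\mathbb{R}^N)\cap\{|x|^2\phi\in L^2(\mathbb{R}^N)\}$. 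From $\mathcal{H}_V[\Psi_n]=\lambda_n(V)\Psi_n$ we then obtain the pointwise identity $(\mathcal{H}_{|x|^2}-\lambda_n(V))\Psi_n=(|x|^2-V)\Psi_n$ in $L^2$, and expanding along the basis yields
\[
\sum_k(\lambda_k-\lambda_n(V))^2\,c_k^2 \;=\; \int_{\mathbb{R}^N}(|x|^2-V)^2\,|\Psi_n|^2\,dx.
\]

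Next I would estimate the right-hand side by $O(R^2)$. The reverse triangle inequality for the distance function, together with $\Sigma\subseteq\overline{B_R(0)}$, gives $|\sqrt{V(x)}-|x||=|\mathrm{dist}(x,\Sigma)-\mathrm{dist}(x,\{0\})|\le R$ for every $x$. Hence
\[
(|x|^2-V)^2 \;\le\; R^2\bigl(|x|+\sqrt{V}\bigr)^2 \;\le\; 2R^2\bigl(|x|^2+V\bigr),
\]
so, using $\int V|\Psi_n|^2\le \lambda_n(V)\le\lambda_n$ (Lemma \ref{lm:stimarozza}) and the second-moment bound \eqref{momentosecondo} for $\int|x|^2|\Psi_n|^2$,
\[
\int_{\mathbb{R}^N}(|x|^2-V)^2|\Psi_n|^2\,dx \;\le\; C(N,n)\,R^2.
\]

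It remains to turn this into a bound on the tail coefficients via a spectral gap argument. Let $\gamma^+=\min\{\lambda_k-\lambda_n:\lambda_k>\lambda_n\}>0$ and $\gamma^-=\min\{\lambda_n-\lambda_k:\lambda_k<\lambda_n\}>0$ (taking $\gamma^-=+\infty$ when no such $k$ exists). From Lemma \ref{lm:stimarozza} we have $|\lambda_n(V)-\lambda_n|\le 2R\sqrt{\lambda_n}$, hence for $k$ with $\lambda_k>\lambda_n$ one keeps $\lambda_k-\lambda_n(V)\ge\gamma^+$, while for $\lambda_k<\lambda_n$ one has $\lambda_n(V)-\lambda_k\ge\gamma^--2R\sqrt{\lambda_n}$. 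Setting $R_0=\gamma^-/(4\sqrt{\lambda_n})$ and $\gamma_*=\min(\gamma^+,\gamma^-/2)$, in the range $R\le R_0$ the gap $|\lambda_k-\lambda_n(V)|\ge\gamma_*$ is preserved for every $k\notin I_=$, and therefore
\[
\gamma_*^2\sum_{\lambda_k\neq\lambda_n}c_k^2 \;\le\; C(N,n)\,R^2,
\]
giving the (even stronger) bound $\mathrm{dist}_{L^2}(\Psi_n,\mathcal{W}_n)\le (\sqrt{C}/\gamma_*)R\le (\sqrt{C\,R_0}/\gamma_*)\sqrt{R}$. For $R>R_0$ the trivial estimate $\mathrm{dist}_{L^2}(\Psi_n,\mathcal{W}_n)\le\|\Psi_n\|_{L^2}=1\le\sqrt{R/R_0}$ is of the same form, and combining the two regimes gives the conclusion.

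The main obstacle is precisely this splitting of $R$ into two regimes: for large $R$ the eigenvalue $\lambda_n(V)$ can in principle drift close to some lower oscillator eigenvalue $\lambda_k$ with $k\in I_<$, so the spectral gap used above can collapse, and one is forced to retreat to the trivial bound. This is what costs the half-power and produces the $\sqrt{R}$ rate in the statement, even though the underlying perturbative argument is genuinely of order $R$ when $R$ is small enough compared to the intrinsic gap $\gamma^-$ at $\lambda_n$.
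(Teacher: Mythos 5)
Your proof is correct, and it takes a genuinely different --- and in fact sharper --- route than the paper.

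The paper decomposes the squared distance as $\mathcal{J}_1+\mathcal{J}_2$, where $\mathcal{J}_1$ is the \emph{finite} sum over low frequencies ($\lambda_k<\lambda_n$) and $\mathcal{J}_2$ the infinite tail over high frequencies ($\lambda_k>\lambda_n$), and treats the two pieces by different methods. For $\mathcal{J}_1$ it derives the same coefficient-wise ``almost orthogonality'' identity you use, but applies Cauchy--Schwarz term by term; since there are only finitely many terms this gives $\mathcal{J}_1\lesssim R^2$. For $\mathcal{J}_2$, however, the paper retreats to a cruder Rayleigh-quotient comparison based on the pointwise bound \eqref{Vepsilon}, obtaining only $\mathcal{J}_2\lesssim R+\mathcal{J}_1$; summing yields $\mathrm{dist}^2\lesssim R$, hence $\sqrt{R}$. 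Your argument unifies both tails by converting the coefficient identity into the single Parseval relation
\[
\sum_k\big(\lambda_k-\lambda_n(V)\big)^2 c_k^2=\big\|\,(|x|^2-V)\,\Psi_n\,\big\|_{L^2(\mathbb{R}^N)}^2,
\]
and then bounds the right-hand side by $O(R^2)$ using $|\sqrt{V}-|x||\le R$ together with the moment and energy estimates. Combined with the uniform spectral gap $|\lambda_k-\lambda_n(V)|\ge\gamma_*$ for $R$ below the intrinsic gap scale $R_0=R_0(N,n)$, this gives $\mathrm{dist}^2\lesssim R^2$, i.e.\ the stronger \emph{linear} rate $\mathrm{dist}\lesssim R$ on $\{R\le R_0\}$, which of course implies the claimed $\sqrt{R}$ after the trivial bound is used for $R>R_0$. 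Your identification of the two-regime split as the sole source of the half power is accurate, and your approach shows the paper's exponent is not optimal.

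Two small remarks. First, invoking Theorem~\ref{thm:domain} is legitimate but not strictly necessary: the coefficient identity $\int(|x|^2-V)\Psi_n\Phi_k\,dx=(\lambda_k-\lambda_n(V))\,\widehat{\Psi_n}(k)$ follows directly by subtracting the weak formulations of the two eigenvalue equations (using $\Phi_k,\Psi_n\in H^1(\mathbb{R}^N;V)=H^1(\mathbb{R}^N;|x|^2)$ as mutual test functions), and Parseval then only needs $(|x|^2-V)\Psi_n\in L^2(\mathbb{R}^N)$, which you already establish. Second, the bound $\int(|x|^2-V)^2|\Psi_n|^2\,dx\le 2R^2\big(4R^2+5\lambda_n\big)$ contains an $R^4$ term, so the constant $C(N,n)$ you write implicitly uses the restriction $R\le R_0$; this is consistent with your two-regime setup but worth stating explicitly.
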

\begin{proof}
We divide the proof in various steps, for ease of readability.
\vskip.2cm\noindent
{\it Step 1: set-up}.
We need to fix at first some notation. We call
\[
\kappa(n)=\mathrm{dim\,} \mathcal{W}_n-1,
\]
then there exists an index $j_n\in\{0,\dots,n-1\}$ such that\footnote{For $n=1$, we have already observed in Remark \ref{rem:ground} that $\kappa(1)=0$. Accordingly, in this case we have $j_1=0$.}
\begin{equation}
\label{multiple}
\lambda_{n-j_n}=\dots=\lambda_n=\dots=\lambda_{n-j_n+\kappa(n)}.
\end{equation}
By construction, we thus have
\begin{equation}
\label{delta}
\delta_n:=\lambda_{n-j_n+\kappa(n)+1}-\lambda_{n}>0.
\end{equation}
We then set 
\[
R_1=\frac{\sqrt{\lambda_n}-\sqrt{\lambda_{n-j_n-1}}}{2},
\]
with the notation $\lambda_0:=0$.
According to Lemma \ref{lm:stimarozza}, we have for every $0<R\le R_1$
\begin{equation}
\label{bassolambda}
\sqrt{\lambda_n(V)}\ge \sqrt{\lambda_n}-R\ge\frac{\sqrt{\lambda_n}+\sqrt{\lambda_{n-j_n-1}}}{2}>\sqrt{\lambda_{n-j_n-1}}.
\end{equation}
Let us indicate by $\{\Phi_n\}_{n\in\mathbb{N}^*}$ an orthonormal basis of eigenstates for the quantum harmonic oscillator. For every $k\in\mathbb{N}^*$, we define  
\[
\widehat{\Psi_n}(k)=\int_{\mathbb{R}^N} \Psi_n\,\Phi_k\,dx,
\]
i.e. the {\it $k-$th Fourier coefficient }of $\Psi_n$. We then have
\begin{equation}
\label{normafourier}
1=\|\Psi_n\|^2_{L^2(\mathbb{R}^N)}=\sum_{k=1}^\infty |\widehat{\Psi_n}(k)|^2.
\end{equation}
Finally, we observe that by orthonormality
\begin{equation}
\label{distanza}
\begin{split}
\left(\mathrm{dist}_{L^2(\mathbb{R}^N)}(\Psi_n;\mathcal{W}_n)\right)^2&=\left\|\Psi_n-\sum_{k=n-j_n}^{n-j_n+\kappa(n)} \widehat{\Psi_n}(k)\, \Phi_k\right\|_{L^2(\mathbb{R}^N)}^2\\
&=\sum_{k=1}^{n-j_n-1} |\widehat{\Psi_n}(k)|^2+\sum_{k=n-j_n+\kappa(n)+1}^{\infty} |\widehat{\Psi_n}(k)|^2:=\mathcal{J}_1+\mathcal{J}_2.
\end{split}
\end{equation}
It is intended that the first term $\mathcal{J}_1$ is void in the case $j_n=n-1$.
Thus, in order to get the claimed estimate, it is sufficient to suitably estimate the last two sums. Before going further, we observe that it is sufficient to get the claimed estimate for $R\le R_1$: indeed, for every $R>R_1$ we would simply get
\[
\left(\mathrm{dist}_{L^2(\mathbb{R}^N)}(\Psi_n;\mathcal{W}_n)\right)^2=\mathcal{J}_1+\mathcal{J}_2\le \sum_{k=1}^\infty |\widehat{\Psi_n}(k)|^2=1\le \frac{1}{R_1}\,R,
\]
i.e. we have the desired estimate with $C_3=1/R_1$, the latter depending only on $N$ and $n$, by definition.
\vskip.2cm\noindent
{\it Step 2: high frequencies estimate}. We show how the estimate of $\mathcal{J}_2$ can reduced to the estimate of $\mathcal{J}_1$.
By using that $\Psi_n$ is an eigenstate corresponding to $\lambda_n(V)$, we have
\[
\begin{split}
\lambda_n(V)&=\int_{\mathbb{R}^N} |\nabla \Psi_n|^2\,dx+\int_{\mathbb{R}^N} V\,|\Psi_n|^2\,dx\\
%&=\int_{\mathbb{R}^N} |\nabla \Psi_n|^2\,dx+\int_{\mathbb{R}^N} |x|^2\,|\Psi_n|^2\,dx+\int_{\mathbb{R}^N} \big(V-|x|^2\big)\,|\Psi_n|^2\,dx\\
&\ge \sum_{k=1}^\infty |\widehat{\Psi_n}(k)|^2\,\lambda_k+\frac{1}{1+\varepsilon}\,\int_{\mathbb{R}^N} |x|^2\,|\Psi_n|^2\,dx-\frac{R^2}{\varepsilon}\\
&\ge \sum_{k=1}^\infty |\widehat{\Psi_n}(k)|^2\,\lambda_k-\frac{R^2}{\varepsilon},
\end{split}
\]
for every $\varepsilon>0$. In the first inequality we used \eqref{Vepsilon}.
We estimate the series as follows
\[
\begin{split}
\sum_{k=1}^\infty |\widehat{\Psi_n}(k)|^2\,\lambda_k&=\sum_{k=n-j_n+\kappa(n)+1}^\infty |\widehat{\Psi_n}(k)|^2\,\lambda_k+\lambda_n\,\sum_{k=n-j_n}^{n-j_n+\kappa(n)}|\widehat{\Psi_n}(k)|^2+\sum_{k=1}^{n-j_n-1} |\widehat{\Psi_n}(k)|^2\,\lambda_k\\
&\ge \lambda_{n-j_n+\kappa(n)+1}\,\mathcal{J}_2+\lambda_n\,\sum_{k=n-j_n}^{n-j_n+\kappa(n)}|\widehat{\Psi_n}(k)|^2+\lambda_1\, \mathcal{J}_1.
\end{split}
\]
Observe that we used the monotonicity of eigenvalues with respect to $n$ and the multiplicity assumption \eqref{multiple}.
By further choosing $\varepsilon=R$ and subtracting $\lambda_n$, up to now we have obtained
\begin{equation}
\label{stimaeigenstate}
\begin{split}
\big(\lambda_n(V)-\lambda_n\big)&+R
%+\frac{1}{1+R}\,\int_{\mathbb{R}^N} |x|^2\,|\Psi_n|^2\,dx+R\\
\ge \lambda_{n-j_n+\kappa(n)+1}\,\mathcal{J}_2+\lambda_n\,\left(\sum_{k=n-j_n}^{n-j_n+\kappa(n)}|\widehat{\Psi_n}(k)|^2-1\right)+\lambda_1\,\mathcal{J}_1.
\end{split}
\end{equation}
Thanks to \eqref{normafourier}, we can write
\[
\sum_{k=n-j_n}^{n-j_n+\kappa(n)}|\widehat{\Psi_n}(k)|^2-1=-\mathcal{J}_1-\mathcal{J}_2.
\]
By injecting this identity in \eqref{stimaeigenstate}, we get
\[
\begin{split}
R+\big(\lambda_n(V)-\lambda_n\big)+(\lambda_n-\lambda_1)\,\mathcal{J}_1\ge (\lambda_{n-j_n+\kappa(n)+1}-\lambda_n)\,\mathcal{J}_2.
\end{split}
\]
Finally, by recalling \eqref{delta} and Lemma \ref{lm:stimarozza}, we arrive at the estimate
\begin{equation}
\label{daviderompe}
\begin{split}
\frac{R}{\delta_n}+\frac{\lambda_n-\lambda_1}{\delta_n}\,\mathcal{J}_1\ge \mathcal{J}_2,
\end{split}
\end{equation}
which concludes this step. The previous estimate holds for every $R>0$.
\vskip.2cm\noindent
{\it Step 3: low frequencies estimate}.
We now estimate the finite sum $\mathcal{J}_1$. We will derive some ``almost orthogonality'' relations. Indeed, by using first the equation for $\Phi_k$ and then that for $\Psi_n$, we get
\[
\begin{split}
\widehat{\Psi_n}(k)=\int_{\mathbb{R}^N} \Psi_n\,\Phi_k\,dx&=\frac{1}{\lambda_k}\,\left[\int_{\mathbb{R}^N} \langle \nabla \Phi_k,\nabla\Psi_n\rangle\,dx+\int_{\mathbb{R}^N} |x|^2\,\Phi_k\,\Psi_n\,dx\right]\\
&=\frac{1}{\lambda_k}\,\left[\int_{\mathbb{R}^N} \langle \nabla \Phi_k,\nabla\Psi_n\rangle\,dx+\int_{\mathbb{R}^N} V\,\Phi_k\,\Psi_n\,dx\right]\\
&+\frac{1}{\lambda_k}\,\int_{\mathbb{R}^N} \big(|x|^2-V\big)\,\Phi_k\,\Psi_n\,dx\\
&=\frac{\lambda_n(V)}{\lambda_k}\,\int_{\mathbb{R}^N} \Psi_n\,\Phi_k\,dx+\frac{1}{\lambda_k}\,\int_{\mathbb{R}^N} \big(|x|^2-V\big)\,\Phi_k\,\Psi_n\,dx\\
&=\frac{\lambda_n(V)}{\lambda_k}\,\widehat{\Psi_n}(k)+\frac{1}{\lambda_k}\,\int_{\mathbb{R}^N} \big(|x|^2-V\big)\,\Phi_k\,\Psi_n\,dx.
\end{split}
\]
We thus have the following estimate
\[
\big|\lambda_k-\lambda_n(V)\big|\,|\widehat{\Psi_n}(k)|\le \int_{\mathbb{R}^N} \big||x|^2-V\big|\,|\Phi_k|\,|\Psi_n|\,dx.
\]
By recalling \eqref{bassolambda}, for $k=1,\dots,n-j_n-1$ and $R\le R_1$ we have
\[
\big|\lambda_k-\lambda_n(V)\big|=\lambda_n(V)-\lambda_k\ge \left(\frac{\sqrt{\lambda_n}+\sqrt{\lambda_{n-j_n-1}}}{2}\right)^2-\lambda_{n-j_n-1}=:\alpha_{N,n}>0.
\]
We thus obtain for $k=1,\dots,n-j_n-1$ and $R\le R_1$
\[
|\widehat{\Psi_n}(k)|\le\frac{1}{\alpha_{N,n}}\,\left(\int_{\mathbb{R}^N} \big||x|^2-V\big|\,|\Phi_k|^2\,dx\right)^\frac{1}{2}\,\left(\int_{\mathbb{R}^N} \big||x|^2-V\big|\,|\Psi_n|^2\,dx\right)^\frac{1}{2}.
\]
We show that the last two terms can be controlled in terms of $R$. Indeed,
from \eqref{eqn1} and \eqref{Vepsilon}, we have
\[
-\frac{\varepsilon}{1+\varepsilon}\,|x|^2-\frac{R^2}{\varepsilon}\le V(x)-|x|^2 \leq \varepsilon\,|x|^2 +\left(1+\frac{1}{\varepsilon}\right)\,\delta^2,
\]
for every $\varepsilon>0$. Thus, we get in particular
\[
\left|V(x)-|x|^2\right|\le\max\left\{\frac{\varepsilon}{1+\varepsilon}\,|x|^2+\frac{R^2}{\varepsilon}, \varepsilon\,|x|^2 +\left(1+\frac{1}{\varepsilon}\right)\,\delta^2\right\}.
\]
We choose again $\varepsilon=R$, so that
\begin{equation}
\label{closepot}
\left|V(x)-|x|^2\right|\le R\,\max\left\{\frac{|x|^2}{R+1}+1,|x|^2 +(R+1)\,\left(\frac{\delta}{R}\right)^2\right\}\le R\,(|x|^2+R+1).
\end{equation}
Finally, we get for $k=1,\dots,n-j_n-1$ and $R\le R_1$
\[
|\widehat{\Psi_n}(k)|\le\frac{R}{\alpha_{N,n}}\,\left(\int_{\mathbb{R}^N} |x|^2\,|\Phi_k|^2\,dx+(R+1)\right)^\frac{1}{2}\,\left(\int_{\mathbb{R}^N} |x|^2\,|\Psi_n|^2\,dx+(R+1)\right)^\frac{1}{2}.
\]
In conclusion, we get
\begin{equation}
\label{fame!}
\mathcal{J}_1\le \frac{R^2}{\alpha_{N,n}^2}\,\left(\int_{\mathbb{R}^N} |x|^2\,|\Psi_n|^2\,dx+(R+1)\right)\,\sum_{k=1}^{n-j_n-1}\left(\int_{\mathbb{R}^N} |x|^2\,|\Phi_k|^2\,dx+(R+1)\right),
\end{equation}
for every $R\le R_1$.
\vskip.2cm\noindent
{\it Step 4: conclusion}.
By joining \eqref{distanza}, \eqref{daviderompe} and \eqref{fame!} we get for every $R\le R_1$
\[
\begin{split}
\left(\mathrm{dist}_{L^2(\mathbb{R}^N)}(\Psi_n;\mathcal{W}_n)\right)^2&=\mathcal{J}_1+\mathcal{J}_2\\
&\le \frac{R}{\delta_n}+\left(1+\frac{\lambda_n-\lambda_1}{\delta_n}\right)\,\mathcal{J}_1\\
&\le \frac{R}{\delta_n}+\frac{R^2}{\alpha_{N,n}^2}\,\left(1+\frac{\lambda_n-\lambda_1}{\delta_n}\right)\,\left(\int_{\mathbb{R}^N} |x|^2\,|\Psi_n|^2\,dx+(R_1+1)\right)\\
&\times\sum_{k=1}^{n-j_n-1}\left(\int_{\mathbb{R}^N} |x|^2\,|\Phi_k|^2\,dx+(R_1+1)\right).
\end{split}
\]
%For the first term on the right-hand side, from Lemma \ref{lm:stimarozza} we get
%\begin{equation}
%\label{distanzaspettro}
%|\lambda_n(V)-\lambda_n|=\Big(\sqrt{\lambda_n}-\sqrt{\lambda_n(V)}\Big)\,\Big(\sqrt{\lambda_n}+\sqrt{\lambda_n(V)}\Big)\le 2\,R\,\sqrt{\lambda_n}.
%\end{equation}
In order to conclude, we only need to observe that for $R\le R_1$
\[
\int_{\mathbb{R}^N} |x|^2\,|\Psi_n|^2\,dx\le 4\,R_1^2+4\,\lambda_n,
\]
thanks to \eqref{momentosecondo}.
Thus, we eventually get the desired estimate.
\end{proof}
With a little extra work, we can improve the metric of the previous stability estimate.
\begin{coro}
\label{coro:stability}
For $n\in\mathbb{N}^*$, we still set
\[
\mathcal{W}_n=\Big\{\phi\in H^1(\mathbb{R}^N;|x|^2)\, :\, \phi\ \text{is an eigenstate relative to}\ \lambda_n\Big\}.
\]
Then there exists an explicit constant $C_4=C_4(N,n)>0$ such that
\[
\mathrm{dist}_{H^1(\mathbb{R}^N;|x|^2)}(\Psi_n;\mathcal{W}_n)\le C_4\, \sqrt{R},\qquad \text{for every}\ R>0.
\]
\end{coro}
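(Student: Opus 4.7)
The plan is to upgrade the $L^2$ estimate of Theorem~\ref{thm:stability} to the $|x|^2$-weighted $H^1$ norm by testing against the same projection $P_n=\sum_{k=n-j_n}^{n-j_n+\kappa(n)}\widehat{\Psi_n}(k)\,\Phi_k\in\mathcal{W}_n$ already used in that proof. Since $\Psi_n\in H^1(\mathbb{R}^N;|x|^2)$ by Theorem~\ref{thm:domain}, and the orthonormal basis $\{\Phi_k\}_{k\in\mathbb{N}^*}$ simultaneously diagonalizes the $L^2$ inner product and the harmonic-oscillator quadratic form, a standard Parseval-type identity gives
\[
\int_{\mathbb{R}^N}|\nabla(\Psi_n-P_n)|^2\,dx+\int_{\mathbb{R}^N}|x|^2\,|\Psi_n-P_n|^2\,dx=\sum_{k\notin[n-j_n,\,n-j_n+\kappa(n)]}\lambda_k\,|\widehat{\Psi_n}(k)|^2.
\]
Since $P_n\in\mathcal{W}_n$, the left-hand side is an upper bound for $\mathrm{dist}_{H^1(\mathbb{R}^N;|x|^2)}(\Psi_n;\mathcal{W}_n)^2$, so it suffices to bound the right-hand side by $C\,R$.

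The starting point is the evaluation of the \emph{full} series $\sum_k \lambda_k |\widehat{\Psi_n}(k)|^2$: combining Parseval with the eigenvalue equation for $\Psi_n$ yields
\[
\sum_{k=1}^\infty\lambda_k\,|\widehat{\Psi_n}(k)|^2=\int_{\mathbb{R}^N}|\nabla\Psi_n|^2\,dx+\int_{\mathbb{R}^N}|x|^2\,|\Psi_n|^2\,dx=\lambda_n(V)+\int_{\mathbb{R}^N}\bigl(|x|^2-V\bigr)\,|\Psi_n|^2\,dx.
\]
The perturbation integral is $O(R)$, for $R$ in a bounded range, by the pointwise estimate \eqref{closepot} and the second-moment bound \eqref{momentosecondo}. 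Subtracting the contribution of the multiplicity block, $\lambda_n\sum_{k=n-j_n}^{n-j_n+\kappa(n)}|\widehat{\Psi_n}(k)|^2=\lambda_n(1-\mathcal{J}_1-\mathcal{J}_2)$ (in the notation of the previous proof), and using Lemma~\ref{lm:stimarozza} on $\lambda_n-\lambda_n(V)$, I would arrive at
\[
\sum_{k\notin[n-j_n,\,n-j_n+\kappa(n)]}\lambda_k\,|\widehat{\Psi_n}(k)|^2\le \bigl(\lambda_n(V)-\lambda_n\bigr)+\int_{\mathbb{R}^N}(|x|^2-V)|\Psi_n|^2\,dx+\lambda_n(\mathcal{J}_1+\mathcal{J}_2).
\]
Each of the three terms on the right-hand side is $O(R)$: the first by Lemma~\ref{lm:stimarozza}, the second by the previous step, and the third by Theorem~\ref{thm:stability}, which already provides $\mathcal{J}_1+\mathcal{J}_2\le C_3^2\,R$. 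Taking the square root gives the bound in $\sqrt{R}$; the regime where $R$ is not small is absorbed by a trivial argument, in the spirit of Step~1 of the proof of Theorem~\ref{thm:stability}, which only affects the final constant $C_4$.

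The main obstacle I anticipate is justifying rigorously the weighted Parseval identity $\int|\nabla\phi|^2\,dx+\int|x|^2|\phi|^2\,dx=\sum_k\lambda_k|\widehat{\phi}(k)|^2$ on the whole of $H^1(\mathbb{R}^N;|x|^2)$. Formally this is a consequence of the spectral theorem for the self-adjoint realisation of $-\Delta+|x|^2$, but it requires identifying the form domain with $H^1(\mathbb{R}^N;|x|^2)$ and checking convergence of the eigenfunction expansion in the form norm. Once this technical point is in place, the rest reduces to bookkeeping the three contributions in the diagonal sum, now weighted by the $\lambda_k$, in the same spirit as the proof of Theorem~\ref{thm:stability}.
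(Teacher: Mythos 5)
Your proposal is correct, and it takes a genuinely different route from the paper's. Both proofs work with the same candidate element $P_n=\sum_{k=n-j_n}^{n-j_n+\kappa(n)}\widehat{\Psi_n}(k)\,\Phi_k\in\mathcal{W}_n$, but the paper bounds $\|\Psi_n-P_n\|_{H^1(\mathbb{R}^N;|x|^2)}$ by subtracting the weak formulations of the two eigenvalue equations (that for $\Psi_n$ against $\mathcal{H}_V$ and that for $P_n$ against $-\Delta+|x|^2$), tested against $P_n-\Psi_n$, and then estimating the resulting error terms via Cauchy--Schwarz. That route requires the fourth-moment bound $\int|x|^4\,|\Psi_n|^2\,dx<\infty$ from Proposition~\ref{prop:perognik} with $k=1$. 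You instead appeal to the weighted Parseval identity for the harmonic-oscillator form, which turns the problem into evaluating the tail $\sum_{k\notin[n-j_n,\,n-j_n+\kappa(n)]}\lambda_k\,|\widehat{\Psi_n}(k)|^2$ exactly, and each of the three resulting pieces is $O(R)$ by ingredients already available: Lemma~\ref{lm:stimarozza} for $\lambda_n(V)-\lambda_n$, the pointwise comparison \eqref{closepot} plus the second moment \eqref{momentosecondo} for $\int(|x|^2-V)|\Psi_n|^2\,dx$, and Theorem~\ref{thm:stability} for $\mathcal{J}_1+\mathcal{J}_2$. Your approach is arguably cleaner: it produces an identity rather than a chain of inequalities and needs only the second moment of $\Psi_n$ (not the fourth). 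The price is the spectral-theoretic input you correctly flag---identification of the form domain of $-\Delta+|x|^2$ with $H^1(\mathbb{R}^N;|x|^2)$ and convergence of the eigenfunction expansion in the form norm---but this is standard and is essentially provided by Lemma~\ref{lm:hilbert} and Corollary~\ref{coro:variational}. One caveat that both your write-up and the paper leave implicit: the reduction to $R\le R_1$ is genuinely trivial in the $L^2$ metric (the distance is at most $1$), but the $\|\cdot\|_{H^1(\mathbb{R}^N;|x|^2)}$ norm of $\Psi_n$ is controlled only by \eqref{momentosecondo}, which grows like $R$; so the absorption of the regime $R>R_1$ into the constant $C_4$ deserves a sentence of justification rather than a wave at Step~1, in your proof as in the paper's.
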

\begin{proof}
With the previous notation, let us set 
\[
\Phi=\sum_{k=n-j_n}^{n-j_n+\kappa(n)} \widehat{\Psi_n}(k)\,\Phi_k\in \mathcal{W}_n.
\]
We thus have 
\[
\left(\mathrm{dist}_{H^1(\mathbb{R}^N;|x|^2)}(\Psi_n;\mathcal{W}_n)\right)^2\le \int_{\mathbb{R}^N} |\nabla \Psi_n-\nabla \Phi|^2\,dx+\int_{\mathbb{R}^N} |x|^2\,|\Psi_n-\Phi|^2\,dx.
\]
As above, on account of the uniform estimates at our disposal, it is sufficient to bound the last two terms for $R\le R_1$. Here, the radius $R_1=R_1(N,n)$ is the same as in the previous proof.
\par
By using the equations for both $\Phi$ and $\Psi_n$, we get
\[
\int_{\mathbb{R}^N} \langle \nabla \Phi,\nabla(\Phi-\Psi_n)\rangle\,dx+\int_{\mathbb{R}^N} |x|^2\,\Phi\,(\Phi-\Psi_n)\,dx=\lambda_n\,\int_{\mathbb{R}^N} \Phi\,(\Phi-\Psi_n)\,dx,
\]
and
\[
\int_{\mathbb{R}^N} \langle \nabla \Psi_n,\nabla(\Phi-\Psi_n)\rangle\,dx+\int_{\mathbb{R}^N} V\,\Psi_n\,(\Phi-\Psi_n)\,dx=\lambda_n(V)\,\int_{\mathbb{R}^N} \Psi_n\,(\Phi-\Psi_n)\,dx.
\]
By subtracting them, we get
\[
\begin{split}
\int_{\mathbb{R}^N} |\nabla \Psi_n-\nabla \Phi|^2\,dx+\int_{\mathbb{R}^N} |x|^2\,|\Psi_n-\Phi|^2\,dx&\le \int_{\mathbb{R}^N} \big|V-|x|^2\big|\,|\Psi_n|\,|\Psi_n-\Phi|\,dx\\
&+\big|\lambda_n(V)-\lambda_n\big|\,\int_{\mathbb{R}^N} |\Psi_n|\,|\Psi_n-\Phi|\,dx\\
&+\lambda_n\,\int_{\mathbb{R}^N} |\Psi_n-\Phi|^2\,dx\\
&\le \left(\int_{\mathbb{R}^N} \big|V-|x|^2\big|^2\,|\Psi_n|^2\,dx\right)^\frac{1}{2}\,\|\Psi_n-\Phi\|_{L^2(\mathbb{R}^N)}\\
&+\big|\lambda_n(V)-\lambda_n\big|\,\|\Psi_n-\Phi\|_{L^2(\mathbb{R}^N)}\\
&+\lambda_n\,\int_{\mathbb{R}^N} |\Psi_n-\Phi|^2\,dx.
\end{split}
\]
The $L^2$ norm of $\Psi_n-\Phi$ can be estimated by Theorem \ref{thm:stability}, while for the difference of the eigenvalues we can apply
\[
|\lambda_n(V)-\lambda_n|=\Big(\sqrt{\lambda_n}-\sqrt{\lambda_n(V)}\Big)\,\Big(\sqrt{\lambda_n}+\sqrt{\lambda_n(V)}\Big)\le 2\,R\,\sqrt{\lambda_n},
\]
which follows from Lemma \ref{lm:stimarozza}. For every $0<R\le R_1$, these yield
\[
\begin{split}
\int_{\mathbb{R}^N} |\nabla \Psi_n-\nabla \Phi|^2\,dx+\int_{\mathbb{R}^N} |x|^2\,|\Psi_n-\Phi|^2\,dx&\le C_3\,\sqrt{R_1}\,\left(\int_{\mathbb{R}^N} \big|V-|x|^2\big|^2\,|\Psi_n|^2\,dx\right)^\frac{1}{2}\\
&+2\,\sqrt{\lambda_n}\,C_3\,\sqrt{R_1}\,R+C_3^2\,\lambda_n\,R.
\end{split}
\]
We only need to estimate the integral containing the difference of the potentials. By \eqref{closepot}, we get 
\[
\begin{split}
\int_{\mathbb{R}^N} \big|V-|x|^2\big|^2\,|\Psi_n|^2\,dx&\le R^2\,\int_{\mathbb{R}^N}(|x|^2+R+1)^2\,|\Psi_n|^2\,dx\\
&\le 2\,R^2\,\left(\int_{\mathbb{R}^N}|x|^4\,|\Psi_n|^2\,dx+(R_1+1)^2\right).
\end{split}
\]
By using Proposition \ref{prop:perognik} with $k=1$ to bound uniformly the first term on the right-hand side, we conclude. 
\end{proof}

\appendix

\section{The case of $\mathbb{S}^1$ in $\mathbb{R}^3$}
\label{sec:app}

In this final part, we want to briefly comment on the case of the $\mathbb{S}^1-$type potential well \eqref{SR}, in the case $R$ becoming larger and larger. Observe that the explicit universal lower bound \eqref{roughLB} trivializes in the limit as $R$ goes to $+\infty$. We will show that this behavior is not optimal, in this peculiar case: the ground state energy must stay uniformly bounded from below, even when $R$ goes to $+\infty$.
\par
To this aim, we are going to exploit the ideas of \cite[Chapter 16]{Maz}.
\begin{prop}
Let $\Sigma\subseteq \mathbb{R}^3$ be given by \eqref{SR}. There exists a universal constant $\beta>0$ such that 
\[
\lambda_1(V)\ge \beta,\qquad \text{for every}\ R>0.
\]
\end{prop}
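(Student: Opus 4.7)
The plan is to exploit the axial symmetry of $V$ and reduce the three-dimensional problem to a family of one-dimensional radial problems, one per angular and axial frequency. In cylindrical coordinates $(r,\theta,z)$ with $r=\sqrt{x_1^2+x_2^2}$, the potential decouples as $V(x)=(r-R)^2+z^2$ and is $\theta$-independent. I would therefore expand an arbitrary $\phi\in H^1(\mathbb{R}^3;V)$ in a Fourier-Hermite basis,
\[
\phi(r,\theta,z)=\sum_{k\in\mathbb{Z}}\sum_{n\in\mathbb{N}} a_{k,n}(r)\,\frac{e^{ik\theta}}{\sqrt{2\pi}}\,h_n(z),
\]
where $\{h_n\}$ denotes the normalized Hermite eigenfunctions of $-\partial_z^2+z^2$, satisfying $\int_{\mathbb{R}}\bigl(|h_n'|^2+z^2 h_n^2\bigr)\,dz=(2n+1)\int_{\mathbb{R}}h_n^2\,dz$. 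Orthogonality of both bases makes $\mathcal{Q}_V[\phi]$ and $\|\phi\|_{L^2}^2$ decouple into contributions from each mode, yielding (up to a common factor $2\pi$)
\[
\mathcal{Q}_V[\phi]=\sum_{k,n}\int_0^\infty r\Bigl(|a_{k,n}'|^2+\bigl[\tfrac{k^2}{r^2}+(r-R)^2+(2n+1)\bigr]|a_{k,n}|^2\Bigr)dr,\qquad \|\phi\|_{L^2}^2=\sum_{k,n}\int_0^\infty r\,|a_{k,n}|^2\,dr.
\]

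Since the Rayleigh quotient is a ratio of sums of mode-wise nonnegative quantities, one has
\[
\lambda_1(V)\ge\inf_{k,n}\bigl[(2n+1)+\mu_{R,k}\bigr],\qquad \mu_{R,k}:=\inf_{a}\frac{\int_0^\infty r\bigl(|a'|^2+\tfrac{k^2}{r^2}|a|^2+(r-R)^2|a|^2\bigr)dr}{\int_0^\infty r\,|a|^2\,dr},
\]
so it suffices to prove $\mu_{R,k}\ge 0$ uniformly in $k\in\mathbb{Z}$ and $R>0$. To do so I would introduce $u(r)=\sqrt{r}\,a(r)$, which maps the weight $r\,dr$ to $dr$ and forces $u(0)=0$. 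An integration by parts absorbing the cross term gives
\[
\int_0^\infty r|a'|^2\,dr=\int_0^\infty|u'|^2\,dr-\frac{1}{4}\int_0^\infty \frac{u^2}{r^2}\,dr,
\]
so the radial functional becomes $\int_0^\infty\bigl[|u'|^2+(k^2-\tfrac{1}{4})\tfrac{u^2}{r^2}+(r-R)^2 u^2\bigr]\,dr$. For $k\ge 1$ the coefficient $k^2-1/4\ge 3/4$ is nonnegative and nonnegativity is immediate. The critical case is $k=0$, which is handled by the sharp one-dimensional Hardy inequality $\int_0^\infty|u'|^2\,dr\ge \tfrac{1}{4}\int_0^\infty u^2/r^2\,dr$ for $u(0)=0$, which exactly absorbs the singular term. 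Together with $(r-R)^2\ge 0$ this gives $\mu_{R,0}\ge 0$, whence $\lambda_1(V)\ge 1$ and one may take $\beta=1$ for every $R>0$.

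The delicate point is that Hardy's inequality is sharp and not attained, so the cancellation in the $k=0$ mode is borderline: any attempt to weaken the Hardy step (for instance by splitting the gradient into two pieces) would leave an uncontrollable negative residual. This also explains why $\beta=1$ is not sharp — the expected large-$R$ asymptotics is $\lambda_1(V)\to 2$, the ground state energy of the two-dimensional quantum harmonic oscillator on the normal disk to $S_R$, a refinement the present mode-by-mode argument cannot detect. A minor technical matter is the rigorous justification of the Fourier-Hermite decomposition of $\mathcal{Q}_V$ and of passing to the infimum mode-by-mode: this is handled by first working on the dense subspace $C^\infty_c(\mathbb{R}^3)\subseteq H^1(\mathbb{R}^3;V)$ (Lemma \ref{lm:hilbert}), where all series are smooth and convergent, and then extending by density using continuity of $\mathcal{Q}_V$.
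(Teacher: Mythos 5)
Your proof takes a genuinely different route from the paper's, and it works. The paper tiles $\mathbb{R}^3$ with unit cubes and applies a weighted Poincar\'e inequality with a capacity condition from Maz'ya (\cite[Lemma 16.1.1]{Maz}), checking that $V$ stays bounded below on a fixed fraction of each cube because $S_R$ lies in the plane $\{x_3=0\}$; the resulting lower bound involves a non-explicit universal constant $C$ from the capacity lemma and the small number $c_0 = 27/2048$. You exploit the axial symmetry instead: cylindrical coordinates give $V = (r-R)^2 + z^2$, a Fourier--Hermite expansion diagonalizes the quadratic form mode by mode, and the $z$-oscillator contributes $2n+1\ge 1$, yielding the cleaner and quantitatively much stronger bound $\lambda_1(V)\ge 1$, uniformly in $R>0$. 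The paper's argument buys generality and robustness --- it does not require separation of variables and would handle less symmetric wells contained in a hyperplane --- while yours is fully explicit and sharper, but tailored to the circular geometry.

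One remark on the radial step: the Hardy inequality is superfluous. By definition, $\mu_{R,k}$ is the infimum of $\int_0^\infty r\bigl(|a'|^2 + \tfrac{k^2}{r^2}|a|^2 + (r-R)^2|a|^2\bigr)\,dr$ divided by $\int_0^\infty r|a|^2\,dr$, and the numerator is a sum of three manifestly nonnegative integrands, so $\mu_{R,k}\ge 0$ is immediate. The substitution $u = \sqrt{r}\,a$ converts an obviously nonnegative expression into one containing the singular term $-\tfrac14\int u^2/r^2\,dr$, which you then re-absorb via Hardy --- a consistency check rather than new information. In particular, the ``delicate borderline'' you flag in the $k = 0$ mode is an artifact of the change of variable: in the original variables there is no borderline cancellation to worry about. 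The only genuine technical point is the justification of the mode-by-mode decomposition of $\mathcal{Q}_V$, which, as you note, follows by density from $C^\infty_c(\mathbb{R}^3)$.
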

\begin{proof}
In view of the lower bound from Proposition \ref{prop:poincare}, it is not restrictive to assume that $R\ge 2$.
It is sufficient to prove the following Poincar\'e inequality
\[
\beta\,\int_{\mathbb{R}^3} |\phi|^2\, dx\leq
\int_{\mathbb{R}^3} |\nabla\phi|^2\, dx+\int_{\mathbb{R}^3} V\,
|\phi|^2\, dx,\qquad \text{for every}\ \phi\in C^\infty_c(\mathbb{R}^3),
\]
with a constant $\beta>0$ not depending on $R$. We proceed as in the proof of \cite[Theorem 16.2.1]{Maz}. We first tile the whole space $\mathbb{R}^3$ with the family of cubes 
\[
Q_{\mathbf{n}}:=\mathbf{n}+\left(-\frac{1}{2},\frac{1}{2}\right)^3,\qquad \text{with}\ \mathbf{n}\in \mathbb{Z}^3.
\]
Observe that each
$Q_{\mathbf{n}}$ has edges of length $d=1$, parallel to the coordinate axes. Accordingly, for every $\phi\in C^\infty_c(\mathbb{R}^3)$ we can write
\[
\int_{\mathbb{R}^3} |\phi|^2\, dx=\sum_{\mathbf{n}\in\mathbb{Z}^3} \int_{Q_{\mathbf{n}}} |\phi|^2\, dx.
\]
On each cube $Q_{\mathbf{n}}$, we apply the following weighted Poincar\'e inequality of \cite[Lemma 16.1.1]{Maz}
\begin{equation}
\label{mazzia}
\int_{Q_{\mathbf{n}}} |\phi|^2\, dx\le \frac{C}{\lambda}\,\int_{Q_{\mathbf{n}}} |\nabla\phi|^2\, dx+\frac{C}{\displaystyle\inf_{e\in \mathscr{N}_\lambda(Q_{\mathbf{n}})} \int_{Q_{\mathbf{n}}\setminus e} V\,dx}\,\int_{Q_{\mathbf{n}}} V\,
|\phi|^2\, dx,
\end{equation}
where:
\begin{itemize}
\item $C>0$ is a universal constant;
\vskip.2cm
\item $\lambda>0$ is an arbitrary positive number;
\vskip.2cm
\item $\mathscr{N}_\lambda(Q_{\mathbf{n}})$ is the collection of all compact sets $e\subseteq \overline{Q_{\mathbf{n}}}$ such that
\[
\mathrm{cap}(e;2\,Q_{\mathbf{n}})\le \lambda,
\]
where $2\, Q_{\mathbf{n}}$ denotes the cube having the same center as $Q_{\mathbf{n}}$, dilated by a factor $2$, and 
\[
\mathrm{cap}(e;\Omega)=\inf_{\phi\in C^\infty_c(\Omega)}\left\{\int_{\Omega} |\nabla\phi|^2\,dx\, :\, \phi\ge 1 \ \text{on}\ e\right\},
\]
is the {\it capacity of a compact set $e$ relative to the open set $\Omega$} containing it.
\end{itemize}
In light of this inequality, it is sufficient to assure that we can choose $\lambda>0$ small enough such that 
\begin{equation}
\label{bestemmiefinali}
\inf_{e\in\mathscr{N}_\lambda(Q_{\mathbf{n}})}\int_{Q_{\mathbf{n}}\setminus e} V\,dx\ge c_0, 
\end{equation}
for a constant $c_0>0$, independent of $R$. By summing up \eqref{mazzia} and using the tiling property of the cubes, we will eventually get the conclusion.
\par
In order to choose $\lambda$ such that \eqref{bestemmiefinali} holds, we first observe that if $B_{\mathbf{n}}$ is the ball having the same center as $Q_{\mathbf{n}}$ and radius $\sqrt{3}$, then $2\, Q_{\mathbf{n}}\subseteq B_{\mathbf{n}}$. Thus, for every $e\in\mathscr{N}_\lambda(Q_{\mathbf{n}})$ we have
\[
\mathrm{cap}(e;B_{\mathbf{n}})\le \mathrm{cap}(e;2\,Q_{\mathbf{n}})\le \lambda.
\]
By \cite[equation (2.2.10)]{Maz} we have 
\[
\mathrm{cap}(e;B_{\mathbf{n}})\ge \left(4\,\pi\right)^\frac{2}{3}\,\sqrt[3]{3}\,\frac{|B_{\mathbf{n}}|^\frac{1}{3}\,|e|^\frac{1}{3}}{|B_{\mathbf{n}}|^\frac{1}{3}-|e|^\frac{1}{3}}.
\]
By observing that 
\[
%|B_{\mathbf{n}}|=4\,\pi\,\sqrt{3}\qquad \text{and}\qquad |e|\le |Q_{\mathbf{n}}|=1,
0<|B_{\mathbf{n}}|^\frac{1}{3}-|e|^\frac{1}{3}\le |B_{\mathbf{n}}|^\frac{1}{3},
\]
we get in particular 
\[
\lambda\ge \left(4\,\pi\right)^\frac{2}{3}\,\sqrt[3]{3}\,|e|^\frac{1}{3}=:\gamma\,|e|^\frac{1}{3}.
\]
Observe that both $\lambda$ and $\gamma$ are universal constant and do not depend on anything.
We choose $\lambda$ as follows
\[
\frac{\lambda}{\gamma}= \left(\frac{1}{4}\right)^\frac{1}{3}\qquad \text{that is}\qquad \lambda=\gamma\,\left(\frac{1}{4}\right)^\frac{1}{3}.
\]
This discussion guarantees that for such a choice of $\lambda$, we have 
\begin{equation}
\label{misura_e}
|e| \le \frac{1}{4}=\frac{|Q_{\mathbf{n}}|}{4},\qquad \text{for every}\ e\in\mathscr{N}_\lambda(Q_{\mathbf{n}}).
\end{equation}
By using this property and the fact that $V(x)=(\mathrm{dist}(x,\Sigma))^2$, with $\Sigma$ given by \eqref{SR}, we can now easily get \eqref{bestemmiefinali}.
Indeed, for every $\mathbf{n}=(n_1,n_2,n_3)\in\mathbb{Z}^3$ with $|n_3|\ge 1$, we have 
\[
\mathrm{dist}(x,\Sigma)\ge \frac{1}{2},\qquad \text{for every}\ x\in Q_{\mathbf{n}},
\]
which implies that 
\[
V(x)\ge \frac{1}{4},\qquad \mbox{ for every } x\in Q_{\mathbf{n}}.
\]
Accordingly, for every such $\mathbf{n}\in\mathbb{Z}^3$ we get
\[
\inf_{e\in\mathscr{N}_\lambda(Q_{\mathbf{n}})}\int_{Q_{\mathbf{n}}\setminus e} V\,dx\ge \frac{1}{4}\,\inf_{e\in\mathscr{N}_\lambda(Q_{\mathbf{n}})}|Q_{\mathbf{n}}\setminus e|\ge \frac{3}{16},
\]
thanks to \eqref{misura_e}.
We now take $\mathbf{n}\in\mathbb{Z}^3$ such that $n_3=0$. Accordingly, it may happen that the cube $Q_{\mathbf{n}}$ intersects the potential well $\Sigma$, where $V$ vanishes. However, for every such a cube, we observe that 
\[
V(x)\ge \frac{9}{256},\qquad \mbox{ for every } x\in Q_{\mathbf{n}}\ \text{such that}\ |x_3|\ge \frac{3}{16},
\]
thanks to the fact that $\Sigma$ lies in the hyperplane $\{x\in\mathbb{R}^3\,:\, x_3=0\}$.
In particular, we get in this case
\[
\begin{split}
\inf_{e\in\mathscr{N}_\lambda(Q_{\mathbf{n}})}\int_{Q_{\mathbf{n}}\setminus e} V\,dx&\ge \inf_{e\in\mathscr{N}_\lambda(Q_{\mathbf{n}})}\int_{\{x\in Q_{\mathbf{n}}\, :\, |x_3|\ge 3/16\}\setminus e} V\,dx\\
&\ge \frac{9}{256}\,\inf_{e\in\mathscr{N}_\lambda(Q_{\mathbf{n}})} \left|\{x\in Q_{\mathbf{n}}\, :\, |x_3|\ge 3/16\}\setminus e\right|\\
&\ge \frac{9}{256}\,\left(2\left(\frac{1}{2}-\frac{3}{16}\right)-\frac{1}{4}\right)=\frac{9}{256}\cdot \frac{3}{8},
\end{split}
\]
where we used again \eqref{misura_e}.
This finally gives the claimed fact \eqref{bestemmiefinali}, with $c_0=27/2048$.
\end{proof}

%
%\bibliography{bibfile}
%\bibliographystyle{plain}

\end{document}